\newcommand{\R}{\mathbb{R}}
\newcommand{\C}{\mathbb{C}}
\newcommand{\Z}{\mathbb{Z}}
\newcommand{\N}{\mathbb{N}}
\newcommand{\dd}{\mathrm{d}}
\renewcommand{\O}{\mathcal{O}}
\DeclareMathOperator{\supp}{supp}
\DeclareMathOperator{\tr}{tr}
\DeclareMathOperator{\dist}{dist}
\newtheoremstyle{indented}
{7pt} %vertical space before
{7pt} % vertical space after
{} %{\addtolength{\@totalleftmargin}{2.5em}
\theoremstyle{definition}
\newtheorem{defn}{Definition}[section]
\newtheorem{example}[defn]{Example}
\theoremstyle{plain}
\newtheorem*{theorem*}{Theorem}
\newtheorem{theorem}{Theorem}
\newtheorem{prop}[defn]{Proposition}
\newtheorem{lem}[defn]{Lemma}
\theoremstyle{definition}
\newtheorem{rem}[defn]{Remark} %remarks are not indented
\renewcommand*\env@matrix[1][*\c@MaxMatrixCols c]{%
  \hskip -\arraycolsep
  \let\@ifnextchar\new@ifnextchar
  \array{#1}}
\newcommand{\1}{\mathds{1}}
\renewcommand{\i}{\mathbf{i}}
\title{Central limit theorem for smooth statistics of one-dimensional free fermions}
\author{Alix
Deleporte\thanks{alix.deleporte@universite-paris-saclay.fr\\
A.D. acknowledges the support of the PEPS 2021 CNRS grant.},
Gaultier Lambert\thanks{glambert@kth.se \\ G.L. acknowledges the support of the Ambizione grant S-71114-05-01 from the Swiss National Science Foundation (University of Zurich) and of the VR starting grant 2022-04882 from the  Swedish Research Council.}}
\affil{Universit\'e Paris-Saclay, CNRS, Laboratoire de math\'ematiques d'Orsay, 91405, Orsay, France.}
\affil{KTH Royal Institute of Technology, Department of Mathematics, 11428, Stockholm, Sweden.}
\newcommand{\X}{\mathrm{X}}
\newcommand{\g}{\mathrm{g}}
\begin{document}

\maketitle

\begin{abstract}
We consider the determinantal point processes associated with the spectral
projectors of a Schrödinger operator on $\R$, with a smooth confining potential. In the semiclassical limit, where the number of particles tends to infinity, 
we obtain a Szeg\H{o}-type central limit theorem (CLT) for the
fluctuations of smooth linear statistics. More precisely, the Laplace
transform of any statistic converges without renormalization to a Gaussian limit with a $H^{1/2}$-type variance, which depends on the potential. 
In the one-well (one-cut) case, using the quantum action-angle theorem and additional micro-local tools, we reduce the problem to the asymptotics of Fredholm determinants of certain approximately Toeplitz operators. 

In the multi-cut case, we show that for generic potentials, a similar
result holds and the contributions of the different wells are independent in the limit.
\end{abstract}

\section{Introduction}
\label{sec:introduction}

\paragraph{Free fermions.}
This article is a follow-up on \cite{deleporte_universality_2021} on the semiclassical
analysis of free fermions or determinantal point processes associated with spectral projectors of Schrödinger operators, \cite{Macchi75}.
To define the model, let $V\in C^{\infty}(\R,\R)$ be a function such that $V(x)\to +\infty$ as $x\to \pm \infty$.
Then, for $\hbar\in(0,1]$, the operator
\( 
-\hbar^2\Delta+V
\)
is essentially self-adjoint with compact resolvent on $L^2(\R)$ \cite{helffer_spectral_2013}.
Consequently, given $\mu\in \R$, the orthogonal projector
\begin{equation} \label{proj}
\Pi_{\hbar}(\mu)=\1_{(-\infty,\mu]}(-\hbar^2\Delta+V)
\end{equation}
is well-defined by the spectral theorem, and has finite rank $N=N_\hbar(\mu)$.
This operator defines a determinantal point process on $\R$, denoted
by $\X : = \sum_{j=1}^N \delta_{x_j}$, which describes
  the joint spatial distribution of $N$ fermions occupying the
  low-energy states of $-\hbar^2\Delta+V$. This point process is characterized by the property that for any function $f\in L^{\infty}(\R,\C)$, the Laplace transform of the linear statistic $\X(f)$ is given by the Fredholm determinant
\begin{equation} \label{Laplace}
\mathbb{E}[e^{\X(f)}]=\det(\mathrm{I}+(e^f-1)\Pi_{\hbar}) 
\end{equation}
where the function $e^{f}-1$ is interpreted as a bounded operator on $L^2(\R)$ acting by multiplication.
We refer to the surveys \cite{Soshnikov_01,hough_determinantal_2006} and the appendix of \cite{deleporte_universality_2021} for different reviews about determinantal processes. 
Such free fermions point processes have also been extensively studied
in the physics literature,
e.g.~\cite{eisler_universality_2013,dean_universal_2015,dean_noninteracting_2019,dean_nonequilibrium_2019,dean_impurities_2021,Smith_21}
and for the harmonic oscillator in \cite{akemann_elliptic_2023}.

\smallskip

These point processes can be studied for a general smooth (confining) potential $V$ in the semiclassical limit $\hbar\to0$.
By Weyl's law, the number of particles $N\sim \frac{1}{\pi\hbar}\int (\mu-V)_+^{1/2}$ tends to infinity while the point process concentrates on a deterministic measure supported on the set $\{V\le \mu\}$, which is sometimes called \emph{the droplet}, see e.g.~\cite[Theorem I.1]{deleporte_universality_2021}. 
The main goal of \cite{deleporte_universality_2021} was to establish
that the (microscopic) scaling limits of these point processes in the
bulk and at the edge of the droplet of particles are given by the Sine, respectively the Airy, point processes from random matrix theory \cite{Tao12,dean_noninteracting_2019}. 
These results rely on the asymptotics of the kernel associated to $\Pi_{\hbar}$ and they are generalized to free fermions on $\R^n$ for any $n\ge 1$; the scaling limits are independent of the potential $V$. 
There is, however, a major difference in the behavior of linear statistics in dimension 1 versus $n\ge 2$.
Indeed, one expects that for any (non-constant) smooth function $f:\R^n\to\R$ with suitable growth, the corresponding linear statistic
satisfies $\operatorname{var} \X(f) \asymp \hbar^{1-n} $ as
$\hbar\to0$. It is an open problem to obtain exact asymptotics for these variance in dimension $n\ge 2$. We proved in \cite{deleporte_universality_2021} that if $f\in C^\infty_c(\{V<\mu\},\R)$, then $\operatorname{var} \X_\hbar(f) \le C(f) \hbar^{1-n}$, while for any (non-constant) function $f\in H^1(\R^n)$, we have
$\hbar^{2-n}\operatorname{var} \X_\hbar(f)  \to\infty $ as
$\hbar\to0$. This lower-bound is not sharp, but it does imply a CLT
for linear statistics, cf.~\cite[Theorem
I.3]{deleporte_universality_2021} because the variance tends to
infinity. In contrast, in dimension $n=1$, the variance stays bounded
and a more precise study is needed to establish a CLT, if any.

\paragraph{Main results.}
In this article, we therefore focus on one-dimensional free fermions
point processes and our goal is to show that for any test function
$f\in C^\infty_c(\R,\R)$, the fluctuations of the linear statistics
$\X(f)$ are Gaussian in the limit $\hbar\to 0$; we also determine the limit of
$\operatorname{var} \X(f)$ and how it depends on the Newtonian dynamics for the
potential $V$ at energy $\mu$. To this end, we directly analyse the determinant
\eqref{Laplace} by using the \emph{quantum action-angle theorem}
to reduce the problem to the harmonic oscillator; this
procedure is specific to the 1D case. This strategy works directly in
the \emph{one-cut} case where $\{V\le \mu\}$ is connected; we can also
treat \emph{multi-cut} situations under some generic assumptions.

\begin{theorem}[One-cut case]\label{thr:single_well}
Let $V\in C^{\infty}(\R,\R)$, with $V(x)\to +\infty$ as $x\to \pm
\infty$. Let $\mu\in\R$ be such that $\{V = \mu\} =  \{x_0^-,x_0^+\}$ with $x_0^-<x_0^+$ and $V'(x_0^\pm)\neq 0$. 
Then, for any $f\in C^{\infty}_c(\R,\R)$, uniformly for
$\eta\in\C$ sufficiently small,
\[
\log\det(\mathrm{I}+(e^{\eta f}-1)\Pi_{\hbar})=\eta\tr(f\Pi_{\hbar})
+ \frac{\eta^2}2 \Sigma_{(V,\mu)}^2(f) + \underset{\hbar\to0}{o(1)}. 
\]
This implies that in distribution, as the number of particles $N\to\infty$, the random variable
\begin{equation} \label{distcvg}
\X(f) - \mathbb{E} \X(f) \, \to\,  \Sigma_{(V,\mu)}(f)\,\mathcal{N}
\end{equation}
where $\mathcal{N}$ is a standard Gaussian. 
The limiting variance is a weighted $H^{1/2}$-seminorm, meaning that there is a map $\psi : [0,2\pi] \to [x_0^-,x_0^+]$ so that 
\begin{equation} \label{var}
\Sigma_{(V,\mu)}^2(f) = \sum_{k\in\N} k \bigg| \int_0^{2\pi} e^{\i k \theta} f(\psi(\theta)) \frac{\dd\theta}{2\pi} \bigg|^2 . 
\end{equation}
The map $\psi$ depends on the Newtonian dynamics with potential $V$ at
energy $\mu$ and it is constructed in Section~\ref{sec:notations}. (See also Section~\ref{sec:GFF} for an interpretation of \eqref{var} in terms of the Gaussian free field.)
\end{theorem}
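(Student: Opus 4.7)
The strategy is to view $\log\det(\mathrm{I}+(e^{\eta f}-1)\Pi_{\hbar})$ as an analytic function of $\eta$ near $0\in\C$ and Taylor-expand it as
\[
\log\det(\mathrm{I}+(e^{\eta f}-1)\Pi_\hbar)=\eta\,\tr(f\Pi_\hbar)+\tfrac{\eta^2}{2}T_2(\hbar)+\sum_{n\ge 3}\tfrac{\eta^n}{n!}T_n(\hbar),
\]
then show that $T_2(\hbar)\to\Sigma^2_{(V,\mu)}(f)$ and that $T_n(\hbar)=o(1)$ for every $n\ge 3$, uniformly for $\eta$ in a fixed small disk. Using $\Pi_\hbar^2=\Pi_\hbar$, standard algebra rewrites $T_2(\hbar)=\tfrac12\|[\Pi_\hbar,f]\|_{HS}^2$, and the higher $T_n$ become alternating traces of products involving $\Pi_\hbar$, $f$ and $[\Pi_\hbar,f]$. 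So the problem reduces to computing asymptotics of Hilbert--Schmidt and trace norms of such commutators.

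The geometric input is the quantum action--angle (Birkhoff) normal form. Since $\{V\le \mu\}=[x_0^-,x_0^+]$ with $V'(x_0^\pm)\ne 0$, the classical energy curve $\Gamma_\mu=\{\xi^2+V(x)=\mu\}\subset T^*\R$ is a smooth simple closed curve on which the Newtonian flow is periodic of some period $\tau(\mu)$. One builds an $\hbar$-Fourier integral operator $U_\hbar:L^2(\mathbb{S}^1)\to L^2(\R)$, unitary microlocally in a neighborhood of $\Gamma_\mu$, together with a classical symbol $\mathcal{F}(\cdot;\hbar)$ such that
\[
U_\hbar^*\,(-\hbar^2\Delta+V)\,U_\hbar=\mathcal{F}(\hbar D_\theta;\hbar)
\]
modulo a smoothing operator. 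Consequently, $U_\hbar^*\Pi_\hbar U_\hbar$ coincides, up to an $O(\hbar^\infty)$ error, with the Fourier multiplier $P_N=\1_{\mathcal{F}(\hbar D_\theta;\hbar)\le\mu}$, i.e.~with the projector onto a finite range of Fourier modes, of size $N\asymp\hbar^{-1}$, determined by the Bohr--Sommerfeld quantization of the action.

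Egorov's theorem then identifies $U_\hbar^*\,f(x)\,U_\hbar$ with an $\hbar$-pseudodifferential operator on $\mathbb{S}^1$ whose principal symbol is $f\circ\kappa(\theta,I)$, where $\kappa$ is the symplectomorphism realizing the classical action--angle change of variables. Because the commutator $[P_N,\cdot]$ is only sensitive to the symbol on the Fermi surface $I=I(\mu)$, this reduces, modulo errors that contribute only to the linear-in-$\eta$ term, to the multiplication operator on $L^2(\mathbb{S}^1)$ by the function $g(\theta):=f(\psi(\theta))$, with $\psi$ the angle parametrization of $\Gamma_\mu$ from Section~\ref{sec:notations}. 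After these reductions the Fredholm determinant becomes, up to $o(1)$, the Toeplitz-type determinant $\det(\mathrm{I}+P_N(e^{\eta g}-1)P_N)$ on $L^2(\mathbb{S}^1)$.

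For a smooth symbol $g$, the strong Szeg\H{o} limit theorem applied to this Toeplitz determinant yields
\[
\log\det(\mathrm{I}+P_N(e^{\eta g}-1)P_N)=\eta N\int_0^{2\pi} g\,\tfrac{\dd\theta}{2\pi}+\tfrac{\eta^2}{2}\sum_{k\ge 1}k\,\bigl|\widehat{g}(k)\bigr|^2+o(1),
\]
which, after identifying the linear term with $\eta\,\tr(f\Pi_\hbar)$, reproduces \eqref{var}; the CLT \eqref{distcvg} then follows from convergence of Laplace transforms on a neighborhood of $0\in\C$. The main obstacle is the careful book-keeping of all the remainders: the action--angle conjugation is only microlocally unitary near $\Gamma_\mu$, so one must prove that Fourier modes far from the Fermi level contribute only to $\tr(f\Pi_\hbar)$, and one must control trace-class and Hilbert--Schmidt norms of the smoothing and sub-principal errors uniformly in $\hbar$ to every order of the $\eta$-expansion. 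The smoothness and compact support of $f$, together with the support-shrinking techniques already developed in \cite{deleporte_universality_2021}, are the core technical tools for these estimates.
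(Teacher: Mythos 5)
Your overall architecture (action--angle conjugation, reduction to a Szeg\H{o}-type determinant, identification of the second cumulant with $\tfrac12\|[\Pi_\hbar,f]\|_{J^2}^2$, and convergence of Laplace transforms) matches the paper's, and the variance computation you sketch is essentially the paper's Lemma~\ref{prop:FM}. But there is a genuine gap at the central step. After conjugating by the action--angle FIO, the operator $U_\hbar^* e^{\eta f} U_\hbar$ is an $\hbar$-pseudodifferential operator whose \emph{principal} symbol $e^{\eta f}\circ\kappa^{-1}(\theta,I)$ depends on the action variable $I$, not only on $\theta$. Its matrix in the Fourier (or Hermite) basis is therefore only \emph{approximately} Toeplitz: the entries behave like $\hat a_{k-j}\bigl(\tfrac{j+k}{2}\hbar\bigr)$, with a genuine dependence on $j+k$. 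Replacing this operator by multiplication by $g(\theta)=f(\psi(\theta))$ (i.e.\ freezing $I=I(\mu)$) is not a sub-principal or microlocal-support error: the difference is $O(1)$ in operator norm on the relevant microlocal neighbourhood. Consequently you cannot simply quote the classical strong Szeg\H{o} theorem for $\det(\mathrm{I}+P_N(e^{\eta g}-1)P_N)$. Justifying that the action-dependence can be frozen at the Fermi level \emph{inside the functional} $\log\det(\mathrm{I}+A\Pi)-\tr(\log(\mathrm{I}+A)\Pi)$ is the main technical content of the paper: it requires the off-diagonal decay and Lipschitz-in-$I$ control of the matrix elements (Propositions~\ref{prop:limit_Toep_Bargmann} and~\ref{prop:limit_Toep}), the quantitative perturbation bounds for the cumulants in terms of $\|[\Pi,A^{k}R]\|_{J^2}$ (Lemma~\ref{lem:Fredh}), and then a from-scratch Dyson--Hunt--Kac combinatorial argument (Section~\ref{sec:end-proof}) rather than an appeal to the Toeplitz literature. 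Your plan to instead show directly that each $\eta$-cumulant $T_n(\hbar)$, $n\ge 3$, is $o(1)$ faces the same obstruction plus the known delicate cancellations in Soshnikov's cumulant method; none of this is supplied.

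Two smaller points. First, conjugating to a Fourier multiplier on $L^2(\mathbb{S}^1)$ forces you to track the Maslov index (the Bohr--Sommerfeld condition for a potential well carries a half-integer shift, $I(\lambda_n)\approx(n+\tfrac12)\hbar$, which the circle model does not reproduce); the paper deliberately conjugates to the harmonic oscillator precisely to avoid this index bookkeeping, and Proposition~\ref{prop:angle-action-applied} still needs a deformation argument to rule out an index shift. Second, your remark that certain errors ``contribute only to the linear-in-$\eta$ term'' needs care: the theorem asserts the linear term is \emph{exactly} $\eta\tr(f\Pi_\hbar)$ (which follows from $\log e^{\eta f}=\eta f$), so errors cannot be absorbed there; they must be shown to be $o(1)$ in the renormalised determinant.
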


By a standard truncation, a version of Theorem~\ref{thr:single_well}
holds for continuous test functions with at most exponential growth
and which are $C^{\infty}$ in a neighborhood of the droplet $\{V\le
\mu\}$. In this case, the distributional limit \eqref{distcvg} still
holds and all centred moments of the linear statistic $\X(f)$ also
converge; we clarify this in Proposition~\ref{prop:clt}.
Formula \eqref{var} can be recovered from the asymptotics of the covariance kernel of the counting function $\R\ni x\mapsto \X(\1_{(-\infty,x]})$ of the fermions point process. These asymptotics are well-known for the harmonic oscillator (see \cite{CFLW21} for a thorough study of the GUE counting function) and have been derived using physical arguments in \cite{Smith_21}, for general one-cut potential, based on WKB asymptotics of the Schr\"odinger operator eigenfunctions.  
We will report on the relationship with \cite{Smith_21} and the Gaussian free field interpretation of Theorem~\ref{thr:single_well} in the Appendix~\ref{sec:GFF}.

\begin{example} \label{ex:gue}
For the \emph{quantum harmonic oscillator}, $V(x) = x^2$ on $\R$ and $\mu=1$, the Schr\"odinger operator $-\hbar^2\Delta+V$ is  diagonalized by the Hermite functions with respect to the weight $x\mapsto e^{-x^2/\hbar}$. 
Then, the free fermions point process corresponds to the eigenvalues
of the Gaussian Unitary Ensemble (GUE) and the equilibrium measure is the
Wigner semi-circle law $\rho(x) =
\frac{2}{\pi}\sqrt{1-x^2}$. Moreover, according to
Section~\ref{sec:notations}, one has $\psi(\theta) = \cos\theta$ for $\theta\in[0,2\pi]$, so that 
$\Sigma^2(f) = \sum_{k\in\N} k |\hat{f}_k|^2$ where $(\hat{f}_k)_{k\in\N_0}$ denotes the Fourier-Chebyshev coefficients of $f$. 
In this case, Theorem~\ref{thr:single_well} corresponds to
\cite[Thm~2.4]{Johansson_98} with $\beta=2$; see also ~\cite{Tao12}, Chap.~3. Note that in this limit, one can also replace the mean $\mathbb{E} \X(f)$ by $N \int f \dd\rho$ up to a vanishing error.
\end{example}

The results of \cite{Johansson_98} hold for general $\beta$-ensembles
(or 1d log-gases) in the one-cut case and the strategy there relies on the so-called loop equation method. 
In contrast to Theorem~\ref{thr:single_well}, the asymptotic variance for one-cut $\beta$-matrix models is universal in the sense that it depends on the potential $V$ only via the support of the equilibrium measure.
We refer to \cite{lambert_quantitative_2019,bekerman_clt_2018,bourgade_optimal_2022} and the last version of \cite{borot_asymptotic_2013-2} for further recent developments on the loop equation method for $\beta$-ensembles.

It is also of interest to compare Theorem~\ref{thr:single_well} to the (strong) \emph{Szeg\H{o} limit theorem} which arises in a slightly different geometric context, considering free fermions on a  one-dimensional torus (CUE). 

\begin{example} \label{ex:cue}
Consider the determinantal process associated with the operator $
\1_{(-\infty,1]}(-\hbar^2\Delta)$ where $\Delta$ is the Laplacian on
$\mathbb{T} = [0,2\pi]$ with periodic boundary conditions so that the
number of particles is $N = 2\hbar^{-1}+1$. 
The eigenfunctions of this operator are just Fourier modes and this point process corresponds to the eigenvalues of the classical \emph{circular unitary ensemble} (CUE) from random matrix theory.
In particular, one can rewrite \eqref{Laplace} as
\[
\mathbb{E}[e^{\X(f)}]=\det(A^{(N)}) 
\]
where $A$ is a Toeplitz matrix, $\big(A^{(N)}_{ij} = \widehat{g}_{i-j} \big)_{i,j \in [N]}$, and $\big(\widehat{g}_{k} \big)_{k\in\Z}$ denotes the usual Fourier-coefficient of  the function $g=e^{f}-1$. 
The asymptotics of such \emph{Toeplitz determinants} are a classical subject in analysis and are known as Szeg\H{o} limit theorems. 
In particular, if $f\in H^{1/2}(\mathbb{T},\C)$ (that is if $\sum_{k\in\Z} \sqrt{1+k^2} |\widehat{f}_{k}|^2 <\infty$), then 
\[
\log\det(A^{(N)}) =  N \widehat{f}_{0} + \frac12 \sum_{k\in\Z} |k| \widehat{f}_k  \widehat{f}_{-k} + \underset{N\to\infty}{o(1)}. 
\]
This result is discussed in details in \cite[Chap.6]{simon_orthogonal_2005} with several different proofs, including the combinatorial approach developed by Kac \cite{Kac54} and Soshnikov \cite{Soshnikov_00} that we follow in Section~\ref{sec:end-proof}. 
We also refer to the survey \cite{deift_toeplitz_2013} for further
proofs and applications of the strong Szeg\H{o} limit theorem and to
\cite{charlier_asymptotics_2021,lambert_strong_2023} for the relationship with the GUE eigenvalue fluctuations and log-correlated Gaussian fields.

\end{example}

In the \emph{multi-cut} case, that is when the droplet $\{V\leq \mu\}$
has several
disconnected components, we can also formulate a CLT under an
additional genericity condition on the potential. This genericity
condition ensures that the different cuts are non-resonant (their eigenvalues are sufficiently far away from each other),
so that eigenfunctions are all localised in exaclty one of the cuts. This genericity
holds along any discrete set of values of $\hbar$ accumulating at $0$ with
at least polynomial speed. Such subsequences are 
relevant in the context of quantization on a compact phase
space (where $\hbar_m\asymp 1/m$); they are also relevant since the
number of particles is expected to grow linearly (since $\hbar N$ is
roughly constant by the Weyl law).

\begin{theorem}[Multi-cut case] \label{thr:multiple_wells}
Let $V\in C^{\infty}(\R,\R)$ with $V(x)\to +\infty$ as $x\to \pm
\infty$, and assume that $\mu\in\R$ satisfies, for some $\ell\ge 2$, 
\[
\{V \le \mu\}= \bigcup_{j=1}^\ell I_j
\]
with disjoint (non-empty) intervals $I_j$.  
Let $(\chi_k)_{1\leq k\leq
\ell}$ be a family of $C^\infty_c(\R,\R_+)$ functions with disjoint supports such that $\chi_k=1$ on $I_k$ and fix a small $\epsilon>0$.  
Let  $(\hbar_m)_{m\in \N}$ be a sequence of semiclassical parameters
such that for some $\alpha>0$, $\hbar_m \leq m^{-\alpha}$ if $m$ is large enough, and define
\[
\Pi_{m}=\1_{(-\infty,\mu]}\bigg(-\hbar^2_m\Delta+V+\sum_{j=1}^{\ell} w_j\chi_j\bigg), \qquad w\in [-\epsilon,\epsilon]^\ell.
\]
Then, for almost every $w\in [-\epsilon,\epsilon]^\ell$, for any
$f\in C^{\infty}_c(\R,\R)$, it holds  uniformly for $\eta\in\C$ sufficiently small, 
\[
\log\det(I+(e^{\eta f}-1)\Pi_{m})=\eta\tr(f\Pi_{m})+\frac{\eta^2}{2}\sum_{j=1}^{\ell} \Sigma_{(W_j,\mu)}^2(f) + \underset{m\to\infty}{o(1)} 
\]
where $W_j = V+w_j$ on a neighbourhood of $I_j$, is strictly larger
than $\mu$ outside, and $\Sigma_{(W_j,\mu)}^2$ is as in Theorem~\ref{thr:single_well}. 
\end{theorem}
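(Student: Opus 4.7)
The plan is to reduce to the one-cut Theorem~\ref{thr:single_well} via three ingredients: first, for generic $w$, the eigenvalues of the full operator $H_{w,m}=-\hbar_m^2\Delta+V+\sum_j w_j\chi_j$ coming from different wells are well separated; second, under this non-resonance, semiclassical tunneling estimates imply that each eigenfunction below $\mu$ is exponentially localized in a single well, so $\Pi_m$ is exponentially close in trace norm to a direct sum $\sum_j \widetilde\Pi_m^{(j)}$ of single-well projectors $\widetilde\Pi_m^{(j)}=\1_{(-\infty,\mu]}(\widetilde H_w^{(j)})$ with $\widetilde H_w^{(j)}=-\hbar_m^2\Delta+W_j$; third, the Fredholm determinant factorizes into a product of single-well determinants, each handled by Theorem~\ref{thr:single_well}.

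For the genericity step, let $(\lambda_a^{(j)}(w_j))_a$ denote the eigenvalues of $\widetilde H_w^{(j)}$ below $\mu+\delta_0$ for some fixed $\delta_0>0$. Since $\chi_j=1$ on $I_j$ and the corresponding eigenfunctions are localized there by Agmon estimates, Hellmann--Feynman gives $\partial_{w_j}\lambda_a^{(j)}(w_j)=1+O(e^{-c/\hbar_m})$, hence bounded away from zero. For fixed $w_k$ with $k\neq j$, the Lebesgue measure of $w_j\in[-\epsilon,\epsilon]$ such that $|\lambda_a^{(j)}(w_j)-\lambda_b^{(k)}(w_k)|<\delta$ is thus $O(\delta)$, and a union bound over the $O(\hbar_m^{-2})$ relevant pairs $(a,b)$ yields a ``bad'' set $B_m$ of measure $O(\hbar_m^{-2}\delta)$. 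Choosing $\delta=\delta_m=e^{-c_0/\hbar_m}$ for a constant $c_0$ strictly smaller than the minimal Agmon tunneling exponent between wells, the polynomial decay $\hbar_m\leq m^{-\alpha}$ ensures $\sum_m |B_m|<\infty$, and Borel--Cantelli gives that almost every $w$ satisfies the non-resonance gap $|\lambda_a^{(j)}-\lambda_b^{(k)}|\geq \delta_m$ for all but finitely many $m$.

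For the localization step, Agmon estimates show that eigenfunctions of $\widetilde H_w^{(j)}$ below $\mu$ decay like $e^{-d_j(x)/\hbar_m}$ outside $I_j$, where $d_j$ is the Agmon distance associated to $(W_j-\mu)_+$. Comparing the decoupled operator $\bigoplus_j \widetilde H_w^{(j)}$ with $H_{w,m}$, the hopping between wells produces matrix elements of order $e^{-c_1/\hbar_m}$ with $c_1>c_0$; combined with the non-resonance gap $\delta_m$, standard perturbation theory for close pairs of spectra yields an orthogonal decomposition $\Pi_m=\sum_j \Pi_m^{(j)}+R_m$ where each $\Pi_m^{(j)}$ has the same rank as $\widetilde\Pi_m^{(j)}$ and $\|\Pi_m^{(j)}-\widetilde\Pi_m^{(j)}\|_1+\|R_m\|_1$ is exponentially small. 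Moreover, since $(e^{\eta f}-1)$ is a bounded multiplication operator and the ranges of the $\widetilde\Pi_m^{(j)}$ are effectively supported in the disjoint sets $I_j$, the off-diagonal blocks $\widetilde\Pi_m^{(j)}(e^{\eta f}-1)\widetilde\Pi_m^{(k)}$ with $j\neq k$ also have exponentially small trace norm.

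The approximate block-diagonal structure then gives the multiplicative identity
\[
\log\det(I+(e^{\eta f}-1)\Pi_m) = \sum_{j=1}^\ell \log\det(I+(e^{\eta f}-1)\widetilde\Pi_m^{(j)}) + o(1),
\]
and applying Theorem~\ref{thr:single_well} to each single-well factor (the extended potentials $W_j$ satisfy the one-cut hypotheses by construction), together with $\tr(f\Pi_m)=\sum_j \tr(f\widetilde\Pi_m^{(j)})+o(1)$, completes the proof. The main obstacle is the decoupling step: quantitatively controlling the tunneling-induced perturbation of eigenprojectors when the non-resonance gap $\delta_m$ itself shrinks exponentially with $\hbar_m$, which requires an interaction-matrix/Agmon argument uniform over the (generic) set of admissible $w$.
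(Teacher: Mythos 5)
Your proposal follows essentially the same route as the paper: the Helffer--Sj\"ostrand single-well decomposition with quasimode/Agmon localisation, a non-resonance condition obtained for generic $w$ by a measure estimate plus Borel--Cantelli (exploiting that $w_j$ shifts the $j$-th well's eigenvalues essentially rigidly, which is your Hellmann--Feynman step and the paper's observation that $\lambda_{k;j}\mapsto\lambda_{k;j}+w_j$), and finally an approximate block-diagonal factorisation of the Fredholm determinant feeding into the one-cut theorem. One concrete omission: your bad set $B_m$ only excludes inter-well resonances $|\lambda_a^{(j)}-\lambda_b^{(k)}|<\delta_m$, but you must also exclude the $w$ for which some single-well eigenvalue lies within $\delta_m$ of the Fermi level $\mu$ itself (condition 2 of the paper's Proposition~\ref{prop:separation_evs}). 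Without this, an eigenvalue of the decoupled operator sitting just below $\mu$ can be pushed across the threshold by the exponentially small tunneling correction, so that $\operatorname{rank}\Pi_m\neq\sum_j\operatorname{rank}\widetilde\Pi_m^{(j)}$ and the difference of projectors acquires a rank-one piece of operator norm $1$ --- the claimed trace-norm closeness then fails. The repair is immediate (the same monotonicity in $w_j$ bounds the measure of this additional bad set by $O(\hbar_m^{-1}\delta_m)$), but it must be included in the Borel--Cantelli argument.
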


\medskip

This behaviour is in sharp contrast with that of multi-cut random matrix models. 
Indeed, it was pointed out by Pastur \cite{Pastur_06} that because
orthonormal polynomials are delocalized on every cut in the
semi-classical regime, the numbers of eigenvalues in the different
cuts of the equilibrium measure has non-trivial order 1 fluctuations
for large $N$.  Then, for real-analytic potentials, it was established
in \cite{shcherbina_fluctuations_2013} and \cite[Theorem
1.6]{borot_asymptotic_2013-2} that these fluctuations are given by an
explicite multivariate discrete Gaussian law. This phenomena entails that
the fluctuations of a generic linear statistic are also uniformly
bounded in $N$, but they are generically non-Gaussian. 
For unitary-invariant Hermitian random matrices ($\beta=2$), this
problem was revisited in \cite{charlier_asymptotics_2021} using
Riemann-Hilbert asymptotics for orthogonal polynomials in the
multi-cut regime. In particular, the connection with the Gaussian free
field on a Riemann surface is discussed in detail in
\cite[Section~1.5]{charlier_asymptotics_2021}. In our situation, the
eigenfunctions of $-\hbar^2\Delta+V+\sum w_j\chi_j$ are, generically,
localised on exactly one of the intervals $I_j$, which we use to prove
that the fluctuations
are Gaussian.

It is not clear what happens in the resonant case, such as for the
symmetric double well potential $V(x)=x^4-x^2$ at energies
$\mu<0$. From the results of \cite{helffer_puits_1985}, eigenvalues
appear in exponentially close pairs and if $\mu$ is not inbetween
them (so that $N$ is even), then again the projector exponentially close to the sum of the
two one-well projectors. In contrast, if $N$ is odd, we do not know
whether a central limit theorem should hold.

\paragraph{Proof strategy and additional comments.} 
Our proof of Theorem~\ref{thr:single_well} relies crucially on the \emph{complete integrability} of one-dimensional Schrödinger operators. 
Namely, under the assumptions of Theorem~\ref{thr:single_well}, the eigenfunctions of the operator $-\hbar^2\Delta+V$ with eigenvalues in a neighborhood of $\mu$ have all order asymptotic expansions. This is usually performed using Wentzel--Kramers--Brillouin (WKB) approximations in quantum mechanics \cite{colin_de_verdiere_spectre_1980}. 
Rather than using directly these approximations, our method exploits
the \emph{quantum action-angle theorem} (Proposition~\ref{prop:aa}) to
reduce (by an approximate conjugation of the Fredholm determinant
\eqref{Laplace}) the problem to the case of the harmonic oscillator
(Example~\ref{ex:gue}), at the price of working with a general
\emph{pseudo-differential operator} instead of a multiplication
operator.
In this context, we obtain Szeg\H{o}-type asymptotics (Proposition~\ref{prop:Szegoasymp}) for certain Fredholm determinants of  pseudodifferential operators.

Our proof involves the fact that pseudodifferential operators have an \emph{approximate Toeplitz}
structure in the eigenbasis of the harmonic oscillator. 
This reduces the problem to Szeg\H{o}-type asymptotics for which one can apply classical methods such as the cumulant method and the Dyson-Hunt-Kac (DHK) combinatorial Lemma, see \cite{Kac54} and  \cite[Chap.~6.5]{simon_orthogonal_2005}.
This method has already been successfully used in another generalisation of the strong Szeg\H{o} limit theorem using microlocal tools \cite{guillemin_spectral_1997}. 

When using the DHK combinatorial approach, there is no need for a
symmetry hypothesis (either the self-adjointness used in \cite{Kac54}
or the time-reversibility used in \cite{guillemin_spectral_1997}) to
obtain a strong Szeg\H{o} theorem; we clarify this by reviewing this
approach.

Szeg\H{o}-type limit theorems also hold for general one-cut unitary-invariant Hermitian random matrix ensembles, we refer to \cite{breuer_central_2017,lambert_clt_2018} for different approaches to Szeg\H{o}-type asymptotics using the connection with orthogonal polynomials. In particular, different forms of the DHK formula are discussed in \cite[App~A]{lambert_clt_2018}. 

The action-angle strategy requires the non-degeneracy ``Airy-type
edge'' generic condition $\nabla V\neq 0$ on $\{V=\mu\}$. It would
be interesting to consider the case of hyperbolic critical points (such as
appearing in Landau-type potentials), where
the asymptotic behaviour of eigenfunctions is also known \cite{colin_de_verdiere_equilibre_1994}.

The proof of Theorem~\ref{thr:multiple_wells} relies on well-known
localisation techniques (see for instance \cite{helffer_multiple_1984,helffer_multiple_1985}) to show that, generically (in the sense of Theorem \ref{thr:multiple_wells}), the projector $\Pi_{\hbar}$ can be approximated by a commuting sum of projectors which correspond to the potential localized on each well. 
Hence, in this case, we can reduce the asymptotics of linear statistics to the case of Theorem~\ref{thr:single_well}.

\paragraph{Organisation of the paper.}
The proof of Theorem~\ref{thr:single_well} (and its generalisation
to pseudo-differential operators, Proposition~\ref{prop:Szegoasymp}) is given in Section~\ref{sec:one-well-case} and is organized as follows.
In Sections~\ref{sec:notations} and~\ref{sec:semiclassical-tools} we review the classical action-angle theorem for certain one-dimensional Hamilton flow and its quantum analogue for Schr\"odinger operators. 
This also gives us the opportunity to introduce several important
notation and basic results from semiclassical analysis.

In Section~\ref{sec:Toeplitz}, we establish that as a consequence of the \emph{quantum angle-action theorem} and the asymptotic properties of the Hermite functions, a certain class of pseudo-differential operators have an approximately Toeplitz structure (as $\hbar\to0$) in the eigenbasis of $-\hbar^2\Delta+V$. 
In Section~\ref{sec:strong-szegho-limit} we gather some results concerning the Taylor expansion of Fredholm determinants and its continuity property with respect to perturbations. Finally, in Section~\ref{sec:end-proof}, we combine these results to complete the proof of Theorem~\ref{thr:single_well}.
The core of the argument relies on the approximately Toeplitz property of pseudo-differential operators and a combinatorial approach to the Strong Szeg\H{o} theorem known as the Dyson-Kac-Hunt (DHK) formula. 

The proof of Theorem~\ref{thr:multiple_wells} is given in Section~ \ref{sec:several-wells}. It relies on Theorem~\ref{thr:single_well}, localisation techniques to isolate the contribution from each well and the genericity condition to guarantee that there are no \emph{resonances} between the wells. 

\section{One-cut case -- Proof of Theorem~\ref{thr:single_well}}
\label{sec:one-well-case}

Throughout this section,  we assume that the potential  $V\in C^{\infty}(\R,\R)$ is such that $V\to +\infty$ when
$|x|\to +\infty$ and we fix $\mu\in \R$ such that $\{V\leq \lambda\} =
[x_0^-(\lambda),x_0^+(\lambda)]$ with $x_0^-(\lambda)<x_0^+(\lambda)$ for $\lambda$ in a
neighborhood of $\mu$. Furthermore, we assume a non-degeneracy
condition $V'(x_0^{\pm}(\lambda))\neq 0$ for every $\lambda$ in this
neighbourhood of $\mu$. Under these hypotheses,  the maps $\lambda
\mapsto x_0^\pm(\lambda)$ are $C^{\infty}$-diffeomorphisms. Later on
we will assume more technical assumptions about the behaviour of $V$
near infinity, but they can be lifted (see \cite[Section 2.2]{deleporte_universality_2021}).

\subsection{Classical action-angle theorem}
\label{sec:notations}

Let $(\phi_t)_{t\in\R}$ denotes the Hamilton flow of $\mathcal{H}
:(x,\xi)\mapsto \xi^2+V(x)$, that is, for every $(x_0,\xi_0)\in \R^2$,
the function $\R^2 \ni(x_0,\xi_0) \mapsto
(x_t,\xi_t)=\phi_t(x_0,\xi_0)\in \R^2$ solves the evolution equation
\begin{equation} \label{Hflow}
\begin{cases}
\partial_t x_t  = 2 \xi_t \\
\partial_t \xi_t  =  - V'(x_t).
\end{cases}
\end{equation}

A remarkable property of one-dimensional Hamiltonian dynamics is
\emph{complete integrability}. The dynamics preserve the energy
$\mathcal{H} $, and the flow is periodic along the curves
\begin{equation} \label{scurve}
\mathcal{C}_{\lambda} :=\big\{(x,\xi)\in \R^2, \mathcal{H}(x,\xi)=\lambda \big\} ,
\end{equation}
with (shortest) period $T(\lambda)$. 
By assumptions, these curves are simply connected and $T(\lambda)<\infty$ for $\lambda$ in a neighbourhood of $\mu$.

The flow $\phi_t$ is, on a neighbourhood of $\mathcal{C}_{\mu}$,
conjugated to the Hamiltonian flow of a function of the harmonic
oscillator; letting
\begin{equation} \label{g}
\g(\lambda):=\frac{1}{2\pi}{\rm Area}\big(\{(x,\xi)\in
\R^2,\mathcal{H}(x,\xi)\leq\lambda\}\big),
\end{equation}
then the formula
\[
\kappa:(\phi_t(x_0^+(\lambda),0))\mapsto
\sqrt{2\g(\lambda)}\Big (\cos\tfrac{2\pi t}{T(\lambda)},\, -\sin\tfrac{2\pi t}{g(\lambda)} \Big)
\]
defines a $C^{\infty}$-diffeomorphism from a $\phi_t$-invariant
neighbourhood of $\mathcal{C}_{\mu}$ to an annular neighbourhood of
$\{x^2+\xi^2=2\g(\mu)\}$. In fact, $\kappa$ is area-preserving and
$\g\circ \mathcal{H}=\frac{|\kappa|^2}{2}$.

Given $a\in C^{\infty}(\R^2,\R)$ and $I$ in a neighbourhood of $\g(\mu)$,
we can now define the Fourier coefficients of $a$ along the Hamilton
flow as
\begin{equation}\label{Fouriera}
\hat{a}_j(I):=\frac{1}{T(\lambda)}\int_0^{T(\lambda)}e^{-\frac{2i\pi jt}{T(\lambda)}}a(\phi_t(x_0^+(\lambda),0))\dd  t  \qquad \qquad \lambda=\g^{-1}(I) , \quad k\in\Z. 
\end{equation}
An equivalent definition is
\begin{equation} \label{Fourierb}
\hat{a}_j(I):=\frac{1}{2\pi}\int_0^{2\pi}a\circ\kappa^{-1}(\sqrt{2I}\cos(\theta),\sqrt{2I}\sin(\theta))e^{ij\theta}\dd\theta, \qquad  k\in\Z.
\end{equation}

Then, in the context of Theorem~\ref{thr:single_well}, the asymptotic variance $\Sigma$ is defined by, for any  $f\in C^{\infty}_c(\R,\R)$, 
\begin{equation}\label{eq:Sigma}
\Sigma_{(V,\mu)}^2(f) := \sum_{k\in\N} k \big|\hat{f}_k(
\g(\mu))\big|^2 ,
\end{equation}
where we view $f$ as a (smooth) function on $\R^2$ independent of the second
variable. This amounts to the $H^{\frac 12}$ seminorm of the periodic
function $f\circ \psi$, where
\[
\psi(\theta)=\pi_1\circ\phi_{\frac{\theta T(\mu)}{2\pi}}(x_0^+(\mu),0) , \qquad \theta\in[0,2\pi]
\]
and $\pi_1:\R^2\to \R$ is the projection on the first coordinate (position). 

\subsection{Semiclassical tools}
\label{sec:semiclassical-tools}

Here we collect some useful estimates for the eigenfunctions of
one-dimensional Schr\"odinger operators under the hypotheses above. 
Recall the definition of the projection $\Pi_\hbar$, \eqref{proj}, and
the spectral curve $\mathcal{C}_\mu$, \eqref{scurve}.

We will use several techniques from semiclassical analysis, notably a
quantum version of the considerations of Subsection
\ref{sec:notations}. We begin with a quick overview of
pseudodifferential operators and their application to the spectral
theory of Schrödinger operators, referring to
\cite{dimassi_spectral_1999,zworski_semiclassical_2012} for more detailed introductions.

\begin{defn}\label{def:pseudos}
Let $a\in \mathcal{S}(\R^2,\C)$ and $\hbar>0$. The Weyl quantization
of $a$ is the operator with kernel
\[
Op_{\hbar}(a)(x,y)=\frac{1}{2\pi\hbar}\int
e^{i\frac{\xi(x-y)}{\hbar}}a(\tfrac{x+y}{2},\xi)\dd \xi.
\]
This is a trace class operator on $L^2(\R)$, whose trace norm is controlled by a
finite number of seminorms of $a$ in $\mathcal{S}$. Moreover, its
operator norm satisfies the G\aa rding inequality
(\cite{zworski_semiclassical_2012}, Theorem 4.32)
\begin{equation} \label{Garding}
\|Op_{\hbar}(a)\|_{L^2\to L^2}=\|a\|_{L^{\infty}}+\O(\hbar).
\end{equation}
\end{defn}
More general function classes are well-adapted to Weyl quantization,
such as the symbol class $S^0(\R^2,\C)$; an element of $S^0$ is a function $a\in C^{\infty}(\R^2,\C)$ such that
\[
\forall (j,k)\in\N^2,\, \exists C_{j,k}\in\R_+,\quad
\sup_{(x,\xi)\in \R^2}  |\partial^j_x\partial^k_{\xi}a(x,\xi)|\leq C_{j,k}.
\]
Elements of $S^0$ also lead to bounded operators satisfying \eqref{Garding}.

\smallskip

In this framework, we will prove the following generalization of Theorem~\ref{thr:single_well}.

\begin{prop}[Szeg\H{o} asymptotics]\label{prop:Szegoasymp}
Let $\mu>0$ and $\Pi_{\hbar}= \Pi_{\hbar}(\mu)$ be given by \eqref{proj} under the assumptions of Theorem~\ref{thr:single_well}. 
Let $a_\hbar \in S^0(\R^2,\C)$ be a symbol with $\|a_\hbar\|_{L^\infty} \le c <1$. Then, 
\[
\log\det(\mathrm{I}+Op_{\hbar}(a)\Pi_{\hbar})
= \tr(\log(1+Op_{\hbar}(a))\Pi_\hbar)
+ \frac{1}2  \sum_{\ell\in\Z} |\ell| \widehat{f}_\ell  \widehat{f}_{-\ell}+ \underset{\hbar\to0}{o(1)} 
\]
where $f = -\log(1+a_0) =  \sum_{r\ge 1} \tfrac{(-1)^r}{r} a_0^r$ converges to $f\in C(\R^2,\C)$ since $\|a_0\|_{L^\infty} \le c$ and the Fourier coefficients $(\widehat{f}_\ell )_{\ell\in\Z}$ are given by formula \eqref{Fouriera} with $I =\g(\mu)$. 
\end{prop}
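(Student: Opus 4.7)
The plan is to reduce the Fredholm determinant to a Toeplitz-type computation in the Hermite basis and then apply the Dyson-Hunt-Kac (DHK) combinatorial formula. The first step is to invoke the quantum action-angle theorem from Section~\ref{sec:semiclassical-tools}: I would conjugate, modulo $o(1)$ errors, the pair $(\Pi_\hbar, Op_\hbar(a))$ by a semiclassical Fourier integral operator $U_\hbar$ quantizing the symplectomorphism $\kappa$ of Section~\ref{sec:notations}. Under this conjugation, $\Pi_\hbar$ becomes a spectral projector $\tilde\Pi_N$ of the harmonic oscillator $-\hbar^2\Delta + x^2$ onto its first $N = N_\hbar(\mu)$ eigenstates, and $Op_\hbar(a)$ becomes a pseudodifferential operator $Op_\hbar(b)$ whose principal symbol satisfies $b_0 = a_0\circ\kappa^{-1}$ on the circle $\{x^2+\xi^2 = 2\g(\mu)\}$. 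Cyclicity of the Fredholm determinant then reduces the statement to a Szeg\H{o}-type asymptotic for $\log\det(I+Op_\hbar(b)\tilde\Pi_N)$ in the Hermite basis.

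Next, I would use the Plemelj-Smithies expansion
\[
\log\det(I+Op_\hbar(b)\tilde\Pi_N) = \sum_{r\ge 1}\frac{(-1)^{r-1}}{r}\tr\bigl((Op_\hbar(b)\tilde\Pi_N)^r\bigr),
\]
which converges because $\|Op_\hbar(b)\|_{L^2\to L^2} \le c + O(\hbar) < 1$ by G\aa rding. The key input from Section~\ref{sec:Toeplitz} is the approximate Toeplitz property: in the Hermite basis, $\langle h_n, Op_\hbar(b) h_m\rangle \approx \hat{b}_{n-m}\bigl(\hbar(n+m+1)/2\bigr)$ up to lower-order corrections. Expanding each trace as a cyclic sum over indices $n_1,\ldots,n_r\in\{0,\ldots,N-1\}$ and freezing the radial action at $\g(\mu)$ for indices near the Fermi level, I would invoke the DHK identity: the cyclic sum decomposes into a volume term $N\widehat{(b^r)}_0$ plus an edge defect. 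Summing over $r$ with the signs from the logarithm, the volume terms reconstitute $\tr(\log(I+Op_\hbar(b))\tilde\Pi_N)$, which (undoing the conjugation by $U_\hbar$) equals $\tr(\log(I+Op_\hbar(a))\Pi_\hbar) + o(1)$, while the cumulated defects yield exactly $\tfrac12\sum_{\ell\in\Z}|\ell|\hat{f}_\ell\hat{f}_{-\ell}$ with $f = -\log(1+a_0)$; the identification of Fourier coefficients through $\kappa$ is the symplectic invariance already encoded in \eqref{Fourierb}.

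The main obstacle I anticipate is controlling the errors uniformly in $r$. A naive pointwise error of order $\hbar$ per matrix element accumulates to $O(r\hbar)$ in an $r$-fold product, which is useless when merely summed against the $1/r$ from the logarithm. The saving must come from combining the spectral bound $\|Op_\hbar(b)\|<1$, yielding a geometric factor $c^r$ in the trace, with the smoothness of $b$, yielding rapid decay of $\hat{b}_k$ in the shift $k$; together these concentrate the effective sum on configurations of bounded total shift and on indices near $N$, where the Toeplitz approximation is sharp. A secondary difficulty is the edge region around index $N$, where WKB asymptotics for Hermite functions degenerate and the Toeplitz approximation breaks down: this window must be shown to contribute only $o(1)$ via pointwise bounds on Hermite functions near the spectral edge. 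Finally, the sub-principal symbolic corrections to $b$ from the conjugation by $U_\hbar$ are $O(\hbar)$ in operator norm and contribute only to the $o(1)$ remainder, since the Szeg\H{o} quadratic term depends continuously on $b$ in an appropriate topology.
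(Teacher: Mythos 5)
Your overall architecture (action--angle conjugation to the harmonic oscillator, approximate Toeplitz structure in the Hermite basis, DHK combinatorics) matches the paper's, but the way you organize the determinant expansion contains a genuine gap. You expand $\log\det(I+Op_\hbar(b)\tilde\Pi_N)=\sum_r\frac{(-1)^{r-1}}{r}\tr\bigl((Op_\hbar(b)\tilde\Pi_N)^r\bigr)$, split each trace into a ``volume term'' plus an ``edge defect'', and then resum the volume terms into $\tr(\log(I+Op_\hbar(b))\tilde\Pi_N)$ up to $o(1)$. Each trace $\tr\bigl((Op_\hbar(b)\tilde\Pi_N)^r\bigr)$ is of size $O(N)=O(\hbar^{-1})$, so this resummation requires relative precision $o(\hbar)$ on each diagonal contribution. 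The approximate Toeplitz formula only gives $\langle h_j,Op_\hbar(b)h_j\rangle=\widehat{(b_0)}_0(\hbar j)+O(\hbar)$ per entry, and the subprincipal corrections to $b$ coming from $U_\hbar$ are likewise $O(\hbar)$ per entry; summed over $N\sim\hbar^{-1}$ diagonal indices these produce $O(1)$ errors in the volume term, not $o(1)$. Your closing claim that the subprincipal corrections ``contribute only to the $o(1)$ remainder'' is therefore false for the volume term (it is correct only for the defect, which is localized near the Fermi level). Note also that the volume term is a Riemann sum $\sum_{j<N}\widehat{(b_0^r)}_0(\hbar j)$ over varying actions, not $N\widehat{(b^r)}_0$ at a single action, so even its principal part cannot be frozen at $\g(\mu)$.

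The missing idea is to subtract the trace of the logarithm \emph{before} expanding: the paper works with $\Upsilon_\Pi(A)=\log\det(1+A\Pi)-\tr(\log(1+A)\Pi)=\sum_{n\ge2}\tfrac{(-1)^n}{n}\bigl(\tr(\Pi A^n\Pi)-\tr((\Pi A\Pi)^n)\bigr)$, in which all volume contributions cancel identically. Each difference is bounded by $\tfrac{n(n-1)}{4}\|[\Pi,A]\|_{J^2}^2\rho^{n-2}$ (Lemma~\ref{lem:Fredh}), hence uniformly $O(1)$ and summable by the commutator bound of Lemma~\ref{prop:FM}, and in the index representation it is supported on closed paths crossing the Fermi level, so that only the principal symbol near $I=\g(\mu)$ enters and the action can legitimately be frozen there before applying DHK. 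This organization also avoids conjugating the Fredholm determinant itself by the merely microlocally unitary $U_\hbar$: the paper first microlocalizes and truncates to a finite-rank operator (Propositions~\ref{prop:cutoff} and~\ref{prop:Fredh}) and only then transfers individual matrix elements to the Hermite basis (Proposition~\ref{prop:angle-action-applied}). Without this reorganization, your plan as written cannot reach $o(1)$ precision.
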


Observe that the function  $a:(x,\xi)\mapsto e^{\eta f(x)}-1$ belongs
to $S^0(\R^2,\C)$ if $f\in C^{\infty}_c(\R,\R)$ with
$\|a\|_{L^{\infty}}<1$ if the parameter $\eta\in\C$ is small enough;
moreover it acts on $u\in L^2(\R)$ by multiplication: $Op_{\hbar}(a)u:x\mapsto e^{\eta f(x)}u(x)$.
Then, by Proposition~\ref{prop:Szegoasymp} (rescaling $f$ to $\eta f$), we conclude that as $\hbar\to0$, 
\[
\log \det(1+a\Pi_\hbar) = \eta\tr(f\Pi_{\hbar}) +   \frac{\eta^2}2 \sum_{\ell\in\Z} |\ell| \widehat{f}_\ell  \widehat{f}_{-\ell} + o(1) .
\]
Hence, by definition of the variance $\Sigma_{(V,\mu)}^2$, \eqref{eq:Sigma},  this proves Theorem~\ref{thr:single_well}  in the one-cut case. 

\smallskip

Pseudodifferential operators are well-adapted to the study of
Schrödinger operators under some conditions on $V$ at
infinity\footnote{By elementary (Agmon-type) techniques, changing $V$
away from $\{V\leq \mu\}$ has negligible impact on the quantities
studied here; see \cite[Section 2.2]{deleporte_universality_2021} for
details.}. In the rest of this article, we will always assume that there exists $m>0$ such that
\[
\forall k\in \N, \exists C_k, \forall x\in \R,|\partial^kV(x)|\leq C_k(1+|x|)^{m}
\]
and
\[
\exists c_0, R >0 , \, \forall |x|>R,\, |V(x)|\geq c_0(1+|x|)^m.
\]
Under these conditions, not only is the operator $-\hbar^2\Delta+V$ bounded
from below with compact resolvent, but smooth compactly supported
spectral functions of $-\hbar^2\Delta+V$ are given by
pseudodifferential operators with compactly supported symbol
(\cite{zworski_semiclassical_2012}, Theorem 14.9).
Moreover, the next Lemma implies that we can also assume that the symbol $a_\hbar$ in Proposition~\ref{prop:Szegoasymp} is supported on a small neighborhood of the curve~$\mathcal{C}_\mu$,~\eqref{scurve}. 
\begin{lem}[Micorlocalisation] \label{lem:microloc}
Let $(\phi_j)_{j\in \N_0}$ be a Hilbert basis of eigenfunctions of $-\hbar^2\Delta+V$ with eigenvalues $(\lambda_j)_{j\in\N_0}$.
For every $a\in S^0(\R^2,\C)$ such that $\supp(a)\cap \mathcal{C}_\mu = \emptyset$, there exists a (small) $\delta>0$ so that for every $j\in\N_0$, 
\[
Op_{\hbar}(a) \phi_j = \O_{L^2}(\hbar^{\infty}) \qquad\text{uniformly for } |\lambda_j -\mu| \le \delta.
\]
Then, as $\hbar\to0$, 
\[
\big\| [\Pi_{\hbar},Op_{\hbar}(a)]\big\|_{J^1}=\O(\hbar^{\infty}).
\]
\end{lem}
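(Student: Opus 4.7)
The plan is to derive the pointwise eigenfunction estimate by a standard semiclassical microlocalisation argument, and then to deduce the commutator bound by splitting $\Pi_\hbar$ into a smooth functional-calculus piece plus a low-rank edge correction which can be controlled by the first estimate together with Weyl's law.

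For the first assertion, I would fix $\delta>0$ small enough that the compact tube $\mathcal{H}^{-1}([\mu-2\delta,\mu+2\delta])$ is disjoint from $\supp(a)$; this is possible because $\mathcal{C}_\mu$ and $\supp(a)$ are disjoint closed sets in $\R^2$, $\mathcal{C}_\mu$ is compact (the one-cut assumption and properness of $\mathcal{H}$), and the tube collapses onto $\mathcal{C}_\mu$ as $\delta\to0$. Pick $\chi\in C^\infty_c(\R)$ equal to $1$ on $[\mu-\delta,\mu+\delta]$ and supported in $[\mu-2\delta,\mu+2\delta]$, so that $\chi(-\hbar^2\Delta+V)\phi_j=\phi_j$ whenever $|\lambda_j-\mu|\le\delta$. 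By the Helffer--Sj\"ostrand functional calculus, as in \cite[Thm~14.9]{zworski_semiclassical_2012}, $\chi(-\hbar^2\Delta+V)=Op_\hbar(b_\hbar)$ for a symbol $b_\hbar\in S^0(\R^2,\R)$ whose full asymptotic expansion is supported in $\mathcal{H}^{-1}(\supp\chi)$. Because the two supports are disjoint, every term of the Moyal product $a\# b_\hbar$ vanishes identically and the remainder is $\O(\hbar^\infty)$ in the Schwartz seminorms, so that $Op_\hbar(a)\chi(-\hbar^2\Delta+V)=\O(\hbar^\infty)$ in operator norm by Calder\'on--Vaillancourt. Evaluating on $\phi_j$ yields $\|Op_\hbar(a)\phi_j\|_{L^2}=\O(\hbar^\infty)$ uniformly in $j$ with $|\lambda_j-\mu|\le\delta$.

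For the commutator estimate, I would write $\1_{(-\infty,\mu]}=\eta+g$ where $\eta\in C^\infty(\R)$ coincides with $\1_{(-\infty,\mu]}$ outside a $\delta/2$-neighbourhood of $\mu$, and $g$ is bounded and supported in $[\mu-\delta/2,\mu+\delta/2]$. Then $\Pi_\hbar=\eta(-\hbar^2\Delta+V)+R$, where $R=g(-\hbar^2\Delta+V)$ has rank equal to the number of eigenvalues of $-\hbar^2\Delta+V$ in $[\mu-\delta/2,\mu+\delta/2]$, which is $\O(\hbar^{-1})$ by Weyl's law. The smooth piece $\eta(-\hbar^2\Delta+V)$ has pseudodifferential symbol with asymptotic expansion $\eta(\mathcal{H})+\hbar(\cdot)+\cdots$ whose derivatives of any order $\ge1$ are supported in the strip $\mathcal{H}^{-1}([\mu-\delta/2,\mu+\delta/2])$, since $\eta$ is locally constant outside this strip. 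Hence every order of the Moyal expansion of $[\eta(-\hbar^2\Delta+V),Op_\hbar(a)]$ vanishes and the full commutator is $\O(\hbar^\infty)$ in trace norm by the standard pseudodifferential remainder bounds. For the edge contribution, I would expand $[R,Op_\hbar(a)]=\sum_{j}g(\lambda_j)\bigl(|\phi_j\rangle\langle\phi_j|Op_\hbar(a)-Op_\hbar(a)|\phi_j\rangle\langle\phi_j|\bigr)$, bound each rank-one trace norm by $\|Op_\hbar(\bar a)\phi_j\|+\|Op_\hbar(a)\phi_j\|$, and invoke the first step (applied to both $a$ and $\bar a$, which share the same support condition) to obtain a total trace norm of order $\hbar^{-1}\cdot\hbar^\infty=\hbar^\infty$.

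The main obstacle is the bookkeeping around the Fermi level: one must absorb the polynomially many eigenfunctions in the edge strip using the infinite-order smallness supplied by microlocalisation, and one must verify that the pseudodifferential symbol of the smooth extension $\eta(-\hbar^2\Delta+V)$ really has all its symbolic derivatives confined to a strip disjoint from $\supp(a)$, so that every order of the Moyal bracket annihilates. Once these ingredients are in place, the rest is routine pseudodifferential calculus coupled with Weyl's law.
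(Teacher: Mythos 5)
Your proposal is correct and follows essentially the same route as the paper: the eigenfunction estimate is the standard disjoint-support microlocalisation argument (the paper simply cites \cite[Theorem 6.4]{zworski_semiclassical_2012}), and the commutator bound uses the same decomposition of $\Pi_{\hbar}$ into a smooth functional-calculus part (killed because the symbol of $\eta(-\hbar^2\Delta+V)$ is locally constant on $\supp(a)$, so every term of the Moyal bracket vanishes) plus an $\O(\hbar^{-1})$-rank edge strip absorbed by the first estimate. The only cosmetic difference is that you control the edge contribution by a rank-one expansion directly in $J^1$, whereas the paper estimates its Hilbert--Schmidt norm and converts to trace norm via the rank bound.
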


\begin{proof}
The first claim is exactly \cite[Theorem 6.4]{zworski_semiclassical_2012}. To obtain the second claim, we decompose 
\begin{equation} \label{decomp}
\Pi_{\hbar} =\1_{(-\infty,\mu]}(-\hbar^2\Delta+V)
=  \vartheta(-\hbar^2\Delta+V)  + \chi \1_{(-\infty,\mu]}(-\hbar^2\Delta+V) 
\end{equation}
where $\chi\in C^\infty_c(\R,[0,1])$  is supported on a neighborhood
of $\mu$, and $\vartheta\in C^\infty_c(\R,[0,1])$ is
supported on $\{\xi^2+V(x)<\mu\}$. By the functional calculus
(\cite{zworski_semiclassical_2012}, Theorem 14.9), one has
\[
\vartheta(-\hbar^2\Delta+V) = Op_{\hbar}(b) + \O_{J^1}(\hbar^\infty)
\]
where $b_\hbar\in \mathcal{S}$ satisfies $b_\hbar=b_{\hbar}^2$ on  $\supp(a)$.
Then, using  the composition formula for Weyl quantization (e.g.~\cite{zworski_semiclassical_2012}, Theorem 4.11 and 4.12), 
\[
[Op_{\hbar}(b), Op_{\hbar}(a)] = Op_{\hbar}(b\#a-a\#b) 
\qquad\text{where}\quad
b\#a = a\#b + \O_{\mathscr{S}}(\hbar^\infty) . 
\]
Consequently, by \cite[Theorem 4.21]{zworski_semiclassical_2012}, $\|
Op_{\hbar}(b\#a-a\#b)  \|_{L^2\to L^2} =  \O(\hbar^\infty) $. Using
the Weyl-type bound
${\rm rank}(\vartheta(-\hbar^2\Delta+V))=\O(\hbar^{-1})$, we finally obtain
\begin{equation} \label{est1}
\| [ \vartheta(-\hbar^2\Delta+V),  Op_{\hbar}(a)] \|_{J^1} =  \O(\hbar^\infty)  .
\end{equation}
Moreover, we also have 
\[
\| \chi(-\hbar^2\Delta+V)Op_{\hbar}(a) \|_{J^2}
=  \sum_{j\in\N_0} \chi(\lambda_j)^2 \big\| Op_{\hbar}(a) \phi_j \big\|^2  
\]
where every term  is $\O(\hbar^{\infty})$ (by the first claim as $\chi$ is supported in a $\delta$-neighborhood of $\mu$). Since there are $\O(\hbar^{-1})$ non-zero terms, this shows that 
\begin{equation} \label{est2}
\| \chi(-\hbar^2\Delta+V)Op_{\hbar}(a) \|_{J^2} = \O(\hbar^{\infty})  .
\end{equation}
Since both operators on the RHS of \eqref{decomp} have rank $\O(\hbar^{-1})$, the claim follows by combining the estimates \eqref{est1} and \eqref{est2}.
\end{proof}

Like the classical Hamiltonian dynamics in two-dimensional phase space, the quantum evolution problem and the eigenvalue problem are \emph{integrable} for one-dimensional pseudo-differential operators. This basic
structure has been thoroughly exploited, notably through 
WKB expansions for eigenfunctions, and it lies at the foundation of quantum mechanics. The modern formulation
and generalisation of this structure is the
existence of a unitary conjugation of a Schrödinger operator into a function of the harmonic oscillator. This
conjugation is \emph{semi-global}, that is, holds in the vicinity of a given
energy level $\mu$, which is either regular (no critical point) or elliptic (critical points are
Morse and the Hessian has positive determinant).

\begin{prop}[Quantum action-angle theorem] \label{prop:aa}
Recall that $\g$ is the area map given by \eqref{g}. 
There exists a bounded operator $U_{\hbar}$ on $L^2(\R)$ such that for every $\chi \in C^{\infty}_c(\R,\R)$ supported in a small neighborhood of $\mu$,  
\[
U_{\hbar} \chi(H_{\hbar})U_{\hbar}^*= \vartheta_\hbar\big(-\tfrac{\hbar^2}{2}\Delta+\tfrac{x^2}{2}\big)+\O_{J^1}(\hbar^{\infty}),
\]
where the symbol $\vartheta_\hbar \in C^{\infty}_c(\R,\R)$ admits a formal
expansion in powers of $\hbar$, with principal part $\vartheta_0 = \chi \circ
\g^{-1}$, and every terms is supported  in a small neighbourhood
of the support of $\vartheta_0$.
Moreover, $U_\hbar$ is microlocally unitary near $C_{\mu}$:
\begin{itemize}
\item for every $a\in C^{\infty}_c(\R^2,\R)$ supported in a small neighborhood of the curve $\mathcal{C}_\mu$,
\[
Op_{\hbar}(a)U_{\hbar}^*U_{\hbar}=Op_{\hbar}(a)+\O_{J^1}(\hbar^{\infty}) .
\]
\item for every  $b\in C^{\infty}_c(\R^2,\R)$ in a small neighborhood of the circle $\kappa(\mathcal{C}_\mu)$,
\[
Op_{\hbar}(b)U_{\hbar}U_{\hbar}^*=Op_{\hbar}(b)+\O_{J^1}(\hbar^{\infty}).
\]
\item  for every $a\in C^{\infty}_c(\R^2,\R)$ supported in a small neighborhood of the curve $\mathcal{C}_\mu$, there exists $b_{\hbar}$ with compact support and a formal expansion
in powers of $\hbar$ with first term
\(
b_0=a\circ \kappa^{-1}
\)
such that
\[
U_\hbar Op_{\hbar}(a)U_\hbar^*=Op_{\hbar}(b)+\O_{J^1}(\hbar^{\infty}).
\]
\end{itemize}
\end{prop}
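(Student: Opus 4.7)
The plan is to obtain $U_\hbar$ as the composition of a semiclassical Fourier integral operator (FIO) $U_\hbar^{(0)}$ quantising the classical symplectomorphism $\kappa$ of Section~\ref{sec:notations}, followed by an iterative quantum Birkhoff correction that kills the subprincipal errors. For the first step, since $\kappa$ is an exact symplectomorphism from an annular neighborhood $\Omega$ of $\mathcal{C}_\mu$ onto an annular neighborhood $\kappa(\Omega)$ of the circle $\{x^2+\xi^2=2\g(\mu)\}$, one can construct an FIO $U_\hbar^{(0)}:L^2(\R)\to L^2(\R)$ associated to the graph of $\kappa^{-1}$ by a WKB ansatz (see \cite[Chap.~11]{zworski_semiclassical_2012}); working in local generating-function charts covering the annulus and gluing, one can moreover arrange that $U_\hbar^{(0)}$ is microlocally unitary near $\mathcal{C}_\mu$ and $\kappa(\mathcal{C}_\mu)$ (the global ambiguity reducing to a Bohr--Sommerfeld phase, which does not affect conjugation). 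Writing $\chi(H_\hbar)=Op_\hbar(a_\chi)+\O_{J^1}(\hbar^\infty)$ via the functional calculus \cite[Thm~14.9]{zworski_semiclassical_2012}, Egorov's theorem then gives
\[
U_\hbar^{(0)}\chi(H_\hbar)(U_\hbar^{(0)})^{*}=Op_\hbar\Big((\chi\circ\g^{-1})\big(\tfrac{x^{2}+\xi^{2}}{2}\big)+\hbar \tilde r_{1}+\hbar^{2}\tilde r_{2}+\cdots\Big)+\O_{J^1}(\hbar^{\infty})
\]
microlocally near $\kappa(\mathcal{C}_\mu)$, using the defining identity $\mathcal{H}\circ\kappa^{-1}=\g^{-1}\big(\tfrac{x^{2}+\xi^{2}}{2}\big)$ so that the principal symbol is already rotationally invariant.

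To eliminate the correction $\hbar\tilde r_1$, I would conjugate by $\exp(\i\hbar \,Op_\hbar(s_{1}))$ with $s_{1}\in C^{\infty}_{c}$ supported near $\kappa(\mathcal{C}_\mu)$. The conjugation produces an extra term $\{\tfrac{x^{2}+\xi^{2}}{2},s_{1}\}$, so $s_{1}$ should solve the cohomological equation $\{\tfrac{x^{2}+\xi^{2}}{2},s_{1}\}=\tilde r_{1}-\langle\tilde r_{1}\rangle$, where $\langle\cdot\rangle$ denotes the average along the rotation flow. This equation is always solvable in an annular neighborhood of the circle: decomposing $\tilde r_{1}$ in Fourier modes along the angular coordinate, the zero mode is $\langle \tilde r_1\rangle$ and each nonzero mode is divided by $\i\ell$. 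The residual $\langle \tilde r_{1}\rangle$ depends only on $\tfrac{x^{2}+\xi^{2}}{2}$ and is absorbed into the harmonic-oscillator symbol as $\hbar \vartheta_{1}$. Iterating at each order $\hbar^{k}$ and Borel-summing the $s_{k}$ into a single $s\in C^{\infty}_{c}$ and the residuals into $\vartheta_{\hbar}=\chi\circ\g^{-1}+\sum_{k\geq 1}\hbar^{k}\vartheta_{k}$, one obtains the normal form modulo $\O_{J^1}(\hbar^{\infty})$. Setting $U_{\hbar}:=\exp(\i\,Op_\hbar(s))\,U_\hbar^{(0)}$ completes the construction, and the three microlocal conjugation identities then follow from the standard Egorov calculus applied to $U_\hbar$, the two unitarity statements being preserved by the conjugations $\exp(\i\hbar^{k} Op_\hbar(s_k))$ since these are exactly unitary.

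The main obstacle is bookkeeping rather than deep. First, one must ensure that at every step the symbols $s_{k}$ and residuals $\vartheta_{k}$ remain compactly supported in small fixed neighborhoods of $\kappa(\mathcal{C}_\mu)$ and of $\{x^{2}+\xi^{2}=2\g(\mu)\}$ respectively, so that the Borel summation stays in the relevant symbol class. Second, one needs to upgrade the $L^{2}$-operator-norm estimates coming from the symbol calculus to trace-class ($J^{1}$) estimates; this uses the fact that all objects are microlocalised in a bounded region of phase space where the relevant spectral projectors have rank $\O(\hbar^{-1})$ (by Weyl's law), so that $\O(\hbar^{k})$ operator-norm errors translate into $\O(\hbar^{k-1})$ trace-norm errors after composition with a cutoff like $\chi(H_\hbar)$; taking the Borel sum with enough orders kills this loss. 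Once these two points are organised, the proposition reduces to repeated application of Egorov's theorem and of the symbol calculus.
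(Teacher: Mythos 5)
The paper does not actually prove Proposition~\ref{prop:aa}: it is invoked as a known result, with the statement attributed to Th\'eor\`eme~4.1.8 of \cite{vu_ngoc_systemes_2006} (see also \cite{vu_ngoc_formes_2000,colin_de_verdiere_spectre_1980}). Your sketch is essentially a reconstruction of the standard proof from that literature --- FIO quantising $\kappa$, Egorov, then a quantum Birkhoff/averaging iteration solving cohomological equations along the periodic rotation flow --- so it is the ``right'' argument, and the overall architecture (solvability of $\{\tfrac{x^2+\xi^2}{2},s_k\}=\tilde r_k-\langle\tilde r_k\rangle$ by Fourier decomposition in the angle, absorption of the averages into $\vartheta_\hbar$, Borel summation, upgrade from operator-norm to $J^1$ errors via the $\O(\hbar^{-1})$ rank bound) is sound.

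Two points deserve more care than you give them. First, a bookkeeping slip: conjugating by $\exp(\i\hbar\,Op_\hbar(s_1))$ perturbs the symbol at order $\hbar^{2}$, since $\i\hbar[Op_\hbar(s_1),Op_\hbar(p_0)]=\hbar^{2}Op_\hbar(\{s_1,p_0\})+\O(\hbar^{4})$; to cancel the order-$\hbar$ term you must conjugate by $\exp(\i\,Op_\hbar(s_1))$ (equivalently $\exp(\tfrac{\i}{\hbar}Op_\hbar(\hbar s_1))$), and similarly $\exp(\i\hbar^{k-1}Op_\hbar(s_k))$ at step $k$. This is trivially repaired but as written the iteration does not close. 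Second, and more substantively, the phrase ``the global ambiguity reducing to a Bohr--Sommerfeld phase'' hides the one genuinely global input: gluing the local generating-function quantisations of $\kappa$ over the non-simply-connected annulus requires the holonomy of the phase --- the difference of action integrals $\oint\xi\,\dd x$ around $\mathcal{C}_\lambda$ and its image circle, plus the Maslov contribution --- to vanish. Here it does, because $\kappa$ is constructed precisely so that both curves bound area $2\pi\g(\lambda)$ and both have Maslov index $2$; this is exactly the ``absence of eigenvalue shifts (vanishing of the Maslov index)'' the paper emphasises after the statement and exploits again, via a deformation argument, in the proof of Proposition~\ref{prop:angle-action-applied}. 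You should make this verification explicit rather than dismiss it, since it is what distinguishes the harmonic-oscillator model from the $-\i\hbar\partial_\theta$ model. Finally, your normal form produces a Weyl operator with radial symbol; identifying it with $\vartheta_\hbar\bigl(-\tfrac{\hbar^2}{2}\Delta+\tfrac{x^2}{2}\bigr)$ for an actual function $\vartheta_\hbar$ requires one more (standard) application of the functional calculus, matching diagonal entries in the Hermite basis at the points $(n+\tfrac12)\hbar$ and interpolating.
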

A good presentation (in French) of the quantum action-angle theorem near regular
trajectories can be found in \cite{vu_ngoc_systemes_2006}; the statement above
corresponds to Théorème 4.1.8 (see also \cite{vu_ngoc_formes_2000,colin_de_verdiere_spectre_1980}). There are several versions of this theorem;
usually one conjugates the operator into a function of
$-i\hbar\partial/\partial \theta$ acting on $L^2(S^1)$, but in our
context, it is simpler to conjugate to the harmonic oscillator,
because of the absence of eigenvalue shifts (vanishing of the Maslov
index), as will come into play below.

We begin with a description of the model case.
\begin{prop}[\cite{zworski_semiclassical_2012}, Chap.~6.1]\label{prop:Hermite}
The operator $H_\hbar =-\tfrac{\hbar^2}{2}\Delta+\tfrac{x^2}{2}$ is
the (quantum) harmonic oscillator. Its eigenfunctions $(\psi_j)_{j\in
\N_0}$ are the (semi-classical) Hermite functions. In particular, we have $H_\hbar  = A_\hbar^*A_\hbar + \hbar/2$ where $A_\hbar^* =  \tfrac{1}{\sqrt{-2}}(\hbar \partial_x + x)$ is the creation operator. 
We have $\psi_0(x) \propto e^{-x^2/2\hbar} $ for $x\in\R$ and $\psi_j \propto A_\hbar^{*j} \psi_0 $ for $j\in\N$ so that $H_\hbar \psi_j = (j+1/2) \hbar \psi_j$ for $j\in \N_0$. 
\end{prop}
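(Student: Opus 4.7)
The plan is to carry out the standard ladder-operator derivation of the spectrum of the quantum harmonic oscillator, which is purely algebraic once the factorization is established.

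First, I would verify by a direct computation the factorization $H_\hbar = A_\hbar^* A_\hbar + \hbar/2$. Using $(\partial_x)^* = -\partial_x$ on the Schwartz space, one checks that $A_\hbar$ and $A_\hbar^*$ are mutual adjoints, and then expands the product $A_\hbar^* A_\hbar$, collecting terms via the canonical commutation $[\hbar\partial_x, x] = \hbar$. The same kind of expansion gives $[A_\hbar, A_\hbar^*] = \hbar$, from which
\[
[H_\hbar, A_\hbar^*] = [A_\hbar^*A_\hbar, A_\hbar^*] = A_\hbar^*[A_\hbar, A_\hbar^*] = \hbar A_\hbar^*.
\]
This is the crucial raising relation.

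Next, I would identify the ground state: the first-order ODE $A_\hbar \psi_0 = 0$ reads $(\hbar \partial_x + x)\psi_0 = 0$ (up to the global constant in $A_\hbar^*$), whose unique $L^2$-solution up to a scalar is $\psi_0 \propto e^{-x^2/(2\hbar)}$. By the factorization, $H_\hbar \psi_0 = (\hbar/2)\psi_0$. Defining $\psi_j := (j!\,\hbar^j)^{-1/2} A_\hbar^{*j} \psi_0$ for $j\geq 1$, the raising relation yields by induction $H_\hbar \psi_j = (j+1/2)\hbar\,\psi_j$, and the norm identity $\|A_\hbar^*\phi\|^2 = \langle \phi, A_\hbar A_\hbar^* \phi\rangle = \langle \phi, (A_\hbar^* A_\hbar + \hbar)\phi\rangle$ gives $\|\psi_j\| = \|\psi_0\|$, which I would normalize to $1$. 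Orthogonality between eigenspaces is automatic since $H_\hbar$ is self-adjoint.

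Finally, I would argue completeness: since $V(x) = x^2/2 \to \infty$, $H_\hbar$ has compact resolvent and hence purely discrete spectrum. The positivity $A_\hbar^*A_\hbar \geq 0$ gives $H_\hbar \geq \hbar/2$. If $H_\hbar \psi = E\psi$ with $E \notin \{(j+1/2)\hbar\}$, iterating the lowering relation $[H_\hbar, A_\hbar] = -\hbar A_\hbar$ produces eigenvectors at $E - k\hbar$ for arbitrarily large $k$, contradicting $H_\hbar \geq \hbar/2$; at some stage $A_\hbar^{k}\psi$ must therefore lie in $\ker A_\hbar = \mathbb{C}\psi_0$, forcing $E = (k+1/2)\hbar$ and $\psi \in \mathbb{C}\psi_k$. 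Thus $\{\psi_j\}_{j\in\N_0}$ exhausts the spectrum and forms an orthonormal basis of $L^2(\R)$.

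The main obstacle is essentially bookkeeping — fixing the constant in $A_\hbar^*$ so that the factorization produces the stated $+\hbar/2$ shift, and verifying the normalization $\|\psi_j\| = 1$ by induction. The only non-algebraic step, completeness, reduces to the standard compact-resolvent argument and the inability to lower the spectrum indefinitely, so no serious analytic difficulty arises.
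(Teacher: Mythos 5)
Your proposal is correct and is exactly the standard ladder-operator argument; the paper does not prove this proposition but cites it from \cite{zworski_semiclassical_2012}, Chap.~6.1, where the same factorization/commutation derivation is carried out. One small point in your favour: as printed, the paper's formula makes $A_\hbar^*\propto(\hbar\partial_x+x)$, which would annihilate $\psi_0$ rather than raise it, so your reading that the \emph{annihilation} operator is the one proportional to $(\hbar\partial_x+x)$ (with $A_\hbar^*\propto(x-\hbar\partial_x)$, yielding $H_\hbar=A_\hbar^*A_\hbar+\hbar/2$) is the correct resolution of that typo.
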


\begin{rem}\label{rk:eigsep}
Let $\Omega$ be a small (but independent of $\hbar$) neighbourhood
of $\mu$.
Propositions \ref{prop:aa} and \ref{prop:Hermite} imply that for any $\chi\in C^{\infty}_c(\Omega,\R)$, the spectrum of $\chi(H_{\hbar})$  lies
$\O(\hbar^{\infty})$-close to $\{\vartheta_{\hbar}((n+\tfrac
12)\hbar),n\in \N\}$, see for instance \cite[Theorem 8.20]{helffer_spectral_2013}.
This implies that the eigenvalues of $H_{\hbar}$ in $\Omega$
are simple and separated from each other by about $\hbar$ (one can see this by
considering the spectra of $\chi(H_{\hbar})$ for two different
functions $\chi$).
\end{rem}

In our context, using the conventions from Proposition~\ref{prop:Hermite}, the relevant consequence of Proposition \ref{prop:aa} is the following result.

\begin{prop}\label{prop:angle-action-applied} 
There exists a Hilbert basis $(\phi_j)_{j\in \N}$ of eigenfunctions of $-\hbar^2\Delta+V$ such that the following holds.
For every $a\in C^{\infty}_c(\R^2,\R)$ supported in a small neighborhood of the curve $\mathcal{C}_\mu$, there exists a symbol\footnote{with formal expansion in powers
of $\hbar$ and supported in a small neighborhood of $a\circ \kappa^{-1}$} $b_{\hbar}\in C^{\infty}_c(\R^2,\R)$ such that
\(
b_\hbar=a\circ \kappa^{-1}+\O(\hbar),
\)
and, uniformly for $j, k\in \N_0$, 
\begin{equation}\label{eq:aa-applied}
\langle \phi_j,Op_{\hbar}(a)\phi_k\rangle=\langle
\psi_{j},Op_{\hbar}(b)\psi_{k}\rangle + \O(\hbar^{\infty}) . 
\end{equation}

In particular, ${\rm rank}(\1_{(-\infty,\mu]}(-\hbar^2\Delta+V))=\hbar^{-1}\g(\mu)+\O(1)$.
\end{prop}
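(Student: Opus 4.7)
The plan is to pull back the Hermite basis via the action--angle conjugation $U_\hbar$ from Proposition~\ref{prop:aa} and verify that the resulting functions coincide, up to $\O_{L^2}(\hbar^\infty)$, with a genuine eigenbasis of $H_V := -\hbar^2\Delta+V$. Fix $\chi \in C^\infty_c(\R,[0,1])$ equal to $1$ on a neighborhood $\Omega_0$ of $\mu$ and supported in a slightly larger $\Omega$ where the identity $U_\hbar\chi(H_V)U_\hbar^* = \vartheta_\hbar(H_{osc}) + \O_{J^1}(\hbar^\infty)$ holds, with $H_{osc} = -\tfrac{\hbar^2}{2}\Delta+\tfrac{x^2}{2}$. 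Since $\vartheta_0 = \chi\circ\g^{-1}$ is strictly monotonic on the relevant range (using $\g'\neq 0$, a consequence of $V'(x_0^\pm)\neq 0$), $\vartheta_\hbar$ is invertible there. By Remark~\ref{rk:eigsep}, the eigenvalues of $H_V$ and of $H_{osc}$ in $\Omega$ are simple and $\hbar$-separated; combined with the displayed identity, min-max produces a bijection between them. Label the orthonormal eigenfunctions $(\phi_j)$ of $H_V$ so that $\phi_j$ corresponds to the Hermite function $\psi_j$ under this bijection whenever $\lambda_j \in \Omega_0$ (absorbing the resulting fixed index shift into the notation), and take any orthonormal completion for the remaining indices.

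For $j$ with $\lambda_j\in\Omega_0$, set $\tilde\phi_j := U_\hbar^* \psi_j$. Applying the second bullet of Proposition~\ref{prop:aa} to a cutoff $b\equiv 1$ near the circle $\kappa(\mathcal{C}_\mu)$, which in particular equals $1$ at the support of every Hermite function of energy near $\g(\mu)$, gives $U_\hbar^*U_\hbar\psi_j = \psi_j + \O_{L^2}(\hbar^\infty)$ uniformly in $j$ within this range; hence $\langle \tilde\phi_j, \tilde\phi_k\rangle = \delta_{jk} + \O(\hbar^\infty)$. The intertwining then forces $\tilde\phi_j$ to be $\O(\hbar^\infty)$-close in $L^2$ to an exact eigenfunction of $H_V$ associated with the eigenvalue matched to $\psi_j$, and $\hbar$-separation guarantees this matching is unique, so $\phi_j = \tilde\phi_j + \O_{L^2}(\hbar^\infty)$ after normalisation. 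For $j,k$ both in the good range, this gives
\[
\langle \phi_j, Op_{\hbar}(a)\phi_k\rangle = \langle \psi_j, U_\hbar Op_{\hbar}(a) U_\hbar^* \psi_k\rangle + \O(\hbar^\infty),
\]
and the third bullet of Proposition~\ref{prop:aa} rewrites the right-hand side as $\langle \psi_j, Op_{\hbar}(b)\psi_k\rangle + \O(\hbar^\infty)$ for a symbol $b_\hbar = a\circ\kappa^{-1}+\O(\hbar)$ supported near $\kappa(\mathcal{C}_\mu)$. When $j$ or $k$ lies outside the good range, either $\lambda_j$ is separated from $\mu$ (so $Op_{\hbar}(a)\phi_j = \O_{L^2}(\hbar^\infty)$ by Lemma~\ref{lem:microloc}, since $a$ is supported near $\mathcal{C}_\mu$) or $(j+\tfrac12)\hbar$ is separated from $\g(\mu)$ (so $Op_{\hbar}(b)\psi_j = \O_{L^2}(\hbar^\infty)$ by the same lemma applied to the harmonic oscillator); in either case both sides of~\eqref{eq:aa-applied} are $\O(\hbar^\infty)$.

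The rank formula follows from the same bijection: ${\rm rank}(\1_{(-\infty,\mu]}(H_V))$ equals the number of $n\in\N_0$ with $\vartheta_\hbar^{-1}((n+\tfrac12)\hbar) \le \mu$, which by $\vartheta_\hbar = \chi\circ\g^{-1} + \O(\hbar)$ amounts to $(n+\tfrac12)\hbar \le \g(\mu) + \O(\hbar)$, yielding $\hbar^{-1}\g(\mu)+\O(1)$. The main technical obstacle is ensuring that the $\O(\hbar^\infty)$ estimates are genuinely uniform in $(j,k)$: the action--angle identity is a priori valid only modulo $J^1$-errors on a spectrally localised subspace, so transferring this to pointwise bounds on matrix elements requires combining ${\rm rank}(\chi(H_V)) = \O(\hbar^{-1})$ with the microlocal trivialisation of symbols supported away from the energy surface via Lemma~\ref{lem:microloc}. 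A secondary subtlety is the treatment of indices in the transition zone near the boundary of $\Omega_0$, handled by enlarging $\chi$ slightly and iterating the argument so that the ``good range'' covers every index where either $\phi_j$ or $\psi_j$ can contribute non-trivially to the matrix element.
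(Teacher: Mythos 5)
Your overall strategy---pull back the Hermite basis through $U_\hbar$, use the $\hbar$-separation of eigenvalues and spectral stability to identify $U_\hbar^*\psi_j$ with an exact eigenfunction, apply the third bullet of Proposition~\ref{prop:aa} in the good range and Lemma~\ref{lem:microloc} outside it---is the same as the paper's. But there is a genuine gap at the point you dispose of in one clause: ``absorbing the resulting fixed index shift into the notation.'' The bijection you obtain from the conjugation identity and min--max only matches eigenvalues of $-\hbar^2\Delta+V$ lying in $\Omega_0$ with Hermite levels whose actions lie near $\g(\mu)$; it says nothing about which \emph{index} $j$ (in the eigenvalue ordering) corresponds to which quantum number $n$. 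The discrepancy $s(\hbar)=n-j$ is the difference between the number of eigenvalues of $-\hbar^2\Delta+V$ below $\Omega_0$ and the number of Hermite levels below $\g(\Omega_0)$; since Proposition~\ref{prop:aa} is only \emph{semi-global} (valid near the energy $\mu$), this quantity is a priori of size $\O(\hbar^{-1})$ and $\hbar$-dependent, not a ``fixed shift'' one can relabel away. Showing $s(\hbar)=0$ is exactly the Maslov-index computation, and the paper devotes the core of its proof to it: one deforms the symbol to a globally single-well $p$ equal to $\xi^2+V(x)$ near $\mathcal{C}_\mu$, invokes the action--angle theorem on the whole bottom of the spectrum so that indices match from the ground state up, and then uses a homotopy plus microlocalisation to transfer the matching back to $-\hbar^2\Delta+V$ without an index jump.

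This is not a cosmetic issue. First, your derivation of the rank formula is circular: you count the $n$ with $\vartheta_\hbar^{-1}((n+\tfrac12)\hbar)\le\mu$, but the semi-global conjugation controls only the eigenvalues near $\mu$, not the total number below $\mu$; obtaining the $\O(1)$ remainder in ${\rm rank}(\Pi_\hbar)=\hbar^{-1}\g(\mu)+\O(1)$ is \emph{equivalent} to ruling out the index shift. Second, if you relabel the $\phi_j$ to force \eqref{eq:aa-applied}, the basis no longer satisfies $\Pi_\hbar=\sum_{j<N}|\phi_j\rangle\langle\phi_j|$, which is used downstream (Lemma~\ref{prop:FM} and the cutoff at $j<N$ in Section~\ref{sec:end-proof}); and your treatment of indices outside the good range also silently uses the matching, since you need ``$\lambda_j$ far from $\mu$'' and ``$(j+\tfrac12)\hbar$ far from $\g(\mu)$'' to hold for the \emph{same} $j$. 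You need to supply the deformation/Maslov argument (or an equivalent global Bohr--Sommerfeld count) to close the proof.
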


\begin{proof}
Since the eigenvalues of $H_{\hbar}$ lying in a neighborhood of $\mu$ are separated by  about $\hbar$ (Remark~\ref{rk:eigsep}), the operator $U_{\hbar}$ from Proposition \ref{prop:aa} maps the eigenfunction of $H_{\hbar}$ onto the corresponding Hermite function\footnote{\label{fn:stability}The proof of this
well-known claim goes as follows: suppose a bounded self-adjoint operator
$H_0$ has an isolated simple eigenvalue $\lambda_0$ with
$\dist(\lambda_0,\sigma(H_0)\setminus \lambda_0)=\epsilon_0>0$. If 
$\psi$ is normalised in $L^2$ and approximately solves the eigenvalue equation:
$\|(H_0-\lambda_0)\psi\|=\delta\ll \epsilon_0$, then 
$\psi=\langle \psi_0,\psi\rangle \psi_0+\Pi\psi$ where $\Pi$ is the
spectral projector onto the orthogonal of $\psi_0$ and
$\|(H_0-\lambda_0)\Pi\psi\|=\delta$. Hence, since $(H_0-\lambda_0)$ is
invertible on the range of $\Pi$, with inverse bounded by
$\epsilon_0^{-1}$, we conclude that $\|\Pi\psi\|\leq \delta/\epsilon_0$.}; for every
$j\in \N$ such that $\lambda_j\in [\mu-\epsilon,\mu+\epsilon]$ there exists
$k =k(j)\in \N$ and a phase $\alpha_{k}$ such that
\begin{equation}\label{eq:Uh_on_eigen}
\phi_j=e^{i\alpha_k}U_{\hbar}\psi_k+\O(\hbar^{\infty}).
\end{equation}
In the last equation, one has in fact $j=k$. This follows by computing the Maslov index of $U_{\hbar}$ (see \cite{colin_de_verdiere_spectre_1980,vu_ngoc_bohr-sommerfeld_2000}).
A direct way to see
that indices match is to use a deformation argument and a stronger
action-angle formula, which we now explain.

Let $p:\R^2\to \R$ be such that $p(x,\xi)=V(x)+\xi^2$ near
$C_{\mu}$ and such that, on $\{p<2\mu\}$, there is only one
critical point which is a non-degenerate local minimum. In this
situation, we can extend the action-angle theorem to the whole
bottom of the spectrum (as in \cite{vu_ngoc_systemes_2006},
Theorem 4.22), and since the bottom of the spectra must match, for
the eigenfunctions $\tilde{\phi}_j$ of $Op_{\hbar}(p)$, one has
exactly $U_{\hbar}\tilde{\phi}_j=e^{i\alpha_j}\psi_j +\O(\hbar^{\infty})$
for all $j$ with $\tilde{\lambda}_j<2\mu$.

Now, by microlocalisation arguments (Lemma~\ref{lem:microloc}), the eigenvalues
and eigenfunctions of $Op_{\hbar}(p)$ and $-\hbar^2\Delta+V$ are
identical, up to $\O(\hbar^{\infty})$, for all eigenvalues in $[\mu-\epsilon,\mu+\epsilon]$ and there is no index shift in this region because one can construct a
homotopy between these two operators which fixes the number of eigenvalues in $\{\lambda<\mu-\epsilon\}$ and $\{\lambda\le \mu+\epsilon\}$. Thus, if
$\lambda_j\in [\mu-\epsilon,\mu+\epsilon]$,
\[
\tilde{\phi}_j=\phi_j+\O(\hbar^{\infty})
\]
without index shift, which allows us to conclude.

To conclude the proof of Proposition \ref{prop:angle-action-applied},
we can replace $\phi_k$ by $e^{-i\alpha_k}\phi_k$ and apply Proposition \ref{prop:aa} if 
both $\lambda_j, \lambda_k\in [\mu-\epsilon,\mu+\epsilon]$, we obtain
\[
\langle \phi_j,Op_{\hbar}(a)\phi_k\rangle=\langle
\psi_{j},Op_{\hbar}(b)\psi_{k}\rangle + \O(\hbar^{\infty}) . 
\]
Otherwise,
by Proposition \ref{lem:microloc}, say $\phi_j$ is microlocalised away
from the support of $a$, while $\psi_j$ is microlocalised away from
the support of $a\circ \kappa^{-1}$; thus both sides of
\eqref{eq:aa-applied} are $\O(\hbar^{\infty})$; which concludes the
proof.
\end{proof}

\subsection{Approximate Toeplitz structure} \label{sec:Toeplitz}

It is possible to derive an asymptotic formula for $
\langle \psi_j,Op_{\hbar}(b),\psi_k\rangle
$ using either the induction formula on the Hermite functions $(\psi_k)_{k\in\N_0}$ or their
WKB asymptotics as $\hbar\to0$. Similar asymptotic formulas also appear in the
literature \cite{guillemin_spectral_1997} where the proof is quite
involved, notably using the special form of the Guillemin-Wodzicki
residue in the case of a periodic classical flow. We use a different and simpler approach in this article; a unitary
conjugation to Berezin--Toeplitz quantization on the Bargmann--Fock space.

\begin{prop}[\cite{folland_harmonic_1989}, Propositions 2.96 and 2.97; see also
\cite{zworski_semiclassical_2012}, Chapter 13]\label{prop:conj_Barg}
Recall the notation from Proposition~\ref{prop:Hermite}.
Let $\mathcal{B}_{\hbar}$ be the Bargmann transform (or
wavepacket transform) defined by the integral
kernel
\[
\C\times \R\ni (z,x)\mapsto
\mathcal{B}_{\hbar}(z,x)=\frac{1}{(\pi\hbar)^{1/4}}\exp\left(-\frac{z^2+|z|^2-2\sqrt{2}zx+x^2}{2\hbar}\right).
\]
Then $\mathcal{B}_{\hbar}$ is a unitary transformation from $L^2(\R)$ into
\[
\mathcal{H}_{\hbar}=\{f\in L^2(\C), z\mapsto e^{\frac{|z|^2}{2\hbar}}f(z)\text{
is holomorphic}\}
\]
such that for all $j\in \N_0$,
\[\mathcal{B}_{\hbar}\psi_j:z\mapsto
\frac{\hbar^{-\frac{1+j}{2}}}{\sqrt{\pi j!}}z^je^{-\frac{|z|^2}{2\hbar}} .\]

Then, for every $b\in C^{\infty}_c(\R^2,\R)$, there exists
$f_\hbar\in C^{\infty}_c(\C,\R)$ with a formal expansion in powers of
$\hbar$ and main term $f_0(x+i \xi) = b_0(x,\xi) $ such that, 
uniformly as $\hbar\to 0$, for all $u,v\in \mathcal{H}_{\hbar}$ with $\|u\|=\|v\| =1$, 
\begin{equation}\label{eq:Berezin-Toeplitz}
\langle u,\mathcal{B}_{\hbar}Op_{\hbar}(b)\mathcal{B}_{\hbar}^*v\rangle=\int
u(z)\overline{v(z)} f_\hbar(\sqrt{2}z)\dd z +\O(\hbar^{\infty}).
\end{equation}
\end{prop}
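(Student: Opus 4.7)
The plan is to verify the statement in three stages: first the unitarity of $\mathcal{B}_{\hbar}$ and its action on the Hermite basis, then the explicit integral-kernel computation for $\mathcal{B}_{\hbar}Op_{\hbar}(b)\mathcal{B}_{\hbar}^*$, and finally the identification of the Berezin--Toeplitz symbol $f_{\hbar}$ via Gaussian asymptotics.

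For the first stage, I would compute $\mathcal{B}_{\hbar}\psi_0$ directly as a one-dimensional Gaussian integral, obtaining a constant multiple of $e^{-|z|^2/(2\hbar)}$. Then I would verify the intertwining relations $\mathcal{B}_{\hbar}A_{\hbar}^* = \sqrt{\hbar}\, M_z \mathcal{B}_{\hbar}$ and $\mathcal{B}_{\hbar}A_{\hbar} = \sqrt{\hbar}\, \partial_z \mathcal{B}_{\hbar}$ (with $M_z$ denoting multiplication by $z$) by differentiating the Bargmann kernel. Iterating the first relation applied to $\psi_0$ produces $\mathcal{B}_{\hbar}\psi_j$ as the prescribed multiple of $z^j e^{-|z|^2/(2\hbar)}$, with the normalisation constant pinned down by $\|\mathcal{B}_{\hbar}\psi_j\|=1$. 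Unitarity onto $\mathcal{H}_{\hbar}$ then follows because the resulting family is a Hilbert basis of $\mathcal{H}_{\hbar}$ (orthogonality is a polar-coordinate computation, and completeness reduces to density of polynomials in $\mathcal{H}_{\hbar}$).

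For \eqref{eq:Berezin-Toeplitz}, I would write the integral kernel of $T_{\hbar} := \mathcal{B}_{\hbar}Op_{\hbar}(b)\mathcal{B}_{\hbar}^*$ on $\mathcal{H}_{\hbar}$ by inserting the Weyl and Bargmann kernels. This yields a triple integral in $(x,y,\xi)$ of the form $e^{-Q(x,y,z,\overline{w})/\hbar}\, e^{i\xi(x-y)/\hbar}\, b(\tfrac{x+y}{2},\xi)$ for an explicit quadratic form $Q$. After the change of variables $u=(x+y)/2$, $v=x-y$, the $v$-integral is Gaussian with oscillating weight $e^{i\xi v/\hbar}$ and can be evaluated explicitly; the remaining integral over $(u,\xi)$ then represents $T_{\hbar}$ as an anti-Wick type operator, namely a Gaussian convolution of $b$ against coherent states centred near $\sqrt{2}z$ and $\sqrt{2}\overline{w}$.

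The last step is to match this anti-Wick representation with the Berezin--Toeplitz form on the right-hand side of \eqref{eq:Berezin-Toeplitz}. By the classical correspondence between anti-Wick and contravariant (Toeplitz) symbols on Bargmann--Fock space, the symbol $f_{\hbar}$ is obtained from $b$ by a formal heat-semigroup expansion in $\hbar$, whose leading term is the complexification $f_0(x+i\xi)=b_0(x,\xi)$ and whose higher-order terms arise from the Taylor expansion of the Gaussian convolution kernel. The main obstacle will be the bookkeeping of these Gaussian integrals and the control of remainders uniformly for $\|u\|=\|v\|=1$; this is guaranteed because $b$ is compactly supported, so truncation errors are $\O(\hbar^{\infty})$ in Schwartz seminorms. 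The detailed computation is exactly that of \cite[Prop.~2.96--2.97]{folland_harmonic_1989} and \cite[Chap.~13]{zworski_semiclassical_2012}, which I would invoke for the explicit verification.
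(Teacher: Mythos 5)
The paper gives no proof of this proposition: it is quoted directly from \cite[Propositions 2.96--2.97]{folland_harmonic_1989} and \cite[Chapter 13]{zworski_semiclassical_2012}, and your sketch reproduces the standard argument from those references (intertwining of $\mathcal{B}_\hbar$ with the creation/annihilation operators to get the Hermite-basis formula and unitarity, then the kernel computation turning the Weyl quantization into an anti-Wick/Toeplitz operator whose contravariant symbol is a Gaussian (heat-semigroup) regularisation of $b$, expandable in powers of $\hbar$ with leading term $b_0$), so it is consistent with what the paper invokes. One small normalisation caveat: with the paper's conventions (Remark~\ref{rem:Hermite}) the intertwining is $\mathcal{B}_\hbar A_\hbar^*\mathcal{B}_\hbar^*=M_z$ \emph{without} the factor $\sqrt{\hbar}$ you wrote, but since you pin the constant down a posteriori by requiring $\|\mathcal{B}_\hbar\psi_j\|=1$, this does not affect the argument.
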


\begin{rem}\label{rem:Hermite}
$\mathcal{B}_{\hbar}A_\hbar^* \mathcal{B}_{\hbar}^*  : u\mapsto zu $ corresponds to the usual creation operator on $\mathcal{H}_{\hbar}$. In particular, formula \eqref{eq:Berezin-Toeplitz} is exact in this case and the normalization of $\mathcal{B}_{\hbar}$ is consistent with the physics literature.
\end{rem}

With this conjugation, one can easily obtain the asymptotics of matrix
elements for $Op_{\hbar}(b)$ in the Hermite basis for any $b\in C^{\infty}_c$.

\begin{prop}\label{prop:limit_Toep_Bargmann}
Let $b_\hbar\in C^{\infty}_c(\R^2,\R)$.
For every $\epsilon>0$, as $\hbar \to 0$, uniformly for $j, k\geq \epsilon\hbar^{-1}$,
\[
\langle \psi_j,Op_{\hbar}(b)\psi_k\rangle = \hat{b}_{k-j}(\tfrac{j+k}{2}\hbar)+\O_\epsilon(\hbar |j-k|^{-\infty})+\O(\hbar^{\infty}).
\]
where, for $\lambda>0$,  $\big(\hat{b}_k(\lambda)\big)_{k\in\Z}$ denotes the Fourier coefficients of the function 
$\theta\in[0,2\pi] \mapsto f_0(\sqrt{2\lambda} e^{\i\theta} )$  with  $f_0(x+i \xi) = b_0(x,\xi) $. 
\end{prop}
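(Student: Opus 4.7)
The plan is to reduce the computation to the Bargmann--Fock side via Proposition~\ref{prop:conj_Barg}, where both the matrix element and the Hermite basis admit very explicit expressions. By unitarity of $\mathcal{B}_{\hbar}$,
\[
\langle \psi_j,Op_{\hbar}(b)\psi_k\rangle = \int_{\C} (\mathcal{B}_{\hbar}\psi_j)(z)\,\overline{(\mathcal{B}_{\hbar}\psi_k)(z)}\,f_{\hbar}(\sqrt{2}\,z)\,\dd z + \O(\hbar^{\infty}),
\]
and the product $(\mathcal{B}_{\hbar}\psi_j)(z)\,\overline{(\mathcal{B}_{\hbar}\psi_k)(z)}$ equals $\tfrac{z^j\bar{z}^k e^{-|z|^2/\hbar}}{\pi\sqrt{j!\,k!}\,\hbar^{1+(j+k)/2}}$ by the explicit formulas of Proposition~\ref{prop:conj_Barg}.

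\textbf{Angular and radial integration.} Passing to polar coordinates $z = re^{\i\theta}$, one has $z^j\bar{z}^k = r^{j+k}e^{\i(j-k)\theta}$, so the $\theta$-integral picks out exactly the Fourier coefficient of $\theta \mapsto f_{\hbar}(\sqrt{2}\,re^{\i\theta})$ at frequency $k-j$, namely $\hat{f}_{\hbar}^{(k-j)}(r)$. After the substitution $r = \sqrt{\hbar t}$, and setting $n = (j+k)/2$, the matrix element becomes
\[
\langle \psi_j,Op_{\hbar}(b)\psi_k\rangle = \frac{1}{\sqrt{j!\,k!}} \int_0^{\infty} t^{n} e^{-t}\, \hat{f}_{\hbar}^{(k-j)}(\sqrt{\hbar t})\,\dd t + \O(\hbar^{\infty}).
\]
The density $t^n e^{-t}$ concentrates sharply at $t = n \sim \hbar^{-1}$ with Gaussian width $\sqrt{n}$. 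Rescaling $t = n + u\sqrt{n}$ and Taylor-expanding both $\hat{f}_{\hbar}^{(k-j)}(\sqrt{\hbar t})$ around $\sqrt{\hbar n}$ and the log-density around its maximum, together with Stirling's formula $n! = n^n e^{-n}\sqrt{2\pi n}(1+\O(1/n))$, yields
\[
\int_0^{\infty} t^n e^{-t}\, \hat{f}_{\hbar}^{(k-j)}(\sqrt{\hbar t})\,\dd t = n!\,\hat{f}_{\hbar}^{(k-j)}(\sqrt{\hbar n})\bigl(1 + \O(\hbar)\bigr),
\]
because the odd-parity linear correction in $u$ vanishes against the Gaussian.

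\textbf{Matching the stated expression.} A direct Stirling expansion gives, uniformly for $j,k \geq \epsilon\hbar^{-1}$,
\[
\frac{n!}{\sqrt{j!\,k!}} = 1 - \frac{(k-j)^2}{8n} + \O\!\left(\frac{(k-j)^2}{n^2} + \frac{(k-j)^4}{n^3}\right).
\]
Moreover, the semiclassical expansion of the symbol $f_{\hbar}$ gives $\hat{f}_{\hbar}^{(k-j)}(r) = \hat{f}_0^{(k-j)}(r) + \O(\hbar\,(1+|k-j|)^{-N})$ for every $N$. Combining these with $\hat{f}_0^{(k-j)}(\sqrt{\hbar n}) = \hat{b}_{k-j}((j+k)\hbar/2)$, which is immediate from the definitions, yields the announced asymptotic.

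\textbf{Main obstacle.} The prefactor $n!/\sqrt{j!\,k!}$ differs from $1$ by an amount of order $(k-j)^2/n$, which is not small when $|k-j| \sim \sqrt{n} \sim \hbar^{-1/2}$; the same critical scale also limits the Laplace expansion of the radial integral. Both obstructions are overcome simultaneously by the rapid decay $|\hat{f}_{\hbar}^{(d)}(r)| = \O_M((1+|d|)^{-M})$ for every $M$, obtained by iterated integration by parts in $\theta$ from the smoothness of $f_{\hbar}$. This decay in $d = k-j$ is precisely what allows the Stirling and Laplace remainders to be absorbed into the error $\O_{\epsilon}(\hbar |j-k|^{-\infty})$ of the statement rather than a mere $\O(\hbar)$; a separate, trivial argument treats the regime where the peak $r = \sqrt{\hbar n}$ lies outside $\supp(f_{\hbar})$, where the integral is of order $\O(\hbar^\infty)$ by Gaussian decay.
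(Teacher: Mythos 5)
Your proposal is correct and follows essentially the same route as the paper's proof: conjugation to Bargmann--Fock space, polar coordinates to extract the Fourier mode $k-j$, Laplace-type asymptotics of the radial integral at its peak $\lambda=\tfrac{j+k}{2}\hbar$, and absorption of the $\O(\hbar\,|j-k|^{2})$ prefactor discrepancy into $\O(\hbar\,|j-k|^{-\infty})$ via the rapid decay of the Fourier coefficients. The only cosmetic difference is that you control the normalization $n!/\sqrt{j!\,k!}$ by explicit Stirling expansion, whereas the paper packages the same quantity as $e^{(\varpi(k\hbar)+\varpi(j\hbar)-2\varpi(\frac{j+k}{2}\hbar))/2\hbar}$ and bounds it using the smoothness of $\varpi$, after normalizing against the case $f=1$, $j=k$.
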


\begin{proof}
We have for $j,k\in\N_0$,
\[
\langle \psi_j,Op_{\hbar}(b)\psi_k\rangle
= \int_\C u_j(z)\overline{u_k(z)} f_\hbar(\sqrt{2}z)\dd z +\O(\hbar^{\infty})
\]
where $u_j = \mathcal{B}_{\hbar}\psi_j$ and $ f_\hbar$ are as in Proposition~\ref{prop:conj_Barg}.
In particular, $u_k(z)= \gamma_k z^k u_0(z)$ with $\gamma_k =  \sqrt{\frac{\hbar^{-k}}{k!}}$ for $k\in\N_0$, so that
\begin{equation} \label{idu}
\int_\C u_j(z)\overline{u_k(z)} f_\hbar(\sqrt{2}z) \dd z 
= \frac{\gamma_j\gamma_k}{\hbar}\int_0^\infty \hat{f}_{k-j}(\lambda) e^{-\lambda/\hbar} \lambda^{\frac{j+k}{2}}\dd\lambda
\end{equation}
where $\big(\hat{f}_k(\lambda)\big)_{k\in\Z}$ denotes the Fourier coefficients of the function 
$\theta\in[0,2\pi] \mapsto f_\hbar(\sqrt{2\lambda} e^{\i\theta} )$ for $\lambda>0$.

Note that for $\alpha>0$, the function $\lambda \mapsto \lambda - \alpha \log \lambda$ is strictly convex on $\R_+$ with a unique minimum $\varpi(\alpha)$ for $\lambda=\alpha$. Let $\chi \in C^\infty_c(\R)$ be a smooth cut-off which equals to 1 on a neighborhood of $\alpha$.
By the Laplace method, since $f \in C^2(\R_+)$ is bounded, it holds as $\hbar \to0$,
\[\begin{aligned}
\int_0^\infty f(\lambda) e^{-(\lambda - \alpha \log \lambda)/\hbar} \dd\lambda  
&=  \int f(\lambda) \chi(\lambda) e^{-(\lambda - \alpha \log \lambda)/\hbar} \dd\lambda  +\O(\hbar^\infty) \\
&= 2\sqrt{\pi\hbar}\, e^{-\varpi(\alpha)/\hbar}  \big( f(\alpha) + \O(\hbar) \big)
\end{aligned}\]
where the errors are controlled by $\|f\|_{C^0}$ and $\|\chi f\|_{C^2}$ respectively. Moreover, this estimate is locally uniform for $\alpha\in\R_+$.

Moreover, by the Riemann--Lebesgue Lemma, if $f\in C^{\infty}_c(\C,\R)$, it holds for any $\epsilon>0$ and $\ell \in\N_0$,
\begin{equation} \label{RL}
\big\|  \hat{f}_{k}(\cdot) \big\|_{C^\ell(\epsilon,\infty)} = \underset{k\to\infty}{\O_{\epsilon,\ell}}\big(|k|^{-\infty}\big) .
\end{equation}

Hence, applying the Laplace method as above, we conclude that if the indices $j, k\geq \epsilon\hbar^{-1}$,
\[\begin{aligned}
\int_0^\infty \hat{f}_{k-j}(\lambda) e^{-\lambda/\hbar} \lambda^{\frac{j+k}{2}}\dd\lambda
& =  \int_0^\infty \hat{f}_{k-j}(\lambda) e^{-(\lambda - \alpha \log \lambda)/\hbar} \dd\lambda  ,\qquad \alpha= \tfrac{j+k}{2}\hbar , \\
&= 2\sqrt{\pi\hbar}\, e^{-\varpi(\alpha)/\hbar}  \Big( \hat{f}_{k-j}(\alpha) + \O_\epsilon\big(\hbar|k-j|^{-\infty}\big) \Big) .
\end{aligned}\]
In particular, we can replace $f_\hbar$ by its principal part $f_0$ while computing the Fourier coefficients, up to a similar error.

Now, going back to \eqref{idu} with $f_\hbar=1$ and $j=k$, we have for $ k\geq \epsilon\hbar^{-1}$,
\[
1= \int_\C |u_k(z)|^2 \dd z 
=2 \gamma_k^2 \sqrt{\pi/\hbar}\, e^{-\varpi(k\hbar)/\hbar} \big(1 + \O(\hbar) \big) .
\]
This implies that for $j, k\geq \epsilon\hbar^{-1}$,
\[\begin{aligned}
\int_\C u_j(z)\overline{u_k(z)} f_\hbar(\sqrt{2}z)\dd z 
&= e^{\big(\varpi(k\hbar)+\varpi(j\hbar)-2\varpi(\frac{j+k}2\hbar)\big)/2\hbar}\Big( \hat{f}_{k-j}(\tfrac{j+k}2\hbar) + \O_\epsilon\big(\hbar|k-j|^{-\infty}\big) \Big) .
\end{aligned}\]
Since $\varpi:\R_+ \ni \alpha \to \alpha(1-\log \alpha)$ is a smooth strictly concave function with 
$\| \varpi\|_{C^2(\epsilon,\infty)} \le C/\epsilon$, we conclude that 
\[
e^{\big(\varpi(k\hbar)+\varpi(j\hbar)-2\varpi(\frac{j+k}2\hbar)\big)/2\hbar}  = 1- \O_\epsilon\big(\hbar|k-j|^{2}\big)
\]
which together with \eqref{RL}, yields the final claim. 
\end{proof}

Propositions \ref{prop:angle-action-applied} and
\ref{prop:limit_Toep_Bargmann} together imply the following approximate Toeplitz formula for the matrix elements of a pseudo-differential operator.

\begin{prop}\label{prop:limit_Toep}
Recall that $(\phi_j)_{j\in \N_0}$ is a Hilbert basis of eigenfunctions of $-\hbar^2\Delta+V$.
For every symbol $a\in C^{\infty}_c(\R^2,\R)$ supported in a small neighborhood of the curve $\mathcal{C}_\mu$, it holds uniformly for $0\le j,k \le 2N(\hbar)$, 
\begin{equation*}
\langle\phi_j,Op_{\hbar}(a)\phi_k\rangle=\hat{a}_{k-j}(\tfrac{j+k}2\hbar)+\O\big(\hbar|j-k|^{-\infty}\big)
\end{equation*}
where the Fourier coefficients $\big(\hat{a}_k(I)\big)_{k\in\Z}$  are given by formula \eqref{Fourierb}.
\end{prop}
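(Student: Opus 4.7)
The plan is to chain Propositions \ref{prop:angle-action-applied} and \ref{prop:limit_Toep_Bargmann}, identify the Fourier coefficients produced on the Bargmann side with those of formula \eqref{Fourierb}, and separately treat the regime where the second proposition does not directly apply.

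First, since $a\in C^\infty_c(\R^2,\R)$ is supported in a small neighborhood of $\mathcal{C}_\mu$, Proposition \ref{prop:angle-action-applied} furnishes a symbol $b_\hbar\in C^\infty_c(\R^2,\R)$, supported in a small neighborhood of $\kappa(\mathcal{C}_\mu) = \{x^2+\xi^2=2\g(\mu)\}$ with principal part $b_0 = a\circ\kappa^{-1}$, such that
\[
\langle\phi_j,Op_\hbar(a)\phi_k\rangle = \langle\psi_j,Op_\hbar(b)\psi_k\rangle + \O(\hbar^\infty)
\]
uniformly in $j,k\in\N_0$. For indices with $\min(j,k)\ge \epsilon\hbar^{-1}$ and $\epsilon>0$ small, Proposition \ref{prop:limit_Toep_Bargmann} then yields
\[
\langle\psi_j,Op_\hbar(b)\psi_k\rangle = \hat b_{k-j}\big(\tfrac{j+k}{2}\hbar\big) + \O_\epsilon\big(\hbar|j-k|^{-\infty}\big),
\]
where $\hat b_\ell(\lambda)$ is the $\ell$-th Fourier coefficient of $\theta\mapsto f_0(\sqrt{2\lambda}\,e^{\i\theta})$ with $f_0(x+\i\xi) = b_0(x,\xi) = a\circ\kappa^{-1}(x,\xi)$. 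Writing $\sqrt{2\lambda}\,e^{\i\theta} = \sqrt{2\lambda}\cos\theta + \i\sqrt{2\lambda}\sin\theta$, this function of $\theta$ is exactly the integrand of \eqref{Fourierb} at $I=\lambda$, hence $\hat b_\ell(\lambda) = \hat a_\ell(\lambda)$, which gives the claimed identity in this regime.

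The main technical obstacle is the complementary regime $\min(j,k) < \epsilon\hbar^{-1}$, where Proposition \ref{prop:limit_Toep_Bargmann} is not directly available. I claim that in this regime both sides of the target identity are $\O(\hbar^\infty)$ uniformly, so the bound holds automatically. The radial support of $b_0$ lies in a thin annulus $\{|z|^2 \in [2\g(\mu) - 2\delta, 2\g(\mu) + 2\delta]\}$; choosing $\epsilon$ so that $\epsilon < \g(\mu) - \delta$, any Hermite function $\psi_j$ with $j<\epsilon\hbar^{-1}$ is microlocalised inside the disk $\{|z|^2 < 2\g(\mu) - 2\delta\}$, which is disjoint from $\supp(b)$. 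By standard pseudodifferential microlocalization one then has $Op_\hbar(b)\psi_j = \O_{L^2}(\hbar^\infty)$, hence $\langle\psi_j, Op_\hbar(b)\psi_k\rangle = \O(\hbar^\infty)$ uniformly in $k$. On the right-hand side, either $\tfrac{j+k}{2}\hbar$ falls outside the radial support of $a\circ\kappa^{-1}$ (so $\hat a_{k-j}(\tfrac{j+k}{2}\hbar) = 0$), or $\max(j,k)\gtrsim \hbar^{-1}$, in which case $|j-k|\gtrsim \hbar^{-1}$ and the Riemann--Lebesgue estimate \eqref{RL} applied to the smooth function $b_0$ forces $\hat a_{k-j}(\tfrac{j+k}{2}\hbar) = \O(\hbar^\infty)$. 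Assembling the two regimes gives the uniform estimate on the full range $0\le j,k\le 2N(\hbar)$.
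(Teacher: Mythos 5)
Your proof is correct and follows the same route as the paper: conjugate to the Hermite basis via Proposition~\ref{prop:angle-action-applied}, apply Proposition~\ref{prop:limit_Toep_Bargmann}, and identify the resulting Fourier coefficients with \eqref{Fourierb}. Your explicit treatment of the complementary regime $\min(j,k)<\epsilon\hbar^{-1}$ (microlocalisation of low Hermite modes away from $\supp(b)$, together with the support/Riemann--Lebesgue dichotomy for $\hat a_{k-j}(\tfrac{j+k}{2}\hbar)$) fills in a step the paper leaves implicit, and is sound.
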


\begin{proof}
Proposition \ref{prop:angle-action-applied} (a consequence of the action-angle theorem) allows for the appropriate range of indices to compute the matrix elements in the Hermite basis after replacing the symbol $a$ by $b_\hbar$, up to a uniform error $\O(\hbar^{\infty})$.  
Then Proposition \ref{prop:limit_Toep_Bargmann} yields the asymptotics of these coefficients with the appropriate off-diagonal decay. We note that the leading terms only depend on the principal part ($b_0=a\circ \kappa^{-1}$) of the symbol $b_\hbar$ and they correspond to the Fourier coefficients  \eqref{Fourierb}  (or equivalently \eqref{Fouriera}) with $I = \hbar\tfrac{j+k}2$, while the errors $\O(\hbar^{\infty})$ are negligible compared to $\O(\hbar|j-k|^{-\infty})$ in the appropriate range. 
\end{proof}

\begin{lem}\label{prop:FM}
For every symbol $a\in S^0(\R^2,\C)$, as $\hbar\to0$, 
\[
\|[\Pi_{\hbar},Op_{\hbar}(a)]\|_{J^2} = \O(1) . 
\]
\end{lem}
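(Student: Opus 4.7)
The plan is to decompose $a$ into a piece compactly supported near the energy curve $\mathcal{C}_\mu$ and a remainder supported away from it, and estimate each commutator separately. Pick $\chi \in C^{\infty}_c(\R^2,\R)$ supported in a small neighbourhood of $\mathcal{C}_\mu$ and equal to $1$ on a strictly smaller one, and write $a = a' + a''$ with $a' = \chi a \in C^{\infty}_c(\R^2,\C)$ supported near $\mathcal{C}_\mu$ and $a'' = (1-\chi) a \in S^0$ satisfying $\supp(a'') \cap \mathcal{C}_\mu = \emptyset$. Then Lemma~\ref{lem:microloc} yields $\|[\Pi_\hbar, Op_{\hbar}(a'')]\|_{J^1} = \O(\hbar^\infty)$, which is more than enough for the $J^2$ norm. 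Everything therefore reduces to showing $\|[\Pi_\hbar, Op_{\hbar}(a')]\|_{J^2} = \O(1)$.

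I would work in a Hilbert basis $(\phi_j)_{j\in \N_0}$ of eigenfunctions of $-\hbar^2\Delta+V$ with eigenvalues $\lambda_j$ enumerated increasingly, so that $\Pi_\hbar$ is diagonal with entries $\1_{\lambda_j \le \mu}$. Writing $a'_{jk} = \langle \phi_j, Op_{\hbar}(a')\phi_k\rangle$ and letting $N = N(\hbar)$ be the rank of $\Pi_\hbar$, a direct computation gives
\[
\|[\Pi_\hbar, Op_{\hbar}(a')]\|_{J^2}^2 = 2 \sum_{j < N \le k} |a'_{jk}|^2.
\]
This sum splits according to whether $\lambda_j$ and $\lambda_k$ lie in a fixed neighbourhood of $\mu$. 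Choosing $\epsilon > 0$ so that $\supp(a') \subset \{|\xi^2 + V(x) - \mu| < \epsilon/2\}$, the first claim of Lemma~\ref{lem:microloc} yields $\|Op_{\hbar}(a')\phi_\ell\|_{L^2} = \O(\hbar^\infty)$ for every $\ell$ with $|\lambda_\ell - \mu| > \epsilon$. Combined with ${\rm rank}(\Pi_\hbar) = \O(\hbar^{-1})$, this shows that pairs $(j,k)$ for which $\lambda_j$ or $\lambda_k$ falls outside $[\mu-\epsilon,\mu+\epsilon]$ contribute only $\O(\hbar^\infty)$.

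For the remaining pairs, $\lambda_j,\lambda_k \in [\mu-\epsilon,\mu+\epsilon]$, so by the Weyl law $j$ and $k$ both lie in an interval of length $\O(\hbar^{-1})$ around $N$, hence $0 \le j,k \le 2N$ for $\hbar$ sufficiently small. Proposition~\ref{prop:limit_Toep} then applies and gives
\[
|a'_{jk}| \le \big|\hat a'_{k-j}(\tfrac{j+k}{2}\hbar)\big| + \O\big(\hbar(1+|j-k|)^{-M}\big)
\]
for any $M \ge 1$. Since $a'\in C^{\infty}_c$, its Fourier coefficients $\hat a'_m(I)$ decay faster than any polynomial in $|m|$ uniformly for $I$ in a compact set, so $|a'_{jk}| \le C_M(1+|j-k|)^{-M}$. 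The number of pairs $(j,k)$ with $j < N \le k$ and $k - j = m \ge 1$ is $\min(m,N) \le m$, so
\[
\sum_{j < N \le k} |a'_{jk}|^2 \le C_M^2\sum_{m \ge 1} m(1+m)^{-2M} = \O(1)
\]
for any $M > 1$, completing the proof.

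The main subtle point is that $\Pi_\hbar$ has rank of order $\hbar^{-1}$, so naive bounds like $\|\Pi_\hbar Op_{\hbar}(a')\|_{J^2} \le \sqrt{N}\|Op_{\hbar}(a')\|_{L^2\to L^2} = \O(\hbar^{-1/2})$ already fail to give a uniform bound on the commutator; one really needs the cancellation between $\Pi_\hbar Op_{\hbar}(a')$ and $Op_{\hbar}(a')\Pi_\hbar$. The approximately Toeplitz structure of Proposition~\ref{prop:limit_Toep} provides this cancellation quantitatively: the rapid off-diagonal decay of $a'_{jk}$ in $|j-k|$ exactly dominates the linear growth in $m$ of the number of boundary pairs at distance $m$ from the diagonal.
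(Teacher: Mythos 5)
Your proof is correct and follows essentially the same route as the paper: reduce to a symbol supported near $\mathcal{C}_\mu$ via Lemma~\ref{lem:microloc}, compute the Hilbert--Schmidt norm in the eigenbasis, invoke the approximate Toeplitz structure of Proposition~\ref{prop:limit_Toep} for the rapid off-diagonal decay, and observe that the number of pairs $(j,k)$ with $j<N\le k$ at distance $m$ from the diagonal grows only linearly in $m$. The only cosmetic point is that for a complex (non-self-adjoint) symbol the commutator's squared $J^2$ norm is $\sum_{j<N\le k}(|a'_{jk}|^2+|a'_{kj}|^2)$ rather than $2\sum_{j<N\le k}|a'_{jk}|^2$, but both terms are estimated identically so nothing changes.
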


\begin{proof}
Proposition
\ref{prop:limit_Toep} can be used to determine the limit of
$\|[\Pi_{\hbar},Op_{\hbar}(a)]\|_{J^2}$ as $\hbar\to 0$ for any
$a_\hbar\in S^0(\R^2,\C)$. Indeed, again by Lemma \ref{lem:microloc}, one can assume that the symbol $a$ is supported on a small neighborhood of the curve $\mathcal{C}_\mu$. 
Then, letting $A_{ij} = \langle \phi_j , Op_{\hbar}(a) \phi_i \rangle$
for $i,j\in \N_0$, by definition of the Hilbert-Schmidt norm, 
\[
\|[\Pi_{\hbar},Op_{\hbar}(a)]\|_{J^2}^2 = \sum_{i,j\in\N_0} \big(\1\{j<N, i\ge N \}+ \1\{i<N, j\ge N \}\big) |A_{ij}|^2 ,
\]
where we recall that $N={\rm rank}(\1_{(-\infty,\mu]}(-\hbar^2\Delta+V))$.
By Proposition \ref{prop:limit_Toep}, the coefficients $A_{ij}$ decay like $|i-j|^{-\infty}$ so that we can truncate the previous sum to $N(\hbar)-\epsilon \le i,j\le N(\hbar)+\epsilon$ for any $\epsilon>0$, then by dominated convergence, we obtain
\[
\lim_{\hbar\to 0}\|[\Pi_{\hbar},Op_{\hbar}(a)]\|_{J^2}^2 =
\sum_{k\in \Z}|k||\hat{a}_k(\g(\mu))|^2.
\]

This proves the claim and we note that, by symmetry, the limit is $2\Sigma_{(V,\mu)}^2(a)$ according to formula ~\eqref{eq:Sigma}. 
We also record that the Szeg\H{o} limit in Theorem~\ref{thr:single_well} is $\Sigma_{(V,\mu)}^2(f)$ with $a = e^{\eta f}-1$. 
In particular,  as expected, we recover that $\Sigma_{(V,\mu)}^2(a) \sim \eta^2 \Sigma_{(V,\mu)}^2(f)$ as $\eta\to0$. 
Theorem \ref{thr:single_well} is a
generalisation of this $J_2$ norm convergence, by computing the limit of higher order terms (cumulants) in the Taylor expansion of the Laplace transform $\log
\mathbb{E}[e^{\eta\X(f)}]$, \eqref{Laplace}.

Alternative arguments (valid in any dimension) can be found in
\cite[Section~5.4]{deleporte_universality_2021} when $a$ is a function
of $x$ 
with compact support inside the bulk (by Lemma \ref{lem:microloc} we may assume that $a$ is supported on a
small neighbourhood of $C_{\mu}$) and also in the proof of \cite{fournais_optimal_2020}.
\end{proof}

\subsection{Fredholm determinants}
\label{sec:strong-szegho-limit}

The proof of Theorem \ref{thr:single_well} relies on a Taylor expansion of the Fredholm determinant  on the RHS of \eqref{Laplace}, or rather of a renormalised determinant (obtained by removing the mean of $\X(f)$); this requires still a few preparatory results.

Let $\Pi$ be a finite rank projection. 
Recall that if $A$ is an operator with $\|A\|<1$, then the Fredholm determinant
$\det(1+A\Pi)$ is well-defined and positive.
Moreover, the operator $\log(1+A)$ is bounded and we have
\begin{equation} \label{Fredholm_exp}
\Upsilon_{\Pi}(A) : =  \log \det(1+A\Pi)- \tr(\log(1+A)\Pi)=\sum_{n\geq
2}\frac{(-1)^n}{n}\tr(\Pi A^n\Pi-(\Pi A\Pi)^n).
\end{equation}
This series is (absolutely) convergent  since we have the trivial bound $\big\| (\Pi A\Pi)^n-\Pi A^n\Pi \big\|_{J^1} \le 2 N\|A\|^n$ where $N = \tr \Pi$. 
In particular, we write 
\(
\Upsilon_{\Pi}(zA) = \sum_{n\geq2} \tfrac{(-z)^n}{n} \Upsilon_{\Pi}^n(A)
\)
for $|z|<1$. 
Our next result provides a uniform bound for \eqref{Fredholm_exp} and a perturbative expansion of its coefficients in terms of the Hilbert-Schmidt norm of certain commutators. 

\begin{lem} \label{lem:Fredh}
For any operator $A$ with $\|A\|\le\rho$, one has $|\Upsilon_{\Pi}^n(A)| \le \tfrac{n(n-1)}{4} \| [\Pi,A] \|_{J^2}^2\, \rho^{n-2}$ for any $n \ge 2$. 
Moreover, if $\rho<1$, there is a constant $C$ depending only on $\rho$ and $\| [\Pi,A] \|_{J^2}$ so that
for any operator $R$ with $\|R\| \le\rho$, one has for any $n \ge 2$.
\[
|\Upsilon_{\Pi}^n(A+R) - \Upsilon_{\Pi}^n(A)| \le C 2^n  \max_{1\le k< n}  \| [\Pi,A^{k-1}R]\|_{J^2} . 
\]
\end{lem}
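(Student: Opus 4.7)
The plan is to analyse both bounds via a double-telescoping identity for $\Pi A^n \Pi - (\Pi A \Pi)^n$, combined with the Pythagorean identity
\[
\|[\Pi, A]\|_{J^2}^2 = \|\Pi A Q\|_{J^2}^2 + \|Q A \Pi\|_{J^2}^2,
\]
where $Q = 1 - \Pi$. This follows from $[\Pi, A] = \Pi A Q - Q A \Pi$ together with the fact that the two off-diagonal blocks have orthogonal ranges in the $J^2$ inner product. I also use the Hölder-type bound $|\tr(C_1 \cdots C_m)| \le \|C_i\|_{J^2} \|C_j\|_{J^2} \prod_{\ell \neq i, j} \|C_\ell\|$, which lets me place the $J^2$ cost on exactly two factors and bound the rest in operator norm.

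For the first bound, I iterate $\Pi A = \Pi A \Pi + \Pi A Q$ from the left to obtain
\[
\Pi A^n \Pi - (\Pi A \Pi)^n = \sum_{k=0}^{n-2} (\Pi A \Pi)^k (\Pi A Q)\, A^{n-k-1} \Pi,
\]
then iterate $A \Pi = \Pi A \Pi + Q A \Pi$ from the right on each remaining factor $A^{n-k-1}\Pi$, using $Q\Pi = 0$ (so $(\Pi A Q)(\Pi A \Pi)^{\cdot} = 0$) to kill the undesired boundary terms. This produces a double sum over $\binom{n}{2}$ terms of the form $(\Pi A \Pi)^k (\Pi A Q)\, A^{n-k-j-2}\,(Q A \Pi)(\Pi A \Pi)^j$ with $k, j \ge 0$ and $k + j \le n-2$. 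Applying the Hölder bound above with the two off-diagonal commutator factors in $J^2$, followed by AM--GM together with the Pythagorean identity, yields the constant $n(n-1)/4$.

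For the perturbative bound, I first expand $\Upsilon_\Pi^n(A+R) - \Upsilon_\Pi^n(A)$ by full multilinearity: writing $(A+R)^n$ and $(\Pi(A+R)\Pi)^n$ as sums over words $w \in \{A,R\}^n$, the difference becomes a sum of at most $2^n$ multilinear defects indexed by words containing at least one $R$. I would apply the double telescoping of the first part, generalised to the multilinear setting, to each such defect to obtain a finite sum of traces with exactly two off-diagonal factors. To produce the composite commutator $[\Pi, A^{k-1} R]$ required by the statement, rather than individual commutators $[\Pi, A]$ and $[\Pi, R]$, I regroup: if $k$ is the position of the first $R$ in $w$, the identity $\Pi A^{k-1} R = A^{k-1} R \Pi + [\Pi, A^{k-1} R]$ lets me replace one of the single-letter off-diagonal blocks by the composite block $\Pi A^{k-1} R Q$, whose $J^2$ norm is bounded by $\|[\Pi, A^{k-1} R]\|_{J^2}$. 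The second off-diagonal factor is then bounded in $J^2$ by $\|[\Pi, A]\|_{J^2}$ via Pythagoras, the remaining factors by $\rho$ in operator norm, and summing over the $\le 2^n$ contributions delivers the stated prefactor.

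The main difficulty is in executing this regrouping cleanly. The naive double telescoping only produces single-letter off-diagonal blocks, so the commutators that appear are of the form $[\Pi, A]$ or $[\Pi, R]$, never the composite $[\Pi, A^{k-1} R]$. Shifting a $\Pi$ across the entire prefix $A^{k-1}$ generates residual traces which must be treated inductively (on $n$, or on the position $k$ of the first $R$); checking that the constant $C$ indeed depends only on $\rho$ and $\|[\Pi, A]\|_{J^2}$, and that no stray dependence on $\|[\Pi, R]\|_{J^2}$ creeps into the bound, is the delicate bookkeeping step.
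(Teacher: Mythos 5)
Your proof of the first bound is complete and correct: the two–sided telescoping
\[
\Pi A^n\Pi-(\Pi A\Pi)^n=\sum_{k+j+m=n-2}(\Pi A\Pi)^k(\Pi A Q)\,A^{m}\,(QA\Pi)(\Pi A\Pi)^j ,\qquad Q=1-\Pi,
\]
has exactly $\tbinom{n}{2}$ terms, each controlled by $\rho^{n-2}\|\Pi AQ\|_{J^2}\|QA\Pi\|_{J^2}$, and AM--GM plus the Pythagorean identity give the constant $n(n-1)/4$. This is the same mechanism as the paper's proof (which peels one factor at a time and runs an induction, using $\|[A^m,\Pi]\|_{J^2}\le m\rho^{m-1}\|[\Pi,A]\|_{J^2}$); your closed-form version is, if anything, tidier.

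The second bound is where the real content lies, and there your argument has a genuine gap. You correctly diagnose that the naive telescoping only produces single-letter off-diagonal blocks, but the proposed fix is not executed and, as described, does not close. Concretely: (i) for a word $w$ whose two selected off-diagonal blocks both sit at $A$-positions, with the letter $R$ trapped among the factors bounded in operator norm, the term is bounded by $\|[\Pi,A]\|_{J^2}^2\rho^{n-2}$ --- this carries no smallness in $R$ at all, so no term-by-term estimate after the second telescope can yield the stated bound; the regrouping must replace the second telescope, not follow it. (ii) Your claim that ``the second off-diagonal factor is then bounded in $J^2$ by $\|[\Pi,A]\|_{J^2}$'' is false whenever that block sits at an $R$-position. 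The missing ingredient is a separate inductive estimate on \emph{composite} commutators: for every word $\Gamma\in\{A,R\}^m$ containing at least one $R$,
\[
\|[\Pi,\Gamma_1\cdots\Gamma_m]\|_{J^2}\le c(\rho)\max_{j}\|[\Pi,A^{j}R]\|_{J^2},
\]
proved by writing $\Gamma_1\cdots\Gamma_m=A^{\ell-1}R\,\Gamma_{\ell+1}\cdots\Gamma_m$ ($\ell$ the first $R$-position), applying the Leibniz rule $[\Pi,XY]=[\Pi,X]Y+X[\Pi,Y]$, and recursing on the suffix; the hypothesis $\rho<1$ is what sums the resulting geometric series, and the case of an all-$A$ suffix needs an extra manipulation (the paper uses $[\Pi,RA^{\ell}]^*=-[\Pi,A^{\ell}R]$ for self-adjoint $A,R$) to land back on the allowed commutators $[\Pi,A^jR]$. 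One then combines this with a one-letter-at-a-time peeling of each multilinear defect, which at every step pairs one \emph{single-letter} commutator (absorbed into $C$, noting that $\|[\Pi,R]\|_{J^2}=\|[\Pi,A^0R]\|_{J^2}$ is itself one of the terms in the max) with one \emph{full-subword} commutator controlled by the display above. Your sketch names the right identity but neither states this inductive claim nor verifies that the recursion terminates with the advertised dependence of $C$; until that is done the second bound is not proved.
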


\begin{proof}
Observe that for $n\ge 2$,
\[
\Pi A^n\Pi - (\Pi A\Pi)^n = 
\tfrac12 \big( \Pi  [\Pi,A] [A^{n-1},\Pi]\Pi \big) + (\Pi A\Pi)(\Pi A^{n-1}\Pi - (\Pi A\Pi)^{n-1})
\]
so that by induction, 
\[\begin{aligned}
\| \Pi A^n\Pi - (\Pi A\Pi)^n \|_{J^1} 
&\le \tfrac12 \| [\Pi,A] [A^{n-1},\Pi]\|_{J^1} + \rho \|\Pi A^{n-1}\Pi - (\Pi A\Pi)^{n-1}\|_{J^1} \\
&\le \tfrac12 \| [\Pi,A] \|_{J^2} \sum_{1\le k<n} \rho^{k-1} \| [A^{n-k},\Pi]\|_{J^2}
\end{aligned}\]
where we used that $\|BQ\|_{J^1} \le \|B\|_{J^2}\|Q\|_{J^2}$  for Hilbert-Schmidt operators $B,Q$.
Moreover, we also have for $n\ge 2$,
\[
\| [A^{n},\Pi]\|_{J^2} \le \sum_{1\le k\le n} \| A^{k-1} [A,\Pi] A^{n-k}\|_{J^2} \le  n \rho^{n-1} \| [\Pi,A] \|_{J^2}  . 
\]
This implies that  for $n\ge 2$,
\[
\| \Pi A^n\Pi - (\Pi A\Pi)^n \|_{J^1} \le \tfrac{n(n-1)}{4} \| [\Pi,A] \|_{J^2}^2 \rho^{n-2} 
\]
which yields the first bound. 

For the second bound, we can expand for $n\ge 2$,
\begin{equation} \label{traceexpand}
\Upsilon_{\Pi}^n(A+R) - \Upsilon_{\Pi}^n(A) =  \sum_{\Gamma \in \Omega_n} \tr \big( \Pi\Gamma_1 \cdots \Gamma_n\Pi-\Pi\Gamma_1\Pi\cdots \Gamma_n\Pi  \big) .
\end{equation}
where $\Omega_n := \big\{ \Gamma \in\{A,R\}^n : \Gamma \neq (A,\cdots,A)  \big\}$. 
For $\Gamma \in \Omega_n$, let $\Gamma' \in \Omega_{n-1}$ be given by 
\[
\Gamma' = \begin{cases} 
(\Gamma_1,\dots, \Gamma_{n-1}) &\text{if } \Gamma_1=R \\
(\Gamma_2,\cdots,\Gamma_{n}) &\text{if } \Gamma_1=A 
\end{cases} .
\]
Let us also denote 
\[
\pi(\Gamma) = \Gamma_1 \cdots \Gamma_n
\qquad\text{and}\qquad 
{\rm Q}(\Gamma) = \Pi\Gamma_1 \cdots \Gamma_n\Pi-\Pi\Gamma_1\Pi\cdots \Gamma_n\Pi . 
\]

For $\Gamma \in \Omega_n$ with $n\ge 2$,  we claim that 
\begin{equation} \label{induction_Gamma}
\|{\rm Q}(\Gamma)\|_{J^1}  \le C \| [\Pi, \pi(\Gamma')] \|_{J^2} + \rho \|{\rm Q}(\Gamma')\|_{J^1}
\end{equation}
where $\| [A,\Pi] \|_{J^2} , \| [R,\Pi] \|_{J^2} \le   C$. 

To prove \eqref{induction_Gamma} in case $\Gamma_1 =R$, we decompose
\[ \begin{aligned}
Q(\Gamma) 
& = \Pi [\Pi,\Gamma_1 \cdots \Gamma_{n-1}]\Gamma_n\Pi
+\big(\Pi\Gamma_1 \cdots \Gamma_{n-1}\Pi-\Pi\Gamma_1\Pi\cdots \Pi\Gamma_{n-1}\Pi\big)\Pi\Gamma_n\Pi  \\
& = \Pi [\Pi, \pi(\Gamma')][\Gamma_n,\Pi] + Q(\Gamma') \Pi\Gamma_n\Pi . 
\end{aligned}\]
The argument is analogous in case $\Gamma_1 =A$. 

Then, by induction on  \eqref{induction_Gamma}, there is a sequence $\Gamma^k \in \Omega_k$
for $k =1,\cdots, n$ such that $\Gamma^{n} =\Gamma$,   $\Gamma^{k-1} =\Gamma^{k\prime}$ and 
\[
\|{\rm Q}(\Gamma)\|_{J^1}  \le C  \sum_{1\le k < n} \rho^{n-k-1}   \| [\Pi, \pi(\Gamma^k)] \|_{J^2} .
\]
Now we claim that there is a constant $c(\rho)$ so that for any $n\ge 2$ and $\Gamma \in \Omega_n$, 
\begin{equation} \label{induction_comm}
\| [\Pi, \pi(\Gamma)] \|_{J^2} \le c(\rho) \max_{0\le j<n} \| [A^j R,\Pi]\|_{J^2} .
\end{equation}
Using the two previous bounds, we conclude that there is a constant $C(\rho)$, so that for any $n\ge 2$ and $\Gamma \in \Omega_n$, 
\[
\|  \Pi\Gamma_1 \cdots \Gamma_n\Pi-\Pi\Gamma_1\Pi\cdots \Pi\Gamma_n\Pi \|_{J^1}
\le  C(\rho) \max_{0\le j<n-1} \| [A^j R,\Pi]\|_{J^2} .
\]
Going back to formula \eqref{traceexpand}, with $|\Omega_n| \le 2^n$,  this proves the claim.

Now, to prove \eqref{induction_comm}, note that for $\Gamma \in \Omega_n$, we can decompose
\[
\pi(\Gamma) = A^{\ell-1} R\, \Gamma_{\ell+1} \cdots \Gamma_n \qquad \text{for}\quad \ell \in \{1,\cdots, n\}
\]
so that 
\[ \begin{aligned}
[\Pi, \pi(\Gamma)] &  = [\Pi, A^{\ell-1} R] \Gamma_{\ell+1} \cdots \Gamma_n  
+ A^{\ell-1} R [\Pi,  \Gamma_{\ell+1} \cdots \Gamma_n ]  \\ 
\| [\Pi, \pi(\Gamma)] \|_{J^2} & \le \| [\Pi, A^{\ell-1} R] \|_{J^2} + \|  R [\Pi,  \Gamma_{\ell+1} \cdots \Gamma_n ] \|_{J^2} .
\end{aligned}\]
If 
$\Gamma_{\ell+1} =\cdots =\Gamma_n =A$, we can bound
\[ \begin{aligned}
R [\Pi,  \Gamma_{\ell+1} \cdots \Gamma_n ] &= [R ,\Pi] A^{n-\ell}  + [\Pi, RA^{n-\ell} ] \\
\|  R [\Pi,  \Gamma_{\ell+1} \cdots \Gamma_n ] \|_{J^2}  &\le  \|[R ,\Pi]\|_{J^2} + \| [\Pi, A^{n-\ell} R] \|_{J^2}
\end{aligned}\]
using that $ [\Pi,  R A^{\ell}]^* = - [\Pi, A^{\ell} R]$ for $\ell\ge 0$ since $A,R$ are self-adjoint; then we are done. 
Otherwise,
\[
\| [\Pi, \pi(\Gamma)] \|_{J^2} \le \| [\Pi, A^{\ell-1} R] \|_{J^2} + \rho \| [\Pi,  \pi(\Gamma') ] \|_{J^2} 
\qquad\text{where}\quad
\Gamma' \in \Omega_{n-\ell} , 
\]
and, by induction (using that $\rho<1$), we obtain \eqref{induction_comm}. 
\end{proof}

\begin{prop}  \label{prop:cutoff}
Define $\Upsilon_{\hbar}(a) := \Upsilon_{\Pi_\hbar}(Op_{\hbar}(a)) $ for any symbol $a_\hbar \in S^0(\R^2,\C)$ with $\|a_\hbar\|_{L^{\infty}}\le c <1$ as in Proposition~\ref{prop:Szegoasymp}. 
The coefficients $\Upsilon_{\hbar}^n(a)$ are bounded by constants $C_n$ (independent of $\hbar$) with $\sum_{n\ge 2} C_n <\infty$. 
Moreover, for any cutoff $\chi\in  C^\infty_c(\R^2,[0,1])$ which is equal to $1$ on a neighbourhood of the curve
$\mathcal{C}_{\mu}$, as $\hbar\to0$, 
\[
\Upsilon_{\hbar}(a) = \Upsilon_{\hbar}(a\chi) + o(1) 
\]
and $\Upsilon_{\hbar}^n(a) = \Upsilon_{\hbar}^n(a\chi)+ O_n(\hbar^\infty)$ for $n\ge 2$.
\end{prop}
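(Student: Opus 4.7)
The strategy combines Lemma \ref{lem:Fredh} with Lemma \ref{prop:FM}, Lemma \ref{lem:microloc}, and the pseudo-locality of the Moyal product. The G\aa rding inequality \eqref{Garding}, together with the pointwise bounds $|a\chi|,|a(1-\chi)|\le |a|\le c$, implies that, for $\hbar$ small enough, each of $\|Op_{\hbar}(a)\|$, $\|Op_{\hbar}(a\chi)\|$ and $\|Op_{\hbar}(a(1-\chi))\|$ is at most some fixed $\rho\in(c,1)$.

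For the uniform bounds $C_n$, I apply the first estimate of Lemma \ref{lem:Fredh} with $\Pi=\Pi_\hbar$ and $A=Op_{\hbar}(a)$ to obtain
\[
|\Upsilon_{\hbar}^n(a)|\le \tfrac{n(n-1)}{4}\,\|[\Pi_\hbar,Op_{\hbar}(a)]\|_{J^2}^2\,\rho^{n-2}.
\]
Lemma \ref{prop:FM} gives $\|[\Pi_\hbar,Op_{\hbar}(a)]\|_{J^2}=\O(1)$ uniformly in $\hbar$, so one may take $C_n=C\,n^2\rho^{n-2}$, which is summable since $\rho<1$. The same estimate applies with $a$ replaced by $a\chi$.

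For the cutoff statement, decompose $Op_{\hbar}(a)=A+R$ with $A=Op_{\hbar}(a\chi)$, $R=Op_{\hbar}(a(1-\chi))$, and apply the second estimate of Lemma \ref{lem:Fredh}:
\[
|\Upsilon_{\hbar}^n(a)-\Upsilon_{\hbar}^n(a\chi)|\le C\,2^n\max_{1\le k<n}\|[\Pi_\hbar,A^{k-1}R]\|_{J^2}.
\]
The core task is to show $\|[\Pi_\hbar,A^{k-1}R]\|_{J^2}=\O(\hbar^\infty)$ for each fixed $k$. Since $\chi\equiv 1$ on an open set $U\supset\mathcal{C}_\mu$, the symbol $a(1-\chi)$ vanishes to infinite order at every point of $U$; each term of the Moyal asymptotic expansion of $(a\chi)^{\#(k-1)}\#(a(1-\chi))$ involves derivatives of the factors at a common point, so every term vanishes on $U$. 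Controlling the Moyal remainders in Schwartz semi-norms, the Weyl symbol of $A^{k-1}R$ can therefore be written as $\tilde b_k+r_k$ with $\tilde b_k\in C^{\infty}_c(\R^2)$ supported away from $\mathcal{C}_\mu$ and $r_k=\O_{\mathscr{S}}(\hbar^\infty)$. Lemma \ref{lem:microloc} then yields $\|[\Pi_\hbar,Op_{\hbar}(\tilde b_k)]\|_{J^1}=\O(\hbar^\infty)$; the Schwartz remainder contributes an $\O(\hbar^\infty)$ trace-class term via Definition \ref{def:pseudos}; and the trivial bound $\|\cdot\|_{J^2}\le\|\cdot\|_{J^1}$ on trace-class operators gives the claim. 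This establishes $\Upsilon_{\hbar}^n(a)=\Upsilon_{\hbar}^n(a\chi)+\O_n(\hbar^\infty)$ for every $n\ge 2$.

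Finally, writing $\Upsilon_{\hbar}(a)-\Upsilon_{\hbar}(a\chi)=\sum_{n\ge 2}\tfrac{(-1)^n}{n}\bigl(\Upsilon_{\hbar}^n(a)-\Upsilon_{\hbar}^n(a\chi)\bigr)$, the termwise estimate combined with the dominant $2C_n/n$ (with $\sum_{n}C_n/n<\infty$) and dominated convergence yields $\Upsilon_{\hbar}(a)-\Upsilon_{\hbar}(a\chi)=o(1)$. The main technical obstacle is the symbolic step: turning the pointwise infinite-order vanishing of the Moyal expansion on $U$ into a genuine $\O_{\mathscr{S}}(\hbar^\infty)$ estimate for the true Weyl symbol of $A^{k-1}R$ on a (possibly smaller, but fixed) neighbourhood of $\mathcal{C}_\mu$; this is standard in semiclassical analysis but requires careful control of the Moyal remainders uniformly in the relevant neighbourhood.
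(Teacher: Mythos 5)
Your proposal is correct and follows essentially the same route as the paper: the same decomposition $Op_{\hbar}(a)=Op_{\hbar}(a\chi)+Op_{\hbar}(a(1-\chi))$, G\aa rding plus Lemma~\ref{prop:FM} with the first bound of Lemma~\ref{lem:Fredh} for the summable constants $C_n$, the second bound of Lemma~\ref{lem:Fredh} reduced to $\|[\Pi_\hbar,A^{k}R]\|_{J^2}=\O_k(\hbar^\infty)$ via the pseudo-locality of the Moyal product and Lemma~\ref{lem:microloc}, and dominated convergence for the sum. The paper handles the symbolic step you flag by citing the composition and support theorems for the Weyl calculus (\cite[Theorems 4.11 and 4.12]{zworski_semiclassical_2012}), exactly as you anticipate.
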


\begin{proof}
Let $B=Op_{\hbar}(a)$, $A=Op_{\hbar}(\chi a)$, $R = Op_{\hbar}((1-\chi)a) $ and $\Pi=\Pi_\hbar$. By G\aa rding's inequality (\cite{zworski_semiclassical_2012}, Theorem 4.30)\footnote{Under our assumptions, $1-a \ge1-c>0$ so that  $Op_{\hbar}(1-a) = \mathrm{I}-B \ge \gamma \mathrm{I}$ as operator if $\hbar\le \hbar_\gamma$, for any $\gamma<1-c$.}, there exists a fixed $\rho<1$ such that for all $\hbar>0$ sufficiently small,
\[
\|A\|, \|B\| , \|R\| \le \rho.
\]
Moreover, $\| [\Pi,A] \|_{J^2}, \| [\Pi,B] \|_{J^2} \le C$ for a constant $C$ independent of $\hbar$ (by Lemma~\ref{prop:FM}), so using the first estimate from Lemma~\ref{lem:Fredh}, 
$|\Upsilon_{\hbar}^n(a)| \le C n^2 \rho^n$ and these constants are summable.

The, by Lemma~\ref{lem:microloc}, we claim that for any $k\in\N_0$, 
\begin{equation} \label{commest}
\| [\Pi,A^{k}R]\|_{J^2} =  \O_k(\hbar^\infty) . 
\end{equation}
Indeed, $A^kR=Op_{\hbar}(r_k)$ where $r_0 = (1-\chi)a$ and $r_{k,\hbar} = (\chi a)\# r_{k-1,\hbar}$ for $k\in\N$;  see \cite[Theorem 4.11]{zworski_semiclassical_2012}. 
Then, by \cite[Theorem 4.12]{zworski_semiclassical_2012}, $r_{k,\hbar} = r_{k,\hbar}' + \O_k(\hbar^\infty)$ where $\supp(r_{k,\hbar}') = \supp(r_0)$ for $k\in\N$ and since $ \supp(r_0)\cap \mathcal{C}_\mu = \emptyset$, this proves the estimate \eqref{commest}.  

Then, using the second bound from Lemma~\ref{lem:Fredh}, this implies that for any $n\ge 2$, 
\[
|\Upsilon_{\hbar}^n(a) - \Upsilon_{\hbar}^n(a\chi)|  = \O_n(\hbar^\infty) . 
\]
Hence, we also have $\Upsilon_{\hbar}(a) = \Upsilon_{\hbar}(a\chi) + o(1)$ as $\hbar\to0$ by the dominated convergence theorem. 
\end{proof}

Proposition~\ref{prop:cutoff} has the following important consequence; to compute the asymptotics of $\Upsilon_{\hbar}^n(a)$ for a general symbol $a_\hbar \in S^0(\R^2,\C)$, one can assume that $a_\hbar$ is supported in a small neighborhood of the curve $\mathcal{C}_{\mu}$.  
This assumption will be crucial to apply the \emph{quantum version of the action-angle theorem} in the next section. 
We can further replace the operator $Op_{\hbar}(\chi a)$ by a finite rank operator $A$ via a truncation. 

\begin{prop}  \label{prop:Fredh}
Let $a_\hbar \in S^0(\R^2,\C)$ be a symbol such that  $\|a_\hbar\|_{L^{\infty}}\le c <1$.
Let ${\rm Q}= \vartheta(-\hbar^2\Delta+V)$ and $\Pi = \1_{(-\infty,\mu]}(-\hbar^2\Delta+V)$ where $\vartheta\in C^\infty(\R,[0,1])$ is such that $\vartheta \ge \1_{(-\infty,\mu+\delta]}$ for a small $\delta>0$. 
Then, for any cutoff $\chi\in  C^\infty_c(\R^2,[0,1])$ supported on a
small neighbourhood of the curve $\mathcal{C}_{\mu}$ and equal to 1 on
a smaller neighbourhood of this curve, as $\hbar\to0$, 
\[
\Upsilon_{\hbar}(a) =  \log \det(1+Op_{\hbar}(a)\Pi)- \tr(\log(1+Op_{\hbar}(a))\Pi)  = \Upsilon_\Pi(A) + o(1) 
\]
where $A = {\rm Q}Op_{\hbar}(\chi a){\rm Q}$. 
Moreover,  $\Upsilon_{\hbar}^n(a) = \Upsilon_{\Pi}^n(A)+ O_n(\hbar^\infty)$ for any $n\ge 2$.
\end{prop}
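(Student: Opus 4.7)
The plan is to combine Proposition~\ref{prop:cutoff}, which localizes the symbol near $\mathcal{C}_\mu$, with a pseudodifferential approximation replacing $Op_\hbar(\chi a)$ by its compression ${\rm Q}Op_\hbar(\chi a){\rm Q}$, and then to conclude via the perturbative estimate in Lemma~\ref{lem:Fredh}. Set $B:=Op_\hbar(\chi a)$. Since $\chi \equiv 1$ on a neighborhood of $\mathcal{C}_\mu$, the hypotheses of Proposition~\ref{prop:cutoff} are satisfied, so $\Upsilon_\hbar(a) = \Upsilon_\Pi(B) + o(1)$ and $\Upsilon_\hbar^n(a) = \Upsilon_\Pi^n(B) + \O_n(\hbar^\infty)$. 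It thus suffices to compare $\Upsilon_\Pi(B)$ with $\Upsilon_\Pi(A)$, where $A := {\rm Q}B{\rm Q}$.

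The core of the argument is the smallness estimate $R := A - B = \O_{J^1}(\hbar^\infty)$. Writing $R = ({\rm Q}-I)B\,{\rm Q} + B({\rm Q}-I)$ and using $\|{\rm Q}\| \le 1$ (as $0 \le \vartheta \le 1$), this reduces to showing that both $({\rm Q}-I)B$ and $B({\rm Q}-I)$ are $\O_{J^1}(\hbar^\infty)$. By the functional calculus (Theorem~14.9 of \cite{zworski_semiclassical_2012}), ${\rm Q} = Op_\hbar(q_\hbar) + \O_{J^1}(\hbar^\infty)$ with $q_\hbar$ having principal symbol $\vartheta(\xi^2+V(x))$. Since $\vartheta \equiv 1$ on $(-\infty,\mu+\delta]$, the symbol $q_\hbar - 1$ vanishes to all orders in $\hbar$ on a neighborhood of $\mathcal{C}_\mu$; choosing $\chi$ supported in a sufficiently small neighborhood of $\mathcal{C}_\mu$ then yields $\supp(q_\hbar - 1) \cap \supp(\chi a) = \emptyset$. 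The composition formula (Theorems~4.11 and 4.12 of \cite{zworski_semiclassical_2012}) then gives $(q_\hbar - 1)\#(\chi a) = \O_{\mathscr{S}}(\hbar^\infty)$, hence the claim.

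Once $R = \O_{J^1}(\hbar^\infty)$ is in hand, the conclusion follows from Lemma~\ref{lem:Fredh}. The G\aa rding inequality provides a fixed $\rho < 1$ with $\|B\|, \|A\| \le \rho$ for small $\hbar$, while Lemma~\ref{prop:FM} combined with the smallness of $A-B$ yields $\|[\Pi,B]\|_{J^2}, \|[\Pi,A]\|_{J^2} = \O(1)$. Applying the second bound of Lemma~\ref{lem:Fredh} with base operator $B$ and perturbation $R$,
\[
|\Upsilon_\Pi^n(A) - \Upsilon_\Pi^n(B)| \le C\, 2^n \max_{1\le k<n} \|[\Pi,B^{k-1}R]\|_{J^2} \le C'\, 2^n \rho^{n-2} \|R\|_{J^1} = \O_n(\hbar^\infty),
\]
which proves the coefficient-wise statement. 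For the full series $\Upsilon_\Pi = \sum_n \tfrac{(-1)^n}{n}\Upsilon_\Pi^n$, split at $N = \lfloor c\log(1/\hbar)\rfloor$ for a sufficiently small $c>0$: the head contributes $\O(2^N \hbar^\infty) = o(1)$ by the bound above, while the tail contributes $\O(N\rho^N) = o(1)$ thanks to the uniform first bound of Lemma~\ref{lem:Fredh}.

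The main obstacle is the microlocal estimate $R = \O_{J^1}(\hbar^\infty)$: it hinges on a careful tracking of symbol supports in the pseudodifferential calculus for $\vartheta(H)$, namely translating the hypothesis $\vartheta \equiv 1$ on $(-\infty,\mu+\delta]$ into an all-orders support separation between $\vartheta(\xi^2+V) - 1$ and $\chi a$, so that the composition formula propagates this separation through every term in the $\hbar$-expansion. Everything else is a direct application of Proposition~\ref{prop:cutoff} and Lemma~\ref{lem:Fredh} as established earlier in the section.
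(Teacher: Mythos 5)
Your proof is correct and follows essentially the same route as the paper: reduce to $\chi a$ via Proposition~\ref{prop:cutoff}, use the disjointness of $\supp(q_\hbar-1)$ and $\supp(\chi a)$ in the pseudodifferential calculus to absorb the factors of ${\rm Q}$, and conclude with Lemma~\ref{lem:Fredh}. The only (harmless) differences are that you establish the stronger operator estimate ${\rm Q}\,Op_{\hbar}(\chi a)\,{\rm Q}=Op_{\hbar}(\chi a)+\O_{J^1}(\hbar^{\infty})$ and feed it into the perturbative bound of Lemma~\ref{lem:Fredh}, whereas the paper works with the commutator estimate $\|[{\rm Q},Op_{\hbar}(\chi a)]\|_{J^1}=\O(\hbar^{\infty})$ combined with the exact identity $\Pi{\rm Q}={\rm Q}\Pi=\Pi$ to rewrite the traces directly, and that your $\log(1/\hbar)$ splitting of the series replaces the paper's dominated-convergence step; both substitutions are valid.
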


\begin{proof} 
By Proposition~\ref{prop:cutoff}, on can replace the symbol $a$ by $\chi a$ in such a way that  $\supp(a\chi)\cap \mathcal{C}_{\mu\pm\delta} = \emptyset$.
Then, from the proof of Proposition~\ref{lem:microloc} (see \eqref{est1}), $\| [ {\rm Q},  Op_{\hbar}(a)] \|_{J^1} =  \O(\hbar^\infty)$.
This implies that for $n\ge 2$
\[
\Upsilon_\Pi^n(A)  = \tr(\Pi A^n\Pi-(\Pi A\Pi)^n) = \tr(\Pi Op_{\hbar}(a)^n\Pi-(\Pi Op_{\hbar}(a)\Pi)^n) + \O_n(\hbar^\infty)  
\]
where the leading term equals $\Upsilon_\hbar^n(a)$ and we used that $\Pi  {\rm Q} =   {\rm Q}\Pi  = \Pi$. 
Hence, using  again  that $\| Op_{\hbar}(a) \| \le \rho <1$ and $\| [\Pi,Op_{\hbar}(a)] \|_{J^2} \le C$ (Lemma~\ref{prop:FM}), by Lemma~\ref{lem:Fredh} and the dominated convergence Theorem, we conclude that
$\Upsilon_{\hbar}(a)   = \Upsilon_\Pi(A) + o(1) $ as $\hbar\to 0$ as claimed.
\end{proof}

\subsection{Strong Szeg\H{o} asymptotics}
\label{sec:end-proof}

Recall the notation of Section~\ref{sec:strong-szegho-limit} and let $a \in S^0(\R^2,\C)$ be any symbol with $\|a\|_{L^{\infty}}\le c <1$.
By Proposition~\ref{prop:Fredh}, we can also assume that $a$ is supported on a small neighbourhood of the curve $\mathcal{C}_{\mu}$ and it suffices to obtain the asymptotics of $\Upsilon_\Pi(A)$ where $A = {\rm Q}Op_{\hbar}(a){\rm Q}$. 
Computing the traces in question using the Hilbert basis $(\phi_\ell)_{\ell\in \N_0}$ from Proposition~\ref{prop:limit_Toep}, writing $A_{\ell k} = \langle\phi_\ell,Op_{\hbar}(a)\phi_k\rangle$, we obtain for $n\ge 2$, 
\begin{equation} \label{cumulant1}
\Upsilon_\Pi^n(A) = \big(\tr(\Pi A^n\Pi)-\tr(\Pi A\Pi)^n\big)
=  \sum_{j\in\N^{n+1} :  j_0= j_n <N} \omega(j) \,  A_{j_0j_1}A_{j_1j_2} \cdots A_{j_{n-1}j_n}
\end{equation}
where 
\(
\omega(j) = {\textstyle \prod_{k=1}^{n-1}}  \vartheta(\lambda_{j_k})^2(1- \1_{j_k<N})
\).
In particular, this sum is finite since $\vartheta(\lambda) = 0$ for $\lambda >\mu$ sufficiently large.
Here, we used that ${\rm Q} \phi_j = \vartheta(\lambda_j)\phi_j$ and
that all terms such that $ j_k <N$ for all $1\leq k\leq n$ cancel when computing the difference of both traces since $\vartheta(\lambda) = 1$ for $\lambda \le \mu$).

We can make a change of variables 
$ \{j\in\N^{n+1} : j_0= j_n \} \to  \{j_0 \in \N , i\in\Z^{n} : i_1+\cdots+ i_n = 0 \}$ given by 
$i_1=j_1-j_0,i_2=j_2-j_1,\ldots, i_{n-1}=j_0-j_{n-1}$, then the condition 
$\{ \exists 1\leq r \leq n-1 : j_r \ge N \}$ corresponds to 
\begin{equation} \label{conditionm}
\big\{ m_*(i) :=\max(i_1,i_1+i_2,\ldots,i_1+i_2+\ldots+i_{n}) \ge N-j_0 \big\} .
\end{equation}
Observe that if $|A_{jk}| \le \Gamma(k-j)$ for $\lambda_k,\lambda_j \in\supp(\vartheta)$, then we can bound for any $n\ge 2$, 
\[
\sum_{j\in\N^{n+1} :  j_0= j_n <N} \omega(j)  \big| A_{j_0j_1}A_{j_1j_2} \cdots A_{j_{n-1}j_n}\big| 
\le \sum_{i\in\Z^{n} : i_1+\cdots+ i_n = 0 } m_*(i)\Gamma(i_1) \cdots \Gamma(i_n) <\infty
\]
under the assumption that $\Gamma_k = \O(|k|^{-\infty})$ as $|k|\to\infty$. 
In particular, the \emph{cumulants} \eqref{cumulant1} are bounded (uniformly as $\hbar\to0$) in this case.

Then, we can apply the asymptotics of Proposition~\ref{prop:limit_Toep} in \eqref{cumulant1} (according to formula \eqref{Fourierb}, $|\hat{a}_k(I)| =  \O(|k|^{-\infty})$ as $|k|\to\infty$ uniformly for $I \in \R_+$), using the previous bound, 
we obtain 
\begin{equation} \label{cumulant2}
\Upsilon_\Pi^n(A) 
=  \sum_{j\in\N^{n+1} :  j_0= j_n <N} \omega(j) \,  
\hat{a}_{j_1-j_0}(\tfrac{j_1+j_0}2\hbar) \hat{a}_{j_2-j_1}(\tfrac{j_2+j_1}2\hbar) \cdots \hat{a}_{j_n-j_{n-1}}(\tfrac{j_n+j_{n-1}}2\hbar) +\O_n(\hbar) . 
\end{equation}
Moreover, we have $|\hat{a}_k(I)- \hat{a}_k(J)| =  \O\big(|I-J||k|^{-\infty}\big)$ 
as $|k|\to\infty$ uniformly for $I, J \in \R_+$ and also for any $j$ which contributes to \eqref{cumulant1}, 
\[
\big|\tfrac{j_s+j_r}2 - N \big| \le  m_*(i)+m_*(-i) , \qquad
\forall s,r \in [n]. 
\]
This allows to replace the arguments on the RHS of \eqref{cumulant2} by $N \hbar$, or equivalently by $I(\mu) \sim \hbar N$ as $\hbar\to0$  (by Weyl's law), up to a vanishing error as $\hbar\to0$, 
\[\begin{aligned}
\Upsilon_\Pi^n(A) 
& =  \sum_{j\in\N^{n+1} :  j_0= j_n <N} \omega(j) \,  
\hat{a}_{j_1-j_0}(I) \hat{a}_{j_2-j_1}(I) \cdots \hat{a}_{j_n-j_{n-1}}(I) +o_n(1)  \\
& =   \sum_{i\in\Z^{n} : i_1+\cdots+ i_n = 0 } m_*(i) \hat{a}_{i_1}(I) \cdots \hat{a}_{i_n}(I) +o_n(1)  .
\end{aligned}\]
Here, we used that according to \eqref{conditionm}, for every $i\in\Z^{n}$ with $i_1+\cdots+ i_n = 0$ and $m_*(i) \le \delta/\hbar$
\[
\omega(j) = \1\{m_*(i) \ge N-j_0\}
\]
and computed the sum over $j_0$ to obtain the last formula. 

Recall that the coefficients $\Upsilon_\Pi^n(A)$ are bounded uniformly in $\hbar$ in a summable way (by Proposition~\ref{prop:cutoff})
Hence, by the dominated convergence Theorem, this implies that 
\[
\Upsilon_\Pi(A) 
= \sum_{n\ge 2} \tfrac{(-1)^n}{n} \sum_{i\in\Z^{n} : i_1+\cdots+ i_n = 0 } m_*(i) \hat{a}_{i_1}(I) \cdots \hat{a}_{i_n}(I) + \underset{\hbar\to0}{o(1)} . 
\]

At this point, we can  apply the the Dyson-Hunt-Kac formula (\cite{Kac54}, Theorem 4.2),
which states that
\begin{align*}
\sum_{\sigma \in \mathfrak{S}_n} m^*(i_{\sigma(1)},\dots,i_{\sigma(n)})&= \sum_{\sigma\in
\mathfrak{S}_n} i_{\sigma(1)}\sum_{r=1}^n \1(i_{\sigma(1)}+\ldots+i_{\sigma(r)}>0)\\
&=\sum_{r=1}^n\frac{1}{2r}\sum_{\sigma\in
\mathfrak{S}_n}|i_{\sigma(1)}+\ldots+i_{\sigma(r)}|.
\end{align*}

By symmetry, this yields as $\hbar\to0$, 
\[
\Upsilon_\Pi(A) \sim \frac12 \sum_{n> r\ge 1} \frac{(-1)^n}{n \cdot r} \sum_{i\in\Z^{n} : i_1+\cdots+ i_n = 0 } |i_1+\cdots+i_r| \, \hat{a}_{i_1}(I) \cdots \hat{a}_{i_n}(I) .
\]
Grouping together terms with $i_1+\ldots+i_r=\ell$, we obtain
\[
\Upsilon_\Pi(A)  \sim \frac12 \sum_{n > r\ge 1} \frac{(-1)^n}{n \cdot r} \sum_{\ell\in\Z} |\ell| \, (\widehat{a^r})_\ell  (\widehat{a^{n-r}})_{-\ell} 
\]
where we used that $(\widehat{a^r})_\ell = \sum_{i \in \Z^r : i_1+\ldots+i_r=\ell} \hat{a}_{i_1} \cdots \hat{a}_{i_r}$ for $r\in\N$ and $\ell\in\Z$. 

By symmetry, we conclude that as $\hbar\to0$, 
\[ \begin{aligned}
\Upsilon_\Pi(A)  \sim \sum_{r, s \ge 1} \frac{(-1)^{r+s}}{2(r+s) \cdot r}  \sum_{\ell\in\Z} |\ell|(\widehat{a^r})_\ell  (\widehat{a^{s}})_{-\ell}
& = \frac12  \sum_{r, s \ge 1} \frac{(-1)^{r+s}}{s\cdot r} \sum_{\ell\in\Z} |\ell| (\widehat{a^r})_\ell  (\widehat{a^{s}})_{-\ell} \\
& =  \frac12 \sum_{\ell\in\Z} |\ell| \widehat{f}_\ell  \widehat{f}_{-\ell}
\end{aligned}\]
where
$f = -\log(1+a_0) =  \sum_{r\ge 1} \tfrac{(-1)^r}{r} a_0^r$ converges if $\|a_0\|<1$.

By Proposition~\ref{prop:Fredh}, this yields the asymptotics of Proposition~\ref{prop:Szegoasymp} and this completes the proof.

\subsection{Non-compactly supported test functions}

The technical condition $f\in C^{\infty}_c(\R,\R)$ is important for our proof of Theorem~\ref{thr:single_well}, but it can eventually be relaxed using the \emph{exponential decay} of the kernel of the spectral projector $\Pi_\mu$ in the \emph{forbidden region} $\{V>\mu\}$. 
However, the hypothesis  $\|a_\hbar\|_{L^\infty} \le c <1$ in Proposition~\ref{prop:Szegoasymp} is important to obtain the convergence of the Fredholm determinants (or the Laplace transform of the linear statistic $\X(f)$, \eqref{Laplace}). By relaxing the mode of convergence, we easily deduce the following result; 

\begin{prop}\label{prop:clt}
Let $f\in C(\R,\R)$ with at most exponential growth and such that
$f\in C^{\infty}(\{V<\kappa\})$ for some $\kappa>\mu$. Let $\mathcal{N}$
denote the standard Gaussian. Then, one has convergence in distribution
\[
\X(f) - \mathbb{E} \X(f) \, \to\,
\Sigma_{(V,\mu)}(f)\,\mathcal{N}\qquad \qquad \text{ as }\hbar \to 0
\]
and convergence of moments: for every $r\in\N$,
\[
  \mathbb{E}\big[\big( \X(f) - \mathbb{E} \X(f)\big)^r \big] \to
  \Sigma_{(V,\mu)}^r(f)\, \mathbb{E}[\mathcal{N}^r] \qquad \qquad
  \text{ as }\hbar\to0.
  \]
\end{prop}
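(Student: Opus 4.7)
The plan is to reduce to Theorem~\ref{thr:single_well} via a smooth cutoff, controlling the remainder using exponential decay of the spectral projector in the classically forbidden region. Let $\chi\in C^\infty_c(\R,[0,1])$ equal $1$ on a neighbourhood of the droplet $\{V\leq \mu\}$ and be supported in $\{V<\kappa\}$, and decompose $f = f_1 + f_2$ with $f_1 = \chi f\in C^\infty_c(\R,\R)$ and $f_2 = (1-\chi)f$ supported in $\{V > \mu + \delta\}$ for some $\delta>0$. Since the parametrising curve $\psi$ from \eqref{var} takes values in $[x_0^-,x_0^+]$, where $\chi\equiv 1$, we have $\Sigma_{(V,\mu)}(f_1)=\Sigma_{(V,\mu)}(f)$, so Theorem~\ref{thr:single_well} already supplies the target asymptotic for $\X(f_1)$.

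The analytic input is exponential decay of $\Pi_\hbar$ in the forbidden region. Standard Agmon estimates (see~\cite{helffer_spectral_2013}) give $\Pi_\hbar(x,x)\leq C\hbar^{-1} e^{-2d_\mu(x)/\hbar}$ for $x\notin\{V\leq \mu\}$, where $d_\mu$ is the Agmon distance to the droplet, while the polynomial lower bound $V(x)\geq c_0(1+|x|)^m$ from Subsection~\ref{sec:semiclassical-tools} gives $d_\mu(x)\gtrsim |x|^{1+m/2}$ for large $|x|$. Combined with the exponential growth bound $|f(x)|\leq C e^{c|x|}$, this implies, uniformly for $\eta$ in a fixed small complex disk around $0$,
\[
\int |e^{\eta f_2(x)}-1|^2 \Pi_\hbar(x,x)\,\dd x = \underset{\hbar\to 0}{o(1)}, \qquad \int |e^{\eta f_2(x)}-1|\, \Pi_\hbar(x,x)\,\dd x = \underset{\hbar\to 0}{o(1)} .
\]
In particular, $\|(e^{\eta f_2}-1)\Pi_\hbar\|_{J^2}=o(1)$ and $\tr((e^{\eta f_2}-1)\Pi_\hbar)=o(1)$.

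Writing $e^{\eta f}-1 = (e^{\eta f_1}-1) + e^{\eta f_1}(e^{\eta f_2}-1)$, one factors
\[
\det(I+(e^{\eta f}-1)\Pi_\hbar) = \det(I+(e^{\eta f_1}-1)\Pi_\hbar)\cdot\det\bigl(I+(I+A)^{-1}e^{\eta f_1}B\bigr),
\]
with $A=(e^{\eta f_1}-1)\Pi_\hbar$ and $B=(e^{\eta f_2}-1)\Pi_\hbar$. For $|\eta|$ small, $(I+A)^{-1}$ is bounded and $e^{\eta f_1}$ is bounded (since $f_1\in C^\infty_c$), so setting $X=(I+A)^{-1}e^{\eta f_1}B$ one has $\|X\|_{J^2}=o(1)$ and $|\tr X|=o(1)$ by the two integral bounds above; the standard estimate $|\log\det(I+X)-\tr X|\leq C\|X\|_{J^2}^2$ then gives $\log$ of the second factor $=o(1)$. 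Similarly $\tr(f\Pi_\hbar)=\tr(f_1\Pi_\hbar)+o(1)$. Applying Theorem~\ref{thr:single_well} to $f_1$ thus yields, uniformly on a fixed complex disk around $0$,
\[
\log\mathbb{E}\bigl[e^{\eta(\X(f)-\mathbb{E}\X(f))}\bigr] \underset{\hbar\to 0}{\longrightarrow} \tfrac{1}{2}\eta^2 \Sigma_{(V,\mu)}^2(f).
\]

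Convergence in distribution then follows from L\'evy's continuity theorem (set $\eta=it$). Convergence of all centred moments is obtained from Cauchy's integral formula: the left-hand side is analytic in $\eta$ and uniformly bounded on a fixed complex disk, so all its derivatives at $\eta=0$ — the cumulants of $\X(f)-\mathbb{E}\X(f)$ — converge to those of $\Sigma_{(V,\mu)}(f)\mathcal{N}$, and hence so do all centred moments. The main obstacle is the uniform-in-$\eta$ version of the two Agmon integral estimates; this relies on the fact that, with $|\eta|$ fixed small, the exponent $C|\eta|e^{c|x|}-|x|^{1+m/2}/\hbar$ is maximised at $|x|\sim c^{-1}\log(1/\hbar)$, where its value is of order $-(\log(1/\hbar))^{1+m/2}/\hbar\to -\infty$, so the Agmon decay dominates the exponential growth of $f$ as $\hbar\to 0$.
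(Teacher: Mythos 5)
Your overall plan (cut off $f$ outside a neighbourhood of the droplet and control the remainder by Agmon decay of $\Pi_\hbar$ in the forbidden region) is the right one and is the same as the paper's, but the step you yourself flag as ``the main obstacle'' is not just an obstacle --- it is false, and it breaks the whole Laplace-transform route. For $f$ with genuinely exponential growth, $|e^{\eta f_2(x)}-1|$ is \emph{doubly} exponential in $|x|$ (of order $e^{|\eta| C e^{c|x|}}$), while the Agmon bound only gives $\Pi_\hbar(x,x)\lesssim \hbar^{-1}e^{-2d_\mu(x)/\hbar}$ with $d_\mu(x)\asymp |x|^{1+m/2}$. For any \emph{fixed} $\hbar>0$ and $\eta\neq 0$, the exponent $C|\eta|e^{c|x|}-|x|^{1+m/2}/\hbar$ tends to $+\infty$ as $|x|\to\infty$; the point $|x|\sim c^{-1}\log(1/\hbar)$ you identify is an interior critical point near a local \emph{minimum} of the integrand's exponent, not a global maximum. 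Hence $\int |e^{\eta f_2}-1|^2\,\Pi_\hbar(x,x)\,\dd x=+\infty$ (already for the harmonic oscillator with $f(x)=e^{|x|}$), the operator $(e^{\eta f}-1)\Pi_\hbar$ is not even bounded, and $\mathbb{E}[e^{\eta \X(f)}]$ is generically infinite for $\eta\neq 0$. This is precisely why the proposition claims only convergence in distribution and of moments, not of the Laplace transform.

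The fix --- and the paper's actual argument --- is to work at the level of moments rather than the Laplace transform: one shows $\mathbb{E}[\X(f_2)^{2r}]=\O_r(\hbar^\infty)$, which only requires integrating $|f_2|^{2r}\lesssim e^{2rc|x|}$ (a \emph{single} exponential) against the one-point function over dyadic shells $\mathcal{K}_k$, where the Agmon decay $e^{-k/\hbar}$ does dominate $e^{Crk}$ once $\hbar$ is small. From $\mathbb{E}[\X(f)^{r}]=\mathbb{E}[\X(\chi f)^{r}]+\O_r(\hbar^\infty)$ (and $\X(f_2)\to 0$ in $L^2$) one then imports both conclusions from Theorem~\ref{thr:single_well} applied to $\chi f\in C^\infty_c$, using that the Gaussian is determined by its moments. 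Your observations that $\Sigma_{(V,\mu)}(\chi f)=\Sigma_{(V,\mu)}(f)$ and that the cumulants of $\X(\chi f)$ converge via Cauchy's formula are correct and are the parts of your write-up worth keeping; the determinant factorisation and the two ``integral bounds'' must be replaced by the moment estimate.
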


\begin{proof}
Let $\chi \in C^\infty(\R,[0,1])$ be equal to $1$ on $\{V\le \kappa\}$. We claim that under the 
Assumptions of Proposition~\ref{prop:clt}, for any $r\in\N$,
\begin{equation} \label{trunc}
\mathbb{E}[\X(f)^{r}] = \mathbb{E}[\X(f\chi)^{r}]  + \O_r(\hbar^\infty) . 
\end{equation}
Indeed, recall that  $(\phi_j)_{j\in \N}$ is a Hilbert basis of eigenfunctions of $-\hbar^2\Delta+V$ with eigenvalues $(\lambda_j)_{j\in \N}$.
By \cite[Proposition 2.3]{deleporte_universality_2021}, there is a compact set
$\{V\le \mu\} \Subset  \mathcal{K} \Subset \{V\le \kappa\}$ and a constant $c>1$ such that
\[
\max_{\lambda_j\leq \mu}\big\|e^{\dist(x,  \mathcal{K}) /{\hbar}} \phi_j \big\|_{L^2}\leq c . 
\]
So, letting
$\mathcal{K}_k : = \big\{ x\in\R^n : k \le \dist(x,  \mathcal{K}) < k+1 \big\} $
for $k\ge 0$, we have 
\[
\max_{\lambda_j\leq \mu} \int_{\mathcal{K}_{k}^c} \phi_j^2 \le c e^{-k/{\hbar}} . 
\]
Moreover, since $f$ grows at most exponentially, there is a constant $C\ge 0$ so that $|f| \le e^{Ck/2}$ on $\mathcal{K}_k$. 
In particular, if $f=0$ on $\mathcal{K}_0$, then 
\[
\mathbb{E}[\X(f)^{2r}]  \le  \sum_{k \ge 1} e^{Crk} \mathbb{E}[\X(\mathds{1}_{\mathcal{K}_k})] , \qquad 
\mathbb{E}[\X(\mathds{1}_{\mathcal{K}_k})] \le \mathbb{E}[\X(\mathds{1}_{\mathcal{K}_{k-1}^c})] = \int_{\mathcal{K}_{k-1}^c} \hspace{-.3cm}\Pi_\hbar(x,x) \dd x  \le N \max_{\lambda_r \le \mu} \int_{K_{j-1}^c} \phi_r^2
\]
where ${\mathcal{K}_k^c}=\R^n\setminus \mathcal{K}_k$ for $k\ge 0$. 
Since $N \asymp \hbar^{-n}$, this implies that 
\[
\mathbb{E}[\X(f)^{2r}]  \le \O(\hbar^{-n}) \sum_{k \ge 1} e^{Crk-k/\hbar} = \O_r(\hbar^\infty)  
\]
since this geometric sum is exponentially small as $\hbar\to0$. 
This proves \eqref{trunc} since we can choose a cutoff  $\chi =1$ on~$\mathcal{K}_0$. 

Hence, using \eqref{trunc}, it follows immediately from Theorem~\ref{thr:single_well} that in distribution,
\[
\X(f) - \mathbb{E} \X(f) \, \to\,  \Sigma_{(V,\mu)}(\chi f)\,\mathcal{N}
\] 
and all moments converge. Since, $\chi=1$ on  $\{V\le \mu\}$, the
Fourier coefficients of the functions $\chi f$ and $f$ coincide in
formula \eqref{eq:Sigma} so that $\Sigma_{(V,\mu)}(\chi f) =
\Sigma_{(V,\mu)}(f)$ as expected. This concludes the proof.
\end{proof}

\section{Multi-cut case -- Proof of Theorem \ref{thr:multiple_wells}}
\label{sec:several-wells}

The action-angle theorem (Proposition \ref{prop:aa}) has a microlocal
nature, and allows to find quasimodes for any Schrödinger operator
using only a local hypothesis. Thus, given $V:\R\to \R$
confining such that the support of the density of states $\{V\leq \mu\}$ consists of several disjoint (compact) intervals,
each satisfying the geometric requirements of Section
\ref{sec:one-well-case}, the eigenvalues (up to $\mu$) of $-\hbar^2\Delta+V$
are given, up to a small error, by the union of the eigenvalues of
local models $-\hbar^2\Delta+V_j$, where the potential $V_j$ has a
single well; the associated quasimodes will be localised on a single
well. Under a generic condition, the eigenvalues of different wells
are sufficiently separated from each other, so that the quasimodes are very close
to actual eigenfunctions.

\begin{prop}\label{prop:decomposition}(\cite{helffer_multiple_1984}, Lemma 2.3 and Theorem 2.4)
Let $V\in L^1_{\rm loc}(\R,\R)$ and let $\mu>0$. Suppose that, for
some $\epsilon>0$, the set $\{V\leq
\mu+\epsilon\}$ (consisting of disjoint intervals $I_1',\ldots, I_\ell'$) is compact, on which $V$ is $C^{\infty}$. Suppose
also that $\{V\leq \mu\}$ is a finite union of non-empty intervals $I_1,\ldots, I_\ell$ with $I_j\subset I_j'$ for $j=1,\dots,\ell$.

For $j=1,\dots,\ell$, let $V_j\in C^{\infty}(\R,\R)$ be equal to
$V$ on $I_j'$, and greater than $\mu+\epsilon$ on $\R \setminus I_{j'}$. For all $\hbar>0$ and
$1\leq j\leq \ell$, consider the eigenpairs $(\phi_{j,k},\lambda_{j,k})$ of
$-\hbar^2\Delta+V_j$ such that $\lambda_{j,k}\leq
\mu+\epsilon/2$.
\begin{enumerate}
\item There exists $c>0,C>0$ such that
\[
\|(-\hbar^2\Delta-V-\lambda_{j,k})\phi_{j,k}\|_{L^2}\leq
Ce^{-c\hbar^{-1}}.
\]
\item Letting $E={\rm span}(\phi_{j,k} : \lambda_{j,k}\leq
\mu+\epsilon/2 , j=1,\dots,\ell )$, for all $u\in
E^{\perp}$,
\[
\langle u,(-\hbar^2\Delta-V),u\rangle\geq
(\mu+\tfrac{\epsilon}{4})\|u\|^2_{L^2}.
\]
\end{enumerate}
\end{prop}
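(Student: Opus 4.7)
The plan is to establish both items by Agmon-type exponential decay of the single-well eigenfunctions $\phi_{j,k}$, combined in item~(2) with an IMS partition of unity adapted to the wells, following the standard strategy of \cite{helffer_multiple_1984,helffer_multiple_1985}. For item~(1), the key input is exponential decay of each $\phi_{j,k}$ outside $I_j'$: since $V_j \geq \mu + \epsilon$ on $\R \setminus I_j'$ while $\lambda_{j,k} \leq \mu + \epsilon/2$, the effective barrier $(V_j - \lambda_{j,k})_+$ is bounded below by $\epsilon/2$ off $I_j'$. Multiplying the eigenvalue equation by $e^{2d/\hbar}\phi_{j,k}$, where $d$ is the Agmon distance to $I_j$ in the metric $(V_j - \lambda_{j,k})_+\,\dd x^2$, and integrating by parts yields $\|e^{d/\hbar}\phi_{j,k}\|_{L^2} \leq C$, and in particular $\|\phi_{j,k}\|_{L^2(\R \setminus I_j')} = O(e^{-c/\hbar})$. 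Since $\phi_{j,k}$ solves $(-\hbar^2\Delta + V_j - \lambda_{j,k})\phi_{j,k} = 0$ and $V = V_j$ on $I_j'$, one has
\[
(-\hbar^2\Delta + V - \lambda_{j,k})\phi_{j,k} = (V - V_j)\,\1_{\R \setminus I_j'}\,\phi_{j,k},
\]
whose $L^2$-norm is exponentially small, the growth of $V - V_j$ being absorbed by the Agmon weight.

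For item~(2), I would apply an IMS partition of unity: pick smooth $\chi_0, \chi_1, \ldots, \chi_\ell \in C^\infty(\R,[0,1])$ with $\sum_j \chi_j^2 = 1$, where for $j \geq 1$ the function $\chi_j$ is supported in $I_j'$ and equals $1$ on $\{V \leq \mu + \epsilon/2\}\cap I_j'$, while $\chi_0$ is supported in $\{V \geq \mu + \epsilon/2\}$. The IMS localisation formula reads
\[
\langle u, (-\hbar^2\Delta + V)u\rangle = \sum_{j=0}^\ell \langle \chi_j u, (-\hbar^2\Delta + V)\chi_j u\rangle - \hbar^2 \sum_j \|\chi_j' u\|_{L^2}^2.
\]
The $j=0$ term is bounded below by $(\mu + \epsilon/2)\|\chi_0 u\|^2$ directly. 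For $j \geq 1$, on $\supp \chi_j$ one has $V = V_j$, so the corresponding term equals $\langle \chi_j u, H_j \chi_j u\rangle$ with $H_j := -\hbar^2\Delta + V_j$. Using $u \perp E$ together with the Agmon decay of the $\phi_{j,k}$, I claim the approximate orthogonality
\[
\langle \chi_j u, \phi_{j,k}\rangle = -\langle u, (1-\chi_j)\phi_{j,k}\rangle = O(e^{-c/\hbar})\|u\|,
\]
since $(1-\chi_j)\phi_{j,k}$ is supported away from $I_j$. Bessel's inequality combined with the Weyl-type bound $O(\hbar^{-1})$ on the number of quasimodes below $\mu + \epsilon/2$ then yields $\|P_{j,\leq}\chi_j u\|_{L^2} = O(e^{-c'/\hbar})\|u\|$, where $P_{j,\leq}$ is the spectral projection of $H_j$ onto $\{\leq \mu + \epsilon/2\}$. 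The spectral theorem gives $\langle \chi_j u, H_j \chi_j u\rangle \geq (\mu + \epsilon/2)\|\chi_j u\|^2 - O(e^{-c''/\hbar})\|u\|^2$. Summing over $j$ and absorbing the $O(\hbar^2)$ IMS error yields the bound $(\mu + \epsilon/4)\|u\|^2$ for $\hbar$ small enough.

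The main obstacle is the approximate orthogonality step in item~(2): one must upgrade the exact orthogonality $u \perp E$ to an exponentially small projection of each localised piece $\chi_j u$ onto the low-lying spectral subspace of the local Hamiltonian $H_j$. This hinges on a uniform Agmon bound for every quasimode $\phi_{j,k}$ with $\lambda_{j,k} \leq \mu + \epsilon/2$ and on a Weyl-type count of their number; once these are in hand, the rest of the argument is routine bookkeeping around the IMS identity and the exponential tail of $(V-V_j)\phi_{j,k}$.
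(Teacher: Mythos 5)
The paper does not actually prove this proposition: it is quoted verbatim from Helffer--Sj\"ostrand \cite{helffer_multiple_1984} (Lemma 2.3 and Theorem 2.4), whose proof is precisely the Agmon-estimate plus IMS-localisation strategy you follow. Your treatment of item~(1) is correct in the setting where the proposition is actually used (note only that with $V$ merely $L^1_{\rm loc}$ the factor $V-V_j$ could in principle outgrow the Agmon weight at infinity; in the paper $V$ has polynomial growth, so this is harmless), and the overall architecture of item~(2) --- IMS identity, reduction to $H_j$ on $\supp\chi_j$, approximate orthogonality of $\chi_j u$ to the low-lying eigenfunctions of $H_j$, spectral theorem --- is the right one.

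There is, however, one step in item~(2) that fails as written: your choice of cutoff, with $\chi_j\equiv 1$ exactly on $\{V\leq\mu+\epsilon/2\}\cap I_j'=\{V_j\leq\mu+\epsilon/2\}$, uses the \emph{same} threshold $\mu+\epsilon/2$ that defines which eigenpairs enter $E$. For an eigenvalue $\lambda_{j,k}$ at or near $\mu+\epsilon/2$, the classically allowed region $\{V_j\leq\lambda_{j,k}\}$ reaches the boundary of the plateau $\{\chi_j=1\}$, so $\supp(1-\chi_j)$ is at \emph{zero} Agmon distance from it. Near the turning points the eigenfunction is of Airy type and carries mass of order $\hbar^{1/6}$ in an $\hbar^{2/3}$-neighbourhood, so $\|(1-\chi_j)\phi_{j,k}\|_{L^2}$ is only polynomially small; summing over the $\O(\hbar^{-1/3})$ eigenvalues within $\O(\hbar^{2/3})$ of the threshold, the bound on $\|P_{j,\leq}\chi_j u\|$ is not even $o(1)\|u\|$, and the claimed exponential orthogonality breaks down. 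The fix is standard and cheap: take $\chi_j\equiv 1$ on a \emph{fixed, strictly larger} neighbourhood of $\{V_j\leq\mu+\epsilon/2\}$ inside $I_j'$, for instance on $\{V\leq\mu+3\epsilon/4\}\cap I_j'$ (with $\chi_0$ then supported in $\{V>\mu+3\epsilon/4\}$, which still gives the lower bound for the $j=0$ term). Then $\supp(1-\chi_j)$ lies at Agmon distance bounded below, uniformly over all $k$ with $\lambda_{j,k}\leq\mu+\epsilon/2$, from the wells $\{V_j\leq\lambda_{j,k}\}$, your approximate orthogonality becomes genuinely $\O(e^{-c/\hbar})\|u\|$, and the remainder of your argument (Parseval on the range of $P_{j,\leq}$, the Weyl count, absorption of the $\O(\hbar^2)$ IMS error for $\hbar$ small) goes through.
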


Thus, the eigenvalue/eigenfunction asymptotics in the case where $V$ has
several wells can be reduced to the one-well case if the eigenvalues $\{\lambda_{j,k}\leq
\mu+\epsilon/2 \}$ are well-separated. Indeed, under such assumptions, the eigenfunctions $\{\phi_{j,k}\}$ are quasi orthogonal and ultimately one controls the asymptotics of the spectral projector $\Pi_\hbar(\mu)$. Using usual
results of spectral stability for self-adjoint operators
(\cite[Theorem 8.20]{helffer_spectral_2013}, see also footnote \ref{fn:stability}), we obtain the following result.

\begin{prop}\label{prop:separation_evs}
Under the assumptions of Proposition~\ref{prop:decomposition}, let
\[
\Pi_{\hbar;j}=\1_{(-\infty,\mu]}(-\hbar^2\Delta+V_j) , \qquad 1\leq
j\leq \ell . 
\]
Suppose that the  eigenvalues are not exponentially close to each
other or to $\mu$: there is a small $\epsilon>0$ so that for all $\hbar$ sufficiently small, 
\begin{enumerate}
\item for every $1\le j\neq k \le \ell$, $[\mu-\epsilon,\mu+\epsilon]\cap {\rm Sp}(-\hbar^2\Delta+V_j)$
lies at distance greater than $e^{-\epsilon\hbar^{-1}}$ from
$[\mu-\epsilon,\mu+\epsilon]\cap {\rm Sp}(-\hbar^2\Delta+V_k)$.  
\item for all $1\leq k\leq \ell$, $[\mu-e^{-\epsilon\hbar^{-1}},\mu+e^{-\epsilon\hbar^{-1}}]\cap {\rm Sp}(-\hbar^2\Delta+V_k) = \emptyset$. 
\end{enumerate}
Then, the projector
\begin{equation} \label{spec_decomp}
\Pi_{\hbar}=\sum_{j=1}^\ell\Pi_{\hbar;j}+\O_{J^1}(\hbar^{\infty}),
\end{equation}
and, for any symbol $a\in S^0(\R^2,\R)$ with $\|a\|_{L^\infty}<1$,
\[
\log \det(I+Op_{\hbar}(a)\Pi_{\hbar})=\sum_{j=1}^\ell\log
\det(I+Op_{\hbar}(a)\Pi_{\hbar;j})+\O(\hbar^{\infty}).
\]
\end{prop}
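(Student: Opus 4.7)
The plan is to establish the $J^1$ decomposition \eqref{spec_decomp} first, by combining Proposition~\ref{prop:decomposition} with a quasimode-to-mode argument, and then to deduce the Fredholm determinant factorization by block-diagonalisation once exact mutually orthogonal projectors are in hand.

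I would first show the quasi-orthogonality $\Pi_{\hbar;j}\Pi_{\hbar;k} = \O_{J^1}(\hbar^\infty)$ for $j\neq k$: by Agmon-type estimates (see \cite[Proposition~2.3]{deleporte_universality_2021}), each eigenfunction of $-\hbar^2\Delta+V_j$ with eigenvalue at most $\mu+\epsilon/2$ is exponentially localised in $I_j'$, so the individual inner products of eigenfunctions across different wells are $\O(e^{-c/\hbar})$, and a routine rank-times-norm bound upgrades this to a trace-norm estimate. Next, Proposition~\ref{prop:decomposition}(1) states that each $\phi_{j,k}$ with $\lambda_{j,k}\leq\mu+\epsilon/2$ is an $\O(e^{-c/\hbar})$-quasimode of $H=-\hbar^2\Delta+V$. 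The spectral separation hypothesis (1) and the standard quasimode-to-mode principle (footnote~\ref{fn:stability}) then produce an actual eigenfunction $\widetilde\phi_{j,k}$ of $H$ with eigenvalue $\widetilde\lambda_{j,k}=\lambda_{j,k}+\O(e^{-c/\hbar})$ and $\|\widetilde\phi_{j,k}-\phi_{j,k}\|_{L^2}=\O(e^{-c/\hbar})$. Hypothesis (2) ensures that $\widetilde\lambda_{j,k}\leq\mu$ if and only if $\lambda_{j,k}\leq\mu$, while Proposition~\ref{prop:decomposition}(2) guarantees that no eigenfunction of $H$ with eigenvalue below $\mu$ is missed. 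Letting $\widetilde P_j$ denote the exact orthogonal projector onto the span of $\{\widetilde\phi_{j,k}:\lambda_{j,k}\leq\mu\}$, one obtains $\Pi_\hbar=\sum_j\widetilde P_j$ exactly, together with $\widetilde P_j=\Pi_{\hbar;j}+\O_{J^1}(\hbar^\infty)$, which is \eqref{spec_decomp}.

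For the determinant, I would use the identity $\det(I+B\Pi)=\det(I+\Pi B\Pi)$ valid for any projector $\Pi$ (via the equality $\tr((B\Pi)^n)=\tr((\Pi B\Pi)^n)$ from $\Pi^2=\Pi$), together with the cross-term estimate $\Pi_{\hbar;j}Op_{\hbar}(a)\Pi_{\hbar;k}=\O_{J^1}(\hbar^\infty)$ for $j\neq k$. The latter follows by inserting cutoffs $\chi_j$ supported near $I_j'$ (valid modulo $\O_{J^1}(\hbar^\infty)$ by Agmon decay) and then invoking pseudo-locality of Weyl quantisation to bound $\chi_j Op_{\hbar}(a)\chi_k$ in operator norm by $\O(\hbar^\infty)$, upgraded to trace norm using the rank $\O(\hbar^{-1})$ of the projectors. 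Setting $A=Op_{\hbar}(a)$, $P=\widetilde P_1$ and $Q=\sum_{j\geq 2}\widetilde P_j$, the cross terms $PAQ$ and $QAP$ are then $\O_{J^1}(\hbar^\infty)$ and $PQ=0$, so
\[
\log\det(I+A\Pi_\hbar)=\log\det(I+PAP+QAQ)+\O(\hbar^\infty)=\log\det(I+A\widetilde P_1)+\log\det(I+AQ)+\O(\hbar^\infty),
\]
using the factorisation $(I+PAP)(I+QAQ)=I+PAP+QAQ$ which follows from $PQ=QP=0$. Iterating on $Q$ and replacing each $\widetilde P_j$ by $\Pi_{\hbar;j}$ via $J^1$-continuity of $\log\det$ then yields the stated formula.

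The main obstacle is the quasimode-to-mode step, for which the separation hypothesis (1) is essential: if two quasi-eigenvalues from different wells were allowed to become exponentially close, the corresponding eigenfunctions of $H$ could hybridize and the spectral-stability bound of footnote~\ref{fn:stability} would degenerate, breaking the unambiguous assignment of each $\widetilde\phi_{j,k}$ to a single well. The genericity condition in Theorem~\ref{thr:multiple_wells} is precisely what guarantees that, for almost every $w$ and along the polynomially-slow sequence $(\hbar_m)$, hypotheses (1)--(2) hold so that the present proposition applies.
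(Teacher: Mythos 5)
Your determinant step is sound and essentially equivalent to the paper's (which instead writes $Op_{\hbar}(a)\Pi_\hbar=A+R$ with $\|R\|_{J^1}=\O(\hbar^\infty)$ and uses approximate commutation of the $\Pi_{\hbar;j}$); the real issue is in your construction of the exact projectors $\widetilde P_j$. You apply the quasimode-to-mode step to \emph{every} $\lambda_{j,k}\le\mu$, but the separation hypothesis (1) only controls the spectra of the $-\hbar^2\Delta+V_j$ inside the window $[\mu-\epsilon,\mu+\epsilon]$. Below $\mu-\epsilon$ nothing prevents eigenvalues attached to different wells from being exponentially close or resonant (cf.\ the symmetric double well), in which case the corresponding eigenfunctions of $H=-\hbar^2\Delta+V$ hybridise across wells: there is then no individual eigenfunction $\widetilde\phi_{j,k}$ of $H$ close to $\phi_{j,k}$, the spaces you use to define $\widetilde P_j$ are not well defined, and the exact identity $\Pi_\hbar=\sum_j\widetilde P_j$ does not make sense as stated. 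The paper avoids this entirely by first splitting $\Pi_\hbar=\chi(H_\hbar)+(1-\chi)\1_{(-\infty,\mu]}(H_\hbar)$ with $\chi$ supported below $\mu$ and disposing of the low-energy block by the smooth functional calculus ($\chi(H_\hbar)$ and $\sum_j\chi(H_{\hbar;j})$ are pseudodifferential operators with the same full symbol, hence agree modulo $\O_{J^1}(\hbar^\infty)$ with no separation hypothesis needed there), confining the eigenvalue-by-eigenvalue analysis to the window where hypothesis (1) applies. Your argument can be repaired either by adopting that step or by working with cluster (Riesz) projectors onto groups of nearby eigenvalues rather than individual eigenfunctions, but as written it fails on the deep spectrum.

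A second, more minor point concerns the direction of the stability argument. To apply footnote~\ref{fn:stability} with $H_0=H$ and quasimode $\phi_{j,k}$ you must already know that ${\rm Sp}(H)$ near $\lambda_{j,k}$ consists of a single simple eigenvalue with a gap much larger than the quasimode error $e^{-c\hbar^{-1}}$ — but that gap structure is part of what is being proved. One first needs a counting argument (min--max, using Proposition~\ref{prop:decomposition}(2) together with the near-orthogonality of the $\phi_{j,k}$ across wells) to pin down ${\rm Sp}(H)$ in the window, and one needs $\epsilon$ small compared with the Agmon constant $c$ so that the assumed separation $e^{-\epsilon\hbar^{-1}}$ dominates $e^{-c\hbar^{-1}}$; you mention the counting input only after the fact. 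The paper sidesteps this chicken-and-egg problem by arguing in the opposite direction: it cuts an actual eigenfunction $u_\hbar$ of $H$ with cutoffs $\chi_j$, producing quasimodes of the one-well operators $H_{\hbar;j}$, whose spectra are already known (Remark~\ref{rk:eigsep}) to be simple and separated by about $\hbar$, and only then uses hypothesis (1) to show that exactly one well carries the mass of $u_\hbar$.
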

\begin{proof}
First, we  can isolate $[\mu- \frac\epsilon2,\mu+\frac\epsilon2]$ from the
rest of the spectrum. Let $\chi\in C^{\infty}(\R,[0,1])$ be equal to
$1$ on a neighbourhood of $(-\infty,\mu-\frac\epsilon4]$ and equal to
$0$ on a neighbourhood of $[\mu,+\infty)$. Decompose
\[
\Pi_{\hbar}=\chi(H_{\hbar})+(1-\chi)\1_{(-\infty,\mu]}(H_{\hbar})
\]
and decompose similarly $\Pi_{\hbar;j}$ for $1\leq j\leq \ell$. By
the smooth functional calculus of pseudo-differential operators
(\cite{dimassi_spectral_1999}, Theorem 8.7),
\[
\chi(H_{\hbar})=\sum_{1\leq j \leq \ell}\chi(H_{\hbar;j})+\O_{J^1}(\hbar^{\infty})
\]
since both sides are pseudo-differential operators with the same
symbol at any order.

It remains to study the spectrum in $[\mu- \frac\epsilon2,\mu+\frac\epsilon2]$, where we use the hypothesis of separation of eigenvalues.
For every $1\leq j\leq \ell$, let $\chi_j\in C^{\infty}_c(\R,[0,1])$
 with $\1_{I_j} \le \chi_j \le \1_{I_j'}$ (see Proposition~\ref{prop:decomposition}). 
By Agmon estimates \cite{simon_semiclassical_1984}, there is a $c>0$
so that if $u_{\hbar}$ is a (normalised) eigenfunction of $H_{\hbar}$
with energy $\lambda\in [\mu- \frac\epsilon2,\mu+\frac\epsilon2]$, for
every $1\leq j\leq\ell$, $\chi_ju_{\hbar}$ is an $O(e^{-c\hbar^{-1}})$-quasimode for both
$H_{\hbar}$ and $H_{\hbar;j}$ at energy $\lambda$ and
$\sum_{j\leq \ell}\|\chi_ju_{\hbar}\|_{L^2}=1-O(e^{-c\hbar^{-1}})$.
Moreover, by Remark~\ref{rk:eigsep}, for every $1\leq j\leq \ell$, the eigenvalues of $H_{\hbar;j}$ are simple and separated by about $\hbar$.
 Hence, by spectral stability (\cite[Theorem 8.20]{helffer_spectral_2013}, see also footnote~\ref{fn:stability}), if $\|\chi_ju_{\hbar}\|_{L^2}\geq \tfrac{1}{2\ell}$, there is an eigenvalue $\lambda_{j,k}$ of $H_{\hbar;j}$ such that  
$|\lambda- \lambda_{j,k}| =\O(e^{-c\hbar^{-1}/2})$. 
 Using the separation hypothesis~1. with $\epsilon<c/2$, we conclude
 that there is exactly
 one $1\leq j\leq \ell$ such that $\|\chi_ju_{\hbar}\|_{L^2}\geq \tfrac{1}{2\ell}$, 
 and that $\chi_ju_{\hbar} = \phi_{j,k}+ O(e^{-c\hbar^{-1}/2})$, where
 $(\phi_{j,k})_{k\in\N}$ denotes a collection of normalised
 eigenfunctions of $H_{j,\hbar}$ with increasing eigenvalues $\lambda_{j,k}$. This has the following consequences;
\vspace{-.2cm}
\begin{itemize}[leftmargin=*,itemsep=-.3em]
\item[(a)] The spectrum of $H_{\hbar}$ in $[\mu-
  \frac\epsilon2,\mu+\frac\epsilon2]$ is simple and its eigenvalues
  are separated by about $e^{-\epsilon\hbar^{-1}}$. 
\item[(b)] For every eigenpair $(u_{\hbar},\lambda)$ of $H_{\hbar}$, there is a unique $(j,k)$
such that $|\lambda- \lambda_{j,k}| =\O(e^{-c\hbar^{-1}/2})$ and $u_{\hbar} = \phi_{j,k}+ O(e^{-c\hbar^{-1}/2})$.  
\end{itemize}
\vspace{-.2cm}
On the other-hand, by Proposition~\ref{prop:decomposition}, the $\phi_{j,k}$ are also $O(e^{-c\hbar^{-1}})$-quasimodes of $H_{\hbar}$ with energy $\lambda_{j,k}$ so that using that the eigenvalues of $H_{\hbar}$ are separated (a), there is a 1-1 correspondence between $\operatorname{spec}(H_{\hbar})\cap [\mu- \frac\epsilon2,\mu+\frac\epsilon2]$ and the relevant part of $\bigcup_{j=1}^\ell \operatorname{spec}(H_{\hbar; j})$ as in (b).

Now, (a) and the separation hypothesis~1. with $\epsilon<c/2$ also guarantees that 
 $\operatorname{spec}(H_{\hbar})\cap [\mu- \frac\epsilon2,\mu]= \bigcup_{j=1}^\ell \operatorname{spec}(H_{\hbar; j})\cap [\mu- \frac\epsilon2,\mu]$ and using (b), we obtain the spectral decomposition; 
 \[
(1-\chi)\1_{(-\infty,\mu]}(H_{\hbar})=\sum_{j=1}^{\ell}(1-\chi)\1_{(-\infty,\mu]}(H_{\hbar;j})+\O_{J^1}(Ne^{-c/2\hbar}),
\]
which concludes this part of the proof.

To prove closeness of the $\log$-determinants, we write $A=Op_{\hbar}(a) \sum_{j=1}^\ell
\Pi_{\hbar,j}$, $B=Op_{\hbar}(a)\Pi_{\hbar}=A+R$, so that
\[
\det(I+B)=\det(I+A)\cdot\det(I+R(1+A)^{-1})
\]
and $\|R(1+A)^{-1}\|_{J^1}=\O(\hbar^{\infty})$ since $\|R\|_{J^1}=\O(\hbar^{\infty})$ and $(1+A)^{-1}$ is a 
bounded (by assumption $\|a\|_{L^\infty}<1$ and by G\aa rding's inequality $\|Op_{\hbar}(a)\| <1$).  
By continuity of $\log\det$ with respect to the $J^1$ norm, we conclude that
\[
\log\det(I+B)=\log\det(I+A) + \O(\hbar^{\infty}). 
\]
Now, as the projections satisfy
$[\Pi_{\hbar;j},\Pi_{\hbar;j'}]= \O(\hbar^\infty)$ for all $1\leq
j,j'\leq \ell$, using the Baker-Campbell-Haussdorf formula \cite{dynkin_representation_1949}, 
\[
\log\det(I+A) =  \sum_{j=1}^{\ell} \log\det(I+Op_{\hbar}(a)\Pi_{\hbar;j}) + \O(\hbar^{\infty})
\]
as claimed.
\end{proof}

We are now ready to complete the proof of Theorem
\ref{thr:multiple_wells} by a Borel-Cantelli argument.

As in the hypothesis of Theorem \ref{thr:multiple_wells}, let $(\chi_k)_{1\leq k\leq \ell}$ be a family of $C^\infty_c(\R,\R_+)$ functions with disjoint supports such that $\chi_k=1$ on $I_k$ and consider the Schr\"odinger  operator
\begin{equation}\label{We}
-\hbar^2_N\Delta+W^w , \qquad W^w:= V+{\textstyle \sum_{j=1}^\ell w_j\chi_j}
\end{equation}
for $w\in [-\epsilon,\epsilon]^\ell$ where $\epsilon>0$ is a small parameter. 
In particular, for almost every $w\in [-\epsilon,\epsilon]^\ell$, the set $\{W^w \le \mu\}$  still consists of $\ell$ disjoint (compact) intervals, denoted $I_1^w,\ldots, I_\ell^w$ and $\nabla W^w= \nabla V \neq 0$ on $\{W^w = \mu\}$. 

Replacing $V$ by $W^w$ amounts to replacing $V_j$ with $W^w_j=V_j+w_j$ (so that the eigenvalues
$\lambda_{k;j}$ change into  $\lambda_{k;j}+w_j$ for $1\leq j\leq \ell$) in Propositions~\ref{prop:decomposition} and~\ref{prop:separation_evs}. 
Then, observe that for $\hbar>0$, the measure of the set 
\[\big\{ w\in [-\epsilon,\epsilon]^\ell : \exists (k,j) , (k',j') \text{ with } j\neq j' \text{ and } |\lambda_{k;j}-\lambda_{k';j'}+w_j-w_j'|<e^{-c/2\hbar}
\text{ or }   |\lambda_{k;j}+w_j-\mu|<e^{-c/2\hbar}\big\}\] is $\O(\hbar^{-2}e^{-c/2 \hbar})$ where the factor $\O(\hbar^{-2})$ accounts for all possible choices of $k,k'$. 
Hence, for any sequence $\hbar_m \to 0$ as $m\to\infty$ sufficiently fast so that 
$\sum_m \hbar_m^{-2}e^{-c/2\hbar_m}<\infty$ (this is plainly the case if for some $\alpha>0$, $\hbar_m \leq Cm^{-\alpha}$), by the Borel-Cantelli Lemma, there is a full measure set $E \subset  [-\epsilon,\epsilon]^\ell$ such that for $w\in E$, if $m$ is sufficiently large, the operator \eqref{We} satisfies both assumptions of Proposition \ref{prop:separation_evs}. 
Hence, applying this Proposition with $\Pi_{m}=\1_{(-\infty,\mu]}(-\hbar^2_m\Delta+W^w)$ and 
$\Pi_{m,j} =\1_{(-\infty,\mu]}(-\hbar^2_m\Delta+W^w_j)$ for $1\leq
j\leq \ell$,  we conclude that for any symbol $a\in S^0(\R^2,\R)$ with $\|a\|_{L^\infty}<1$, as $m\to\infty$, 
\[\begin{aligned}
\log \det(I+Op_{\hbar}(a)\Pi_{m}) 
&= \sum_{j=1}^{\ell} \log\det(I+Op_{\hbar}(a)\Pi_{m;j})  + o(1) \\
& = \sum_{j=1}^{\ell} \bigg( \tr(\log(1+Op_{\hbar}(a))\Pi_{m;j})
+ \frac{1}2  \sum_{\ell\in\Z} |\ell| \widehat{f}_\ell(I_j)  \widehat{f}_{-\ell}(I_j)\bigg)+o(1)
\end{aligned}\]
where we applied the (one-cut) asymptotics of Proposition~\ref{prop:Szegoasymp} at the second step, $f = -\log(1+a)$ and $I_j = \g^{-1}(\mu-w_j)$ according to the notation of Section~\ref{sec:notations}. 
For a given $f\in C^{\infty}_c(\R,\R)$, choosing the symbol $a= e^{\eta f}-1$  which is in $S^0(\R^2,\C)$ with $\|a\|_{L^\infty}<1$ if the parameter $\eta\in\C$ is sufficiently small, this completes the proof of Theorem \ref{thr:multiple_wells}.
Here, we have $\tr(f\Pi_{m}) =  \sum_{j=1}^\ell \tr(f\Pi_{m;j}) + o(1)$
(by Proposition \ref{prop:separation_evs}) and the variance equals
$\sum_{j=1}^{\ell} \Sigma_{(W^w_j,\mu)}^2(f)$. This computes the proof.

\appendix
\section{Gaussian Free field (GFF) interpretation}\label{sec:GFF}

We provide a functional interpretation of central limit Theorem~\ref{thr:single_well} in terms of the Gaussian free field. 
This interpretation of Szeg\H{o}-type limit theorems is classical in random matrix theory, pioneered in \cite{HKO01} for the circular unitary ensemble (Example~\ref{ex:cue} -- we refer~e.g.~\cite{CFLW21} for recent developments), this proceeds by considering the counting function
\[
h_N :  x\mapsto \X(\1_{(-\infty,x]}) . 
\]
In the physics literature, the asymptotics of the correlation kernel of $h_N$ were recently obtained using  WKB asymptotics of the Schr\"odinger operator eigenfunctions and the result is interpreted in terms of the GFF, though Gaussian fluctuations were not established in this paper. 

The connection with Theorem~\ref{thr:single_well} comes from the fact that for a Schwartz function $f\in \mathcal{S}$, 
\[
\X(f) = - \int f'(x) h_N(x) \dd x 
\]
so that, viewed as a random Schwartz distribution, $\widetilde{h_N} : = \sqrt 2\pi\big( h_N  - \mathbb{E} h_N\big)$ converges weakly as $N\to\infty$ (equivalently  $\hbar\to 0$) to a (centred) Gaussian random field $h \in \mathcal{S}'$ with covariance kernel; 
\begin{equation} \label{corrkernel}
 \widetilde{\mathcal H} : (x,z) \mapsto  \big(\log\big|\sin \tfrac{\theta(x)+\vartheta(z)}{2}\big| - \log\big|\sin \tfrac{\theta(x)-\vartheta(z)}{2}\big|\Big) ,\qquad  x,z \in [x_0^-,x_0^+]
\end{equation}
where recall that $x_0^{\pm}(\mu)$ are such that $\{V<\mu\} =(x_0^-,x_0^+)$, $T=T(\mu)$ is the period of the flow \eqref{Hflow} with $(x_0,\xi_0) = (x_0^+,0)$ and the map 
\begin{equation} \label{theta}
\theta(x) =  \frac{\pi}{T} \int_x^{x_0^+} \frac{\dd u}{\sqrt{\mu-V(u)}} , \qquad x\in [x_0^-,x_0^+] . 
\end{equation}
Formula \eqref{corrkernel} is (up to normalization) \cite[Formula (16)]{Smith_21} and it has the following interpretation. 

Let $\xi$ be the GFF on $\mathbb{U} = \{z\in\C : |z| =1\}$, that is, the restriction of a 2d GFF on the unit circle. Suitably normalized, $\xi$ is a log-correlated field with covariance kernel
\[
\mathbb E\big[\xi(z)\xi(w)\big] = \log|1-z\overline{w}|^{-1} , \qquad z, w\in \mathbb{U} . 
\]
Then, \eqref{corrkernel} is the kernel of the pushforward of the GFF by the map \eqref{theta} in the following sense; 
\[
 \widetilde{\mathcal H} (x,z) = \mathbb E h(x)h(z) , \qquad h(x) = \frac{\xi(e^{\i\theta(x)})-\xi(e^{-\i\theta(x)})}{\sqrt 2} . 
\]
Note that the normalization of $\widetilde{h_N}$ is similar to \cite{CFLW21}  (GUE case) and the limit $h$ is a standard log-correlated field. 

\smallskip

In the remainder of this section, we explain how to obtain \eqref{corrkernel} from formula \eqref{var} and the consideration from Section~\ref{sec:notations}. 
Let $\Sigma = \Sigma_{(V,\mu)}^2$, by Devinatz formula \cite[Proposition 6.1.10]{simon_orthogonal_2005}, 
\begin{equation} \label{vardev}
\begin{aligned}
\Sigma(f) &= \frac12
\iint_{[-\pi,\pi]^2} \bigg| \frac{ f(\psi(\theta))- f(\psi(\vartheta))}{e^{\i\theta}-e^{\i\vartheta}} \bigg|^2
\frac{d\theta}{2\pi}\frac{d\vartheta}{2\pi} \\
& = \iint_{[0,\pi]^2} \big( f(\psi(\theta))- f(\psi(\vartheta))\big)^2\bigg(\frac1{|1-e^{\i(\theta-\vartheta)}|^2}+\frac1{|1-e^{\i(\theta+\vartheta)}|^2}\bigg)
\frac{d\theta}{2\pi}\frac{d\vartheta}{2\pi} .
\end{aligned}
\end{equation}
Here, we used that the curve $\mathcal{C}_\mu$ is symmetric with respect to the axis $\{\xi=0\}$ and
the map $\psi(\theta) = x_{\frac{\theta T}{2\pi}}$ in terms of the flow \eqref{Hflow} so that 
$\psi(2\pi-\theta) =\psi(\theta)$ for $\theta\in[0,\pi]$. 
We can invert the map $\psi : (0,\pi) \to (x_0^-,x_0^+)$ using \eqref{Hflow}  to compute its derivative; 
\[
\psi'(\theta) = \tfrac{T}{\pi} \xi_{\frac{\theta T}{2\pi}} = \tfrac{T}{\pi}\sqrt{\mu- V(\psi(\theta))} , \qquad \theta\in(0,\pi) . 
\]
In particular, the period of the flow is $T(\mu) = \int_{x_0^-}^{x_0^+} \frac{d x}{\sqrt{\mu-V(x)}} $ and the inverse map is given by \eqref{theta} using that $\theta'(x) = 1/{\psi'(\theta(x))}$ for  $x\in(x_0^-,x_0^+)$. 
The kernel on the RHS of \eqref{vardev} is 
\[\begin{aligned}
\mathcal{K} : (\theta,\vartheta) \mapsto & \frac1{|1-e^{\i(\theta-\vartheta)}|^2}+\frac1{|1-e^{\i(\theta+\vartheta)}|^2} = \frac{1}{2}\frac{2-\cos(\theta-\vartheta)-\cos(\theta+\vartheta)}{(1-\cos(\theta-\vartheta))(1-\cos(\theta+\vartheta))}\\
& = \frac{1-\cos\theta \cos\vartheta}{(\cos\theta - \cos\vartheta)^2} . 
\end{aligned}\] 
Hence, by a change of variables, we obtain an equivalent formula for the variance
\[\begin{aligned}
\Sigma(f) 
& = \frac{1}{(2\pi T)^2}\iint_{\{V<\mu\}^2} \ \big( f(x)- f(z)\big)^2
\frac{1-\cos\theta(x) \cos\theta(z)}{(\cos\theta(x) - \cos\theta(z))^2}
\frac{\dd x}{\sqrt{\mu-V(x)}}\frac{\dd z}{\sqrt{\mu-V(z)}} . 
\end{aligned}\]

To obtain \eqref{corrkernel}, we observe that 
\[
\mathcal{K} : (\theta,\vartheta) \mapsto -\partial_\theta \partial_\vartheta \mathcal{H}(\theta,\vartheta) ,
\qquad
\mathcal{H} : (\theta,\vartheta) \mapsto  \big(\log\big|\sin \tfrac{\theta+\vartheta}{2}\big| - \log\big|\sin \tfrac{\theta-\vartheta}{2}\big|\Big)
\]
and that 
\(
 \big[\partial_\vartheta \mathcal{H}(\theta,\vartheta)\big]_{\theta =0}^\pi =0 , 
 \)
\(
 \big[\mathcal{H}(\vartheta,\theta)\big]_{\theta =0}^\pi =0 .
 \)
Hence, integrating by parts twice, 
\[\begin{aligned}
\Sigma(f) 
&= -  \iint_{[0,\pi]^2} \partial_\theta \partial_\vartheta \big( f(\psi(\theta))- f(\psi(\vartheta))\big)^2 \mathcal{H}(\theta,\vartheta)
\frac{d\theta}{2\pi}\frac{d\vartheta}{2\pi} \\
&= 2 \iint_{[0,\pi]^2}  f'(\psi(\theta))f'(\psi(\vartheta)) \mathcal{H}(\theta,\vartheta) \frac{d\psi(\theta)}{2\pi}\frac{d\psi(\vartheta)}{2\pi}  \\
& = \frac{1}{2\pi^2} \iint_{\{V<\mu\}^2} f'(x) f'(z) \widetilde{\mathcal H}(x,z) dx dz 
\end{aligned}\]
with the kernel $\widetilde{\mathcal H}$ as in \eqref{corrkernel}. 
Hence, according to Theorem~\ref{thr:single_well}, we conclude that for a $f\in \mathcal{S}$, it holds in distribution
\[
\sqrt{2}\pi \int f'(x) h_N(x) \dd x  \, \to\, \int f'(x) h(x) \dd x . 
\]

\bigskip
\noindent
{\bf Acknowledgement.}
We thank the authors of \cite{Smith_21} for pointing out their work on the counting function of 1d Fermi gas in the one-cut case and the connection with our Theorem~\ref{thr:single_well}.

%\bibliographystyle{abbrv}
%\bibliography{main}
\end{document}